\numberwithin{equation}{section}
\newtheorem{theorem}{Theorem}[section]
\newtheorem{corollary}[theorem]{Corollary}
\newtheorem{lemma}[theorem]{Lemma}
\newtheorem{proposition}[theorem]{Proposition}
\newtheorem{rem}[theorem]{Remark}
\newtheorem{Example}[theorem]{Example}
\newcommand{\U}{\mathcal{U}}
\newcommand{\Ent}{\textbf{Ent}}
\newcommand{\G}{\mathbb{G}}
\newcommand{\R}{\mathbb{R}}
\newcommand{\Z}{\mathbb{Z}}
\newcommand{\E}{\mathbb{E}}
\newcommand{\Ind}{\textbf{1}}
\date{ }
\begin{document}

\label{begindoc} 
\title{\Large{From $U$-bounds to Isoperimetry} \\ \large{with applications to H-type groups}
\thanks{
{Supported by EPSRC 
EP/D05379X/1}} 
 }

\author{ J. Inglis , V. Kontis , 
B. Zegarli{\'n}ski $^\ddag$  \footnote{
On leave of absence from Imperial College London. 
}  \\
{\small{Imperial College London}}\\
{\small{$^\ddag$CNRS, Toulouse}}\\
}

\maketitle

{\small{
\noindent\textbf{Abstract}: 
{\em In this paper we study applications of $U$-bounds to coercive and isoperimetric
problems for probability measures on finite and infinite products of H-type groups.} \\
\textbf{Keywords}: {\em 
$U$-bounds, $L_1\Phi$-entropy bounds, isoperimetric (functional) inequalities, H-type groups, infinite dimensional applications.}
}}\\

\tableofcontents

\section{Introduction}\label{Introduction}

An effective technology to study coercive inequalities involving (sub-) gradients and a variety of probability measures on metric measure spaces was recently introduced in \cite{H-Z}.   This approach was
based on so-called $U$-bounds, that is estimates of the following form
\[ 
\int |f|^qU(d)^{\gamma_q} d\mu \leq C_q
\int|\nabla f |^q d\mu + D_q \int |f |^q d\mu.
\]
Here $q\in [1,\infty)$, $d$ is a metric associated to the (sub-) gradient $\nabla$, $\gamma_q, C_q, D_q\in(0,\infty)$ are constants independent of the function $f$, and $d\mu \equiv e^{-U(d)} d\lambda$ is a probability measure, where $U(d)$ is a function that is bounded from below and has suitable growth at infinity, and $d\lambda$ is a natural underlying measure.
While the consequences of the bounds corresponding to $q>1$ were extensively explored there, the limiting case was left open. In this paper we show that there is a natural direct way from $U$-bounds with $q=1$ to isoperimetric information. In fact we show an essential equivalence of such a bound with an
$L_1\Phi$-entropy inequality
\[
\mathbf{Ent}_\mu^\Phi( f ) \leq c\ \mu|\nabla f|
\]
where
\[\mathbf{Ent}_\mu^\Phi( f ) \equiv \mu \Phi( f ) - \Phi( \mu f )
\] 
is defined with a suitable Orlicz function $\Phi$, as well as the equivalence with an
isoperimetric inequality with a suitable profile function.
We first recall an  interesting result of \cite{Led1} showing that in case of the Gaussian measures on Euclidean spaces, the functions $f$ such that $\mu|f|<\infty$ belong to the Orlicz space defined by a function $\Phi(s) = s \left(\log(1+s)\right)^\frac12$. Also, on the level of isoperimetry for probability measures, we would like to recall a comprehensive characterisation of isoperimetric profiles for measures on the real line obtained in \cite{bobkovhoudre} (see also \cite{Barthe-Cattiaux-Roberto,B-Z, CGGR, M} and references therein) as well as the isoperimetric functional inequalities studied in \cite{bobkovifi}, (\cite{BBBC, BL, B-Z, Z}).  These results provided additional motivation to our work. In particular, in \cite{B-Z}
the authors conjecture that for super-Gaussian distributions one should expect an analog of the isoperimetric functional inequality ($IFI_2$) introduced in \cite{bobkovifi}, with a suitable non-Gaussian isoperimetric function and a different than Euclidean length of the gradient. In \cite{BBBC} (an alternative to \cite{HQL}) the authors gave a proof of the $p=1$ (sub-) gradient bound 
\[|\nabla P_tf|^p\leq C_p(t) P_t |\nabla f|^p\]
for the heat kernel on the Heisenberg group, and as a consequence obtained an $IFI_2$ inequality in this case.  We mention that, for $p>1,$ gradient bounds were earlier established in \cite{DM}, while the logarithmic Sobolev inequality for heat kernels on Heisenberg-type groups was established in \cite{H-Z}.   

The other interesting question is what are the optimal equivalent conditions, on the one side characterising the properties of the semigroup
for which the form associated to the generator is given by the square of a fixed sub-gradient, and  
on the other side characterising the isoperimetric properties (e.g. in the form of some isoperimetric functional inequality with a given length of the
sub-gradient). In the particular situation when $p=1$ gradient bounds are known,
and an equivalence relation (between $IFI_2$ and the logarithmic Sobolev inequality) was established in \cite{F}. It seems that we are still away from fully understanding the peculiarity of this situation and in particular answering the question what kind of additional conditions are necessary to establish equivalence between conditions of different orders in the length of the gradient (as well as finding a more direct proof of this equivalence without going through the semigroup route). 

From the point of view of applications to an infinite dimensional probabilistic setup involving an infinite product of non-compact Lie groups, it is important that we are dealing with inequalities satisfying
the tensorisation property.  Then one can attack the interesting question of for which non-product measures
one can prove similar properties. This question, when the underlying space is as we wish,
appears to have some new challenging features and so far, besides the results of \cite{I-P} where logarithmic Sobolev inequalities $LS_q$, $q>1$, are shown for some classes of measures, not much is known.
Therefore in the present paper we also contribute to this topic by proving tight $L_1\Phi$-entropy inequalities for suitable infinite dimensional Gibbs measures. 

The organisation of our paper is as follows. In section 2, starting from $U$-bounds, we prove the $L_1\Phi$-entropy inequality via a route involving ``dressing up'' the classical Sobolev inequality
and a tightening procedure using a generalised Rothaus type lemma of \cite{L-Z}, extended relative entropy
bounds of \cite{F-R-Z}, and the following Cheeger type inequality 
\[ 
\mu |f-\mu f| \leq c_0 \mu |\nabla f|.
\]
In fact, this type of Cheeger inequality is shown (in Theorem \ref{cheeger from U-bound}) to be a simple consequence of a similar inequality in balls together with
$U$-bounds, provided the function $U$ grows to infinity with the size of the ball.

In section 3 we discuss some applications to isoperimetric and functional isoperimetric inequalities.  Section 4 contains some consequences of the $L_1\Phi$ -entropy inequality. In particular this includes the $LS_q$ inequality and $U$-bounds.  In Theorem \ref{summary} we summarise all interrelations between the properties discussed before. 
Section 5 is devoted to applications of the theory developed in the previous sections to the important
class of H-type groups, where one can check the $U$-bounds for probability measures with density (essentially) dependent on the Carnot-Carath\'eodory distance. The interesting outcome, which comes out naturally within the presented approach, includes a proof of the $p=1$ subgradient bounds for heat kernels
on H-type groups which could  potentially be extended to more complicated non-compact groups.
Finally in section 6 we prove the $L_1\Phi$-entropy inequalities for non-product probability measures 
on an infinite product of H-type groups, which allows us in particular to obtain some new isoperimetric information. Additionally we prove here the $IFI_2$ inequality in such a setup; in fact even when we are using the full gradient, this provides an interesting extension of results in \cite{Z} allowing us to include the important case of unbounded interactions.

\bigskip

\section{$L_1\Phi$-entropy inequalities from $U$-bounds}
\label{L_1 entropy inequalities from U-bounds} 
Throughout this paper we will be working in $\mathbb{R}^N$ equipped with a metric $d: \R^N\times \R^N \to [0, \infty)$ and Lebesgue measure $d\lambda$.  For $r\geq0$, we will set 
\[
B(r) := \{x:d(x)\leq r\},
\]
 where $d(x) := d(x,0)$.
 
We will also let $\nabla$ be a general sub-gradient in $\mathbb{R}^N$ i.e. $\nabla$ is a finite collection $\{X_1, \dots, X_m\}$ of possibly non-commuting fields.  Assume that  the divergence of each of these fields with respect to the Lebesgue measure $\lambda$ on $\mathbb{R}^N$ is zero.  Set $\Delta := \sum_{i=1}^m X^2_i$ and $|\nabla f| = \left(\sum_{i=1}^m (X_if)^2\right)^\frac{1}{2}$. 

\begin{theorem}\label{thm1.1}
\label{defective beta LS}
Let $U$ be a locally Lipschitz function on $\R^N$, which is bounded from below and is such that $Z=\int e^{-U}d\lambda <\infty$.  Let $d\mu = \frac{e^{-U}}{Z}d\lambda$, so that $\mu$ is a probability measure on $\R^N$.  Suppose that the following classical Sobolev inequality is satisfied
\begin{equation}
\label{class sob}
\left(\int |f|^{1+\varepsilon}d\lambda\right)^\frac{1}{1+\varepsilon} \leq a\int |\nabla f|d\lambda + b\int |f|d\lambda
\end{equation}
for some constants $a,b\in[0, \infty)$ and $\varepsilon>0$, and that for some $A, B \in [0, \infty)$ we have
\begin{equation} \label{U-bound}
\mu\left(|f|\left(|U|^\beta + |\nabla U|\right)\right)\leq A\mu|\nabla f| + B\mu|f|
\end{equation}
for some $\beta\in(0,1]$.  Then there exist constants $C, D\in[0, \infty)$ such that
\begin{equation}
\label{defective}
\mu\left(|f|\left|\log\frac{|f|}{\mu|f|}\right|^\beta\right) \leq C\mu|\nabla f| + D\mu|f|.
\end{equation}
\end{theorem}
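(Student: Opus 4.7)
By the degree-one homogeneity of both sides under $f\mapsto \alpha f$, I may normalize $\mu|f|=1$ so that the desired estimate reduces to $\mu(|f|\,|\log|f||^\beta)\leq C\mu|\nabla f|+D$. On the region $\{|f|<1\}$ the pointwise bound $x|\log x|^\beta \leq e^{-\beta}\beta^\beta\leq 1/e$ handles everything, so the task is to control $\mu(|f|\log_+^\beta|f|)$ by a multiple of $\mu|\nabla f|$ plus a constant. After shifting $U$ by an additive constant (which only rescales $Z$), I may also assume $U\geq 0$.

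The central step is a dressing of Sobolev: apply the classical inequality (\ref{class sob}) to $g:=|f|e^{-U}$. Since $|\nabla g|\leq e^{-U}|\nabla f|+|f|e^{-U}|\nabla U|$ and $e^{-U}\,d\lambda=Z\,d\mu$, one has $\|\nabla g\|_{L^1(\lambda)}\leq Z\mu|\nabla f|+Z\mu(|f||\nabla U|)$ and $\|g\|_{L^1(\lambda)}=Z\mu|f|$. Invoking (\ref{U-bound}) to absorb $\mu(|f||\nabla U|)$ then delivers the key intermediate estimate
\[
\|g\|_{L^{1+\varepsilon}(\lambda)}\leq C_1\mu|\nabla f|+D_1\mu|f|.
\]

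To pull out $|\log|^\beta$, I split $\{|f|\geq 1\}=\{1\leq|f|\leq e^U\}\cup\{|f|>e^U\}$. On the first set $\log|f|\leq U$, hence $\log_+^\beta|f|\leq |U|^\beta$, and (\ref{U-bound}) gives $\mu(|f|\log_+^\beta|f|\,\Ind_{|f|\leq e^U})\leq \mu(|f||U|^\beta)\leq A\mu|\nabla f|+B\mu|f|$. On the second, $\log|f|=\log g+U$ with $g>1$, and sub-additivity $(a+b)^\beta\leq a^\beta+b^\beta$ (valid for $\beta\in(0,1]$, $a,b\geq 0$) reduces matters to one further $\mu(|f||U|^\beta)$ contribution (treated as before) plus the quantity $Z^{-1}\int_{g>1}g(\log g)^\beta\,d\lambda$, which must still be controlled linearly by $\mu|\nabla f|$ and $\mu|f|$.

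This last bound is the principal technical obstacle: the dressed Sobolev inequality above yields only a $(1+\varepsilon)$-power of the linear quantity $C_1\mu|\nabla f|+D_1\mu|f|$, whereas the final conclusion must be genuinely linear. I handle this through Hölder interpolation: use $(\log s)^\beta\leq C_\eta s^{\eta\beta}$ for $s\geq 1$ and arbitrarily small $\eta>0$, reducing the task to controlling $\int g^{1+\eta\beta}\,d\lambda$; then interpolate between $L^1(\lambda)$ and $L^{1+\varepsilon}(\lambda)$ to obtain $\int g^{1+\eta\beta}\,d\lambda\leq \|g\|_1^{1-\theta}\|g\|_{1+\varepsilon}^{(1+\varepsilon)\theta}$ with $\theta=\eta\beta/\varepsilon$. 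Choosing $\eta$ small enough that $(1+\varepsilon)\theta\leq 1$, the elementary bound $x^\gamma\leq 1+x$ for $\gamma\in[0,1]$ and $x\geq 0$ converts the resulting power of the intermediate Sobolev-norm estimate into a genuinely linear expression in $\mu|\nabla f|$ and $\mu|f|$, and assembling the pieces gives the claimed defective $L_1\Phi$-entropy inequality.
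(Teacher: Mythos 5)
Your argument is correct and shares the paper's overall skeleton --- normalise $\mu|f|=1$ and $U\ge 0$, ``dress up'' the classical Sobolev inequality \eqref{class sob} with the weight $e^{-U}$ and use \eqref{U-bound} to absorb $\mu(|f||\nabla U|)$ so that $\Vert fe^{-U}\Vert_{L^{1+\varepsilon}(\lambda)}$ is controlled linearly by $\mu|\nabla f|+\mu|f|$, and split $\{|f|\ge 1\}$ at the level $e^{U}$ so that the remaining $U^{\beta}$ contributions are again absorbed by \eqref{U-bound} --- but it diverges at the one genuinely delicate point, namely how to convert a $(1+\varepsilon)$-norm bound into a bound that is \emph{linear} in $\mu|\nabla f|$. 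The paper does this by applying Jensen's inequality for the concave function $(\log x)^{\beta}$ with respect to the normalised measure $F\,d\lambda/\int_{\{F\ge1\}}F\,d\lambda$ (with $F=fe^{-U}$), followed by $x^{\beta}\le x+1$ and $\log x\le x-1$, which collapses the logarithm of a ratio of integrals into the ratio itself and leaves exactly one power of $\bigl(\int F^{1+\varepsilon}d\lambda\bigr)^{1/(1+\varepsilon)}$. You instead use $(\log s)^{\beta}\le \eta^{-\beta}s^{\eta\beta}$ for $s\ge1$, H\"older interpolation of $\int g^{1+\eta\beta}d\lambda$ between $L^{1}(\lambda)$ and $L^{1+\varepsilon}(\lambda)$ with exponent $\theta=\eta\beta/\varepsilon$, and $x^{\gamma}\le 1+x$ for $\gamma\in[0,1]$. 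Both devices exploit the normalisation $\mu|f|=1$ in the same essential way: for you it makes $\Vert g\Vert_{L^{1}(\lambda)}^{1-\theta}=Z^{1-\theta}$ a constant, while in the paper it controls the denominator $\int_{\{F\ge1\}}F\,d\lambda\le Z$. Your route is arguably more elementary and makes transparent that only the $\varepsilon$ supplied by \eqref{class sob} is needed (you shrink the auxiliary parameter $\eta$, not $\varepsilon$), at the cost of introducing that extra parameter; the paper's Jensen step is slicker but delivers essentially the same intermediate estimate. One trivial slip: $\sup_{x\in(0,1)}x(\log(1/x))^{\beta}=e^{-\beta}\beta^{\beta}$ tends to $1$ as $\beta\downarrow 0$, so it is not $\le 1/e$ in general; any finite bound (e.g.\ $\le 1$) suffices there, so nothing in the argument is affected.
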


\begin{proof}
Without loss of generality, we may suppose that $f\geq0$ and $U\geq0$.  Indeed, otherwise we may apply \eqref{defective} to the positive and negative parts of $f$ separately.  Moreover, if  $U\geq -K$, with $K\geq0$, we have that $U + K \geq 0$ and then we can replace $f$ by $f e^{-K}$ in \eqref{defective}.  

First note that
\begin{align*}
\mu\left(f\left|\log \frac{f}{\mu f}\right|^\beta\right) &= \mu\left( f\left[\log_+\frac{f}{\mu f}\right]^\beta\right) + \mu\left(f\left[\log\frac{\mu f}{f}\right]^\beta\Ind_{\{f\leq\mu f\}}\right) \\
&\leq \mu\left( f\left[\log_+\frac{f}{\mu f}\right]^\beta\right) + e^{-\beta}\beta^\beta \mu(f),
\end{align*}
since $\sup_{x\in(0,1)}x\left(\log\frac{1}{x}\right)^\beta = e^{-\beta}\beta^\beta$.  Thus it suffices to prove that
\begin{equation}
\label{defective1}
\mu\left(f\left[\log_+\frac{f}{\mu f}\right]^\beta\right) \leq C\mu|\nabla f| + D \mu (f).
\end{equation}
with some constants $C, D\in(0,\infty)$ independent of $f$.
Suppose that $\mu(f) =1$.  With $F\equiv fe^{-U}$ and $\varepsilon\in(0,1)$ sufficiently small, we have
\begin{equation}
\label{RHS}
\int F\left[\log_+\left(F\right)\right]^\beta d\lambda  = \int_{\{F \geq 1\}}F \left[\frac{1}{\varepsilon}\log\left(F\right)^\varepsilon\right]^\beta d\lambda.
\end{equation}
Now, by Jensen's inequality (since, for $\beta\in(0,1]$, the function $\left(\log x\right)^\beta$ is concave on $x\geq1$)
\begin{align*}
 \int_{\{F \geq 1\}} F\left[\frac{1}{\varepsilon}\log\left(F\right)^\varepsilon\right]^\beta d\lambda  
& = \frac{\int_{\{F \geq 1\}}Fd\lambda}{\varepsilon^\beta}\int_{\{F \geq 1\}}\frac{F}{\int_{\{F \geq 1\}}F d\lambda}\left[\log\left(F\right)^\varepsilon\right]^\beta d\lambda\\
& \leq \frac{\int_{\{F \geq 1\}}Fd\lambda}{\varepsilon^\beta}\left[\log\frac{ \int_{\{F \geq 1\}}\left(F\right)^{1+\varepsilon}d\lambda}{\int_{\{F \geq 1\}}F d\lambda}\right]^\beta\\
&= \frac{(1+\varepsilon)^\beta\int_{\{F \geq 1\}}Fd\lambda}{\varepsilon^\beta}\left[\log\frac{ \left(\int_{\{F \geq 1\}}\left(F\right)^{1+\varepsilon}d\lambda\right)^\frac{1}{1+\varepsilon}}{\left(\int_{\{F \geq 1\}}F d\lambda\right)^\frac{1}{1+\varepsilon}}\right]^\beta\\
& \leq  \frac{(1+\varepsilon)^\beta\int_{\{F \geq 1\}}Fd\lambda}{\varepsilon^\beta}\left[\log\frac{ \left(\int_{\{F \geq 1\}}\left(F\right)^{1+\varepsilon}d\lambda\right)^\frac{1}{1+\varepsilon}}{\left(\int_{\{F \geq 1\}}F d\lambda\right)^\frac{1}{1+\varepsilon}} + 1\right],
\end{align*}
using the simple fact that $x^\beta \leq x + 1$ for all $x\geq0$.  Thus, since $\log x \leq x-1$ for all $x\geq0$,
\begin{align*}
\int_{\{F \geq 1\}}F\left[\frac{1}{\varepsilon}\log\left(F\right)^\varepsilon\right]^\beta d\lambda  
\leq \frac{(1+\varepsilon)^\beta\int_{\{F \geq 1\}}Fd\lambda}{\varepsilon^\beta}\left[\frac{ \left(\int_{\{F \geq 1\}}\left(F\right)^{1+\varepsilon}d\lambda\right)^\frac{1}{1+\varepsilon}}{\left(\int_{\{F \geq 1\}}F d\lambda\right)^\frac{1}{1+\varepsilon}}  \right].
\end{align*}
Since we have assumed that $\mu(f) =1$, we have $\int_{\{fe^{-U} \geq 1\}}fe^{-U}d\lambda/Z\equiv \int_{\{F \geq 1\}}Fd\lambda/Z \leq 1$, and so
\begin{align*}
\frac{1}{\left(\int_{\{F \geq 1\}}F d\lambda\right)^\frac{1}{1+\varepsilon}} &\leq \frac{Z^\frac{\varepsilon}{1+\varepsilon}}{\int_{\{F \geq 1\}}F d\lambda}.
\end{align*}
Thus
\begin{align*}
\int_{\{F \geq 1\}} F\left[\frac{1}{\varepsilon}\log\left(F\right)^\varepsilon\right]^\beta d\lambda 
& \leq\frac{(1+\varepsilon)^\beta\int_{\{F \geq 1\}}Fd\lambda}{\varepsilon^\beta}\left[Z^\frac{\varepsilon}{1+\varepsilon}\frac{ \left(\int_{\{F \geq 1\}}\left(F\right)^{1+\varepsilon}d\lambda\right)^\frac{1}{1+\varepsilon}}{\int_{\{F \geq 1\}}F d\lambda}  \right]\\
&= \frac{(1+\varepsilon)^\beta Z^\frac{\varepsilon}{1+\varepsilon}}{\varepsilon^\beta} \left(\int \left(F\right)^{1+\varepsilon}d\lambda\right)^\frac{1}{1+\varepsilon} \\ 
& \leq \frac{(1+\varepsilon)^\beta Z^\frac{\varepsilon}{1+\varepsilon}}{\varepsilon^\beta}\left(a \int |\nabla (F)| d\lambda + bZ\right), \\
\end{align*}
provided $\varepsilon>0$ is  chosen sufficiently small so that in the last step we can apply the classical Sobolev inequality \eqref{class sob}.  Dividing both sides by the normalisation factor $Z$ and recalling $F\equiv f e^{-U}$,  this implies
\begin{equation}
\label{RHS2}
\int f \left[\log_+\left(f e^{-U}\right)\right]^\beta d\mu \leq c_1 \mu |\nabla f| + c_2\mu(f|\nabla U|) + c_3,
\end{equation}
with $d\mu\equiv \frac{1}{Z}e^{-U}d\lambda$ and $c_1 = c_2 =  (1+\varepsilon)^\beta a Z^\frac{\varepsilon}{1+\varepsilon}/\varepsilon^\beta$, $c_3 = (1+\varepsilon)^\beta bZ^\frac{\varepsilon}{1+\varepsilon}/\varepsilon^\beta$. 
Now consider the left-hand side of \eqref{RHS2}.  Since $\beta \in (0,1]$ and $U\geq0$, we have
\begin{align*}
\int f  \left[\log_+\left(f e^{-U}\right)\right]^\beta d\mu &=  \int_{\{f\geq e^U\}}f  \left(\log f - U\right)^{\beta}d\mu \\
&\geq  \int_{\{f \geq e^U\}}f  \left(\log f\right)^\beta d\mu -  \int_{\{f\geq e^U\}}fU^{\beta} d\mu\\
& = \mu\left( f \left[\log_+ f\right]^\beta\right) -  \int_{\{ 1\leq f \leq e^U\}}f  \left(\log f\right)^\beta d\mu 
  -  \int_{\{f\geq e^U\}}f U^{\beta} d\mu\\
&\geq \mu\left( f \left[\log_+ f\right]^\beta\right) -  \int_{\{ 1\leq f  \}} f U^{\beta} d\mu .
\end{align*}
Combining this with \eqref{RHS2} we see that
\begin{align*}
\mu\left( f \left[\log_+ f\right]^\beta\right) 
 &\leq c_1 \mu |\nabla f| + c_2\mu(f|\nabla U|) + c_3  +  \int_{\{ 1\leq f \}}fU^\beta   d\mu \\
 & \leq  c_1 \mu |\nabla f| + \max\{c_2, 1\}\mu\left(f\left(U^\beta + |\nabla U|\right)\right) + c_3 \\
 & \leq \left(c_1 + \max\{c_2, 1\}A\right)\mu |\nabla f| + c_3 + \max\{c_2, 1\}B,
 \end{align*}
where we have used \eqref{U-bound} in the last step.  
Finally, for general $f\geq 0$, we apply the above inequality to $f/\mu(f)$ to arrive at \eqref{defective1}.
 \end{proof}
  
 As a corollary, we can also state the following perturbation result.
  
 \begin{corollary}\label{cor1.2}
 \label{perturbation1}
 Let $U$ and $\mu$ be as in Theorem \ref{defective beta LS}, and suppose conditions \eqref{class sob} and \eqref{U-bound} are satisfied.  Let $W$ be a locally Lipschitz function such that $\int e^{-W}d\mu<\infty$ and
\begin{equation}
\label{potential condition}
|\nabla W| \leq  \delta\left(|U|^\beta + |\nabla U|\right)  + C(\delta), \qquad |W|^\beta \leq a_0 \left(|U|^\beta + |\nabla U|\right) + a_1
\end{equation}
almost everywhere, with some $0<\delta < \frac{1}{A}$ and $C(\delta), a_0, a_1 \in(0, \infty)$.
Then there exist constants $\tilde{C}$ and $\tilde{D}$ such that
\begin{equation}
\tilde\mu\left(|f|\left|\log\frac{|f|}{\tilde\mu|f|}\right|^\beta\right) \leq \tilde{C}\tilde\mu|\nabla f| + \tilde{D}\tilde\mu|f|,
\end{equation}
where $\tilde\mu$ is the probability measure on $\R^N$ given by $\tilde\mu(d\lambda) := e^{-W}\mu(d\lambda)/Z_{\tilde\mu}$, with $Z_{\tilde\mu}\equiv \mu (e^{-W})$.
\end{corollary}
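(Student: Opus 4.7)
The strategy is to verify that $\tilde\mu = e^{-\tilde V}/\tilde Z\,d\lambda$ with $\tilde V := U+W$ and $\tilde Z := ZZ_{\tilde\mu}$ satisfies the hypotheses of Theorem \ref{defective beta LS}, so that the conclusion of that theorem applied to $\tilde\mu$ is precisely the claim. The classical Sobolev inequality \eqref{class sob} is a statement about Lebesgue measure and hence is unaffected by the perturbation, so the only nontrivial task is to derive a $U$-bound of the form \eqref{U-bound} for the pair $(\tilde\mu,\tilde V)$.

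The heart of the argument is to apply \eqref{U-bound} to the test function $|f|e^{-W}$. Using the pointwise estimate
\[
|\nabla(|f|e^{-W})| \leq e^{-W}\bigl(|\nabla f| + |f||\nabla W|\bigr)
\]
and dividing through by $Z_{\tilde\mu} = \mu(e^{-W})$, I obtain
\[
\tilde\mu\bigl(|f|(|U|^\beta + |\nabla U|)\bigr) \leq A\,\tilde\mu|\nabla f| + A\,\tilde\mu(|f||\nabla W|) + B\,\tilde\mu|f|.
\]
Inserting the first bound in \eqref{potential condition} into the $|\nabla W|$ term produces a contribution $A\delta\,\tilde\mu(|f|(|U|^\beta + |\nabla U|))$ on the right, which can be absorbed into the left-hand side thanks to the hypothesis $A\delta < 1$. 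The outcome is a preliminary bound
\[
\tilde\mu\bigl(|f|(|U|^\beta + |\nabla U|)\bigr) \leq A_1\tilde\mu|\nabla f| + B_1\tilde\mu|f|
\]
with explicit constants $A_1,B_1$ depending on $A$, $B$, $C(\delta)$, $\delta$.

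To upgrade this to a $U$-bound involving $\tilde V$ itself, I would combine the subadditivity $|\tilde V|^\beta \leq |U|^\beta + |W|^\beta$ (valid for $\beta\in(0,1]$) with both conditions in \eqref{potential condition} to obtain the pointwise estimate
\[
|\tilde V|^\beta + |\nabla\tilde V| \leq (1 + a_0 + \delta)\bigl(|U|^\beta + |\nabla U|\bigr) + \bigl(a_1 + C(\delta)\bigr).
\]
Integrating against $|f|\,d\tilde\mu$ and invoking the preliminary bound above then delivers the required $U$-bound for $(\tilde\mu,\tilde V)$, after which Theorem \ref{defective beta LS} applied to $\tilde\mu$ yields the claimed inequality.

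The main technical point is the absorption step in the second paragraph, which depends decisively on the quantitative smallness hypothesis $\delta < 1/A$; this is what makes the perturbation genuine rather than cosmetic. A minor subtlety is that the stated hypotheses do not force $\tilde V$ to be bounded below globally (the constraint is on $|W|^\beta$, not on $W$), but inspection of the proof of Theorem \ref{defective beta LS} shows that its reduction to $U\geq 0$ can be replaced, at the cost only of a harmless multiplicative constant, by working with $|\tilde V|$ throughout, so this does not impede the argument.
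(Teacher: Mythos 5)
Your argument is correct, and its first half --- applying \eqref{U-bound} to the test function $|f|e^{-W}$, bounding $|\nabla W|$ via \eqref{potential condition}, and absorbing the resulting $A\delta\,\tilde\mu\bigl(|f|(|U|^\beta+|\nabla U|)\bigr)$ term using $\delta<1/A$ --- is exactly the paper's derivation of its inequality \eqref{perturbed U bound}, with the same constants. You diverge in the second half. The paper does \emph{not} re-apply Theorem \ref{defective beta LS} to the new potential $U+W$; instead it substitutes $fe^{-W}$ directly into the \emph{conclusion} \eqref{defective} for the original measure $\mu$, obtaining
$\tilde\mu\bigl(f\bigl|\log\tfrac{fe^{-W}}{\tilde\mu(f)Z_{\tilde\mu}}\bigr|^\beta\bigr)\le C\tilde\mu|\nabla f|+C\tilde\mu(f|\nabla W|)+D\tilde\mu(f)$,
and then peels off $|W|^\beta+|\log Z_{\tilde\mu}|^\beta$ by subadditivity of $x\mapsto x^\beta$, controlling the $f|W|^\beta$ term through the second condition in \eqref{potential condition} together with \eqref{perturbed U bound}. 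Your route --- verify all hypotheses of Theorem \ref{defective beta LS} for the pair $(\tilde\mu,\,U+W)$ and invoke the theorem afresh --- is conceptually cleaner and makes the stability statement transparent, but it forces you to confront the fact that $U+W$ need not be bounded below, whereas the theorem as stated assumes this. Your proposed repair (rerunning the proof with $|U|^\beta$ throughout) does go through: for instance on $\{fe^{-U}\ge1\}$ one has $[\log_+(fe^{-U})]^\beta\ge[\log_+f]^\beta-|U|^\beta$ regardless of the sign of $U$, and the stray term $\int_{\{fe^{-U}<1,\,f\ge1\}}f[\log_+f]^\beta\,d\mu$ is again dominated by $\int f|U|^\beta\,d\mu$, so only a factor of $2$ is lost. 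Still, this is an extra verification that the paper's transfer-of-conclusion argument avoids entirely, since it only ever uses Theorem \ref{defective beta LS} for the original, genuinely lower-bounded $U$.
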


\begin{proof}
Take $f\geq0$.  Since by assumption \eqref{U-bound} holds, we can apply it to the function $f e^{-W}$.  This yields
\begin{align*}
\tilde{\mu}\left(f \left(|U|^\beta + |\nabla U|\right)\right) & \leq A\tilde{\mu}|\nabla f| + A\tilde{\mu}\left(f|\nabla W|\right) + B\tilde{\mu}(f)\\
&\leq A\tilde{\mu}|\nabla f| + \delta A\tilde{\mu}\left(f\left(|U|^\beta + |\nabla U|\right)\right) + (B+AC(\delta))\tilde{\mu}(f)
\end{align*}
using \eqref{potential condition}.  Thus, since $\delta A<1$, we have that
\begin{equation}
\label{perturbed U bound}
\tilde{\mu}\left(f \left(|U|^\beta + |\nabla U|\right)\right) \leq \tilde{A}\tilde{\mu}|\nabla f| +\tilde{B}\tilde{\mu}(f)
\end{equation}
for $\tilde{A} = A/(1-\delta A), \tilde{B} = (B+AC(\delta))/(1-\delta A)$.  Replacing $f$ by $f e^{-W}$ in \eqref{defective}
of   Theorem \ref{defective beta LS}, we get
\[
\tilde{\mu}\left(f\left|\log\frac{f e^{-W}}{\tilde{\mu}(f)Z_{\tilde{\mu}}}\right|^\beta\right) \leq C\tilde{\mu}|\nabla f| + C\tilde{\mu}\left(f|\nabla W|\right) + D \tilde{\mu}(f).
\]
Using this together with \eqref{perturbed U bound}, yields
\begin{align*}
&\tilde{\mu}\left(f\left|\log\frac{f}{\tilde{\mu}(f)}\right|^\beta\right) \leq C\tilde{\mu}|\nabla f| + \tilde{\mu}\left(f\left(|W|^\beta + C|\nabla W|\right)\right) + \left(D + |\log Z_{\tilde{\mu}}|^\beta\right)\tilde{\mu}(f)\\
&\quad \leq C\tilde{\mu}|\nabla f| + a_0\max\{1,C\}\tilde{\mu}\left(f\left(|U|^\beta + |\nabla U|\right)\right) + \left(a_1+D + |\log Z_{\tilde{\mu}}|^\beta\right)\tilde{\mu}(f)\\
& \quad \leq \tilde{C}\tilde{\mu}|\nabla f| + \tilde{D}\tilde{\mu}(f),
\end{align*}
where $\tilde{C} = C + a_0\max\{1, C\}\tilde{A}$ and $\tilde{D} = a_1 + D +  |\log Z_{\tilde{\mu}}|^\beta + a_0\max\{1,C\}\tilde{B}$.  The inequality for general $f$ follows in similar way by applying the above inequality to the positive and negative parts of $f$ separately.
\end{proof}
  
The resulting inequality in Theorem \ref{defective beta LS} is a defective inequality, in the sense that it contains a term involving $\mu|f|$ on the right-hand side.  For our purposes this type of inequality is not strong enough, and therefore we now aim to prove a tightened inequality of the following form
\begin{equation}
\label{beta-LS}
\mathbf{Ent}_\mu^\Phi(|f|):=\mu\left(\Phi(|f|)\right) - \Phi(\mu |f|) \leq c\mu|\nabla f|,
\end{equation}
where $\Phi(x) = x\left(\log(1+x)\right)^\beta, \beta\in(0,1]$, and $c\in(0,\infty)$ is a constant  independent of $f$.  We accomplish this in the situation
(see Theorem \ref{tighten} below) when we have the following Cheeger type inequality  
$$
\mu|f-\mu f|\leq c_0 \mu |\nabla f|
$$
with a constant $c_0\in(0,\infty)$ independent of $f$.  

A bound  of the form described in \eqref{beta-LS} will be called in what follows an \textit{$L_1\Phi$-entropy inequality}.   It is an example of a (non-homogeneous) additive $\Phi$-entropy inequality, as studied in  \cite{Barthe-Cattiaux-Roberto} and \cite{Chafai}.  
To arrive at the desired inequality, our strategy will be as follows. We will first use Theorem  \ref{defective beta LS} to prove a defective $L_1\Phi$-entropy inequality, that is an inequality of a similar form but 
containing additionally on its right-hand side a term proportional to $\mu|f|$.
Then we will adapt some ideas of Rothaus \cite{Ro}, generalised in \cite{B-Z}, to show that such a defective inequality can be tightened.  We begin by proving the following lemma.
 
\begin{lemma}\label{lem1.3}
\label{monotonicity}
Let  $\Phi(x) = x\left(\log(1+x)\right)^\beta, \beta\in(0, 1]$ and let $\mu$ be a given probability measure.   Then there exists a constant $\kappa\in[0, \infty)$ such that for any functions $f$ and $g$ satisfying $0\leq g \leq f$, $\mu f<\infty$, one has
\[
\mathbf{Ent}_\mu^\Phi(g) \leq \mu\left(f \left[\log_+\left(\frac{f}{\mu f}\right)\right]^\beta\right)  + \kappa\mu(f).
\]

\end{lemma}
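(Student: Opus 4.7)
}
First I would dispose of the degenerate case: if $\mu g = 0$ then $g\equiv 0$ $\mu$-a.e.\ (since $g\ge 0$), so $\mathbf{Ent}_\mu^\Phi(g)=0$ and the bound is trivial. Thus assume $\mu g>0$. The heart of the argument is the elementary pointwise identity
\[
(1+\mu g)\bigl(1+g/\mu g\bigr)=1+g+\mu g+g/\mu g\ge 1+g,
\]
which yields $\log(1+g)\le \log(1+\mu g)+\log(1+g/\mu g)$. Raising to the power $\beta\in(0,1]$ and invoking the subadditivity $(a+b)^\beta\le a^\beta+b^\beta$ for $a,b\ge 0$, then multiplying by $g$ and integrating, the resulting $\mu g\cdot(\log(1+\mu g))^\beta=\Phi(\mu g)$ term exactly absorbs the $-\Phi(\mu g)$ from the entropy, giving
\[
\mathbf{Ent}_\mu^\Phi(g)\le \mu\bigl(g[\log(1+g/\mu g)]^\beta\bigr).
\]

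The next step is to pass from $\log(1+\cdot)$ to $\log_+(\cdot)$ inside the integrand. The inequality $\log(1+y)\le \log 2+\log_+ y$ (check $y\le 1$ and $y\ge 1$ separately) together with subadditivity of $x\mapsto x^\beta$ yields
\[
\mathbf{Ent}_\mu^\Phi(g)\le (\log 2)^\beta\,\mu g+\mu\bigl(g[\log_+(g/\mu g)]^\beta\bigr).
\]

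Now I would bring in the hypothesis $g\le f$ (and hence $\mu g\le \mu f$). Writing $g/\mu g\le f/\mu g=(f/\mu f)\cdot(\mu f/\mu g)$ and using $\log_+(xy)\le \log_+ x+\log_+ y$ (together with $\mu f/\mu g\ge 1$, so $\log_+(\mu f/\mu g)=\log(\mu f/\mu g)$) gives
\[
\log_+(g/\mu g)\le \log_+(f/\mu f)+\log(\mu f/\mu g).
\]
Taking $\beta$-th powers, applying subadditivity once more, multiplying by $g\le f$, and integrating yields
\[
\mu\bigl(g[\log_+(g/\mu g)]^\beta\bigr)\le \mu\bigl(f[\log_+(f/\mu f)]^\beta\bigr)+\mu g\cdot\bigl(\log(\mu f/\mu g)\bigr)^\beta.
\]

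It remains to absorb the last term into $\kappa\mu(f)$. Setting $t=\mu g/\mu f\in(0,1]$, one computes $\mu g\cdot(\log(\mu f/\mu g))^\beta=\mu f\cdot t|\log t|^\beta$, and an elementary calculus exercise shows $\sup_{t\in(0,1]}t|\log t|^\beta=(\beta/e)^\beta$, attained at $t=e^{-\beta}$. Combining this with $\mu g\le\mu f$ in the $(\log 2)^\beta\mu g$ term, one arrives at the stated bound with the explicit constant $\kappa=(\log 2)^\beta+(\beta/e)^\beta$. The only nontrivial step is the initial choice of the decomposition $\log(1+g)\le\log(1+\mu g)+\log(1+g/\mu g)$, which is what produces the cancellation with $\Phi(\mu g)$; once this is spotted, the remainder is routine concavity-of-$x^\beta$ bookkeeping, with $g\le f$ entering only at the very end to replace $g$ by $f$ inside the leading integrand.
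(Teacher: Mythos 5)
Your argument is correct, and it shares the paper's opening move — the pointwise bound $\log(1+g)\le\log(1+\mu g)+\log(1+g/\mu g)$ followed by subadditivity of $x\mapsto x^\beta$, which reduces the entropy to $\mu\bigl(g[\log(1+g/\mu g)]^\beta\bigr)$ — but it then diverges from the paper in how the inner argument $g/\mu g$ is traded for $f/\mu f$. The paper keeps $\log(1+\cdot)$ throughout, replaces the outer $g$ by $f$ immediately, and invokes the generalised relative-entropy inequality of Foug\`eres--Roberto--Zegarli\'nski: for $F(x)=(\log(1+x))^\beta$ concave increasing with $xF'(x)\le\theta$, one has $xF(y)\le xF(x)+\theta y$ for all $x,y\ge0$, applied with $x=f/\mu f$, $y=g/\mu g$; only afterwards does it convert $\log(1+\cdot)$ to $\log_+$, arriving at $\kappa=2(\log2)^\beta+\theta$. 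You instead pass to $\log_+$ early and exploit the pointwise domination $g\le f$ directly via monotonicity of $\log_+$ and the splitting $\log_+(f/\mu g)\le\log_+(f/\mu f)+\log(\mu f/\mu g)$, disposing of the extra term by the elementary supremum $\sup_{t\in(0,1]}t(\log(1/t))^\beta=(\beta/e)^\beta$ (the same constant the paper computes in the proof of Theorem \ref{thm1.1}). Your route is more elementary — it avoids the external lemma entirely — and yields the fully explicit, and in fact slightly smaller, constant $\kappa=(\log2)^\beta+(\beta/e)^\beta$; the paper's route has the advantage of working for any concave increasing $F$ with $xF'(x)$ bounded, not just $F(x)=(\log(1+x))^\beta$, which is the form in which the lemma is reused elsewhere (e.g.\ in Lemma \ref{lem2.5}). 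Your explicit treatment of the degenerate case $\mu g=0$ is a small point the paper glosses over.
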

 
\begin{proof}
We have that
\begin{align}
\label{rel1}
\Ent_\mu^\Phi(g) & = \mu\left(g\left[(\log(1+g))^\beta - (\log(1+\mu g))^\beta\right]\right)\nonumber \\
&\leq \mu\left(g \left[\log\left(1+ \frac{g}{\mu g}\right)\right]^\beta\right)\nonumber\\
&\leq \mu\left(f \left[\log\left(1+ \frac{g}{\mu g}\right)\right]^\beta\right),
\end{align}
since $g\leq f$.
Set $F(x):= (\log(1+x))^\beta$ for $x\in[0, \infty)$.  Then $F$ is increasing and concave.  Moreover, there exists a constant $\theta\in(0,\infty)$ such that $xF'(x) \leq \theta$ for all $x$.  Following \cite{F-R-Z}, we now claim that 
\begin{equation}
\label{claim}
xF(y) \leq xF(x) + \theta y
\end{equation}
for all $x, y \geq 0$.  Indeed, if $y\leq x$ this is trivial.  If $x\leq y$, we have
\begin{align*}
x(F(y) - F(x)) &= x\frac{F(y) - F(x)}{y-x}(y-x)  
 \leq xF'(x)y\\
&\leq \theta y.
\end{align*}
Setting $x = \frac{f}{\mu f}$ and $y=\frac{g}{\mu g}$ in \eqref{claim} and integrating both sides with respect to the measure $\mu$ yields
\[
\mu\left(f \left[\log\left(1+ \frac{g}{\mu g}\right)\right]^\beta\right) \leq \mu\left(f \left[\log\left(1+ \frac{f}{\mu f}\right)\right]^\beta\right) + \theta\mu(f).
\]
Thus, by \eqref{rel1}
\begin{equation}
\label{rel2}
\Ent_\mu^\Phi(g) \leq \mu\left(f \left[\log\left(1+ \frac{f}{\mu f}\right)\right]^\beta\right) + \theta\mu(f).
\end{equation}

Now
\begin{align*}
\mu\left(f \left[\log\left(1+ \frac{f}{\mu f}\right)\right]^\beta\right) &= \mu\left(f \left[\log\left(1+ \frac{f}{\mu f}\right)\right]^\beta\Ind_{\{f\leq \mu f\}}\right) \\
& \qquad + \mu\left(f \left[\log\left(1+ \frac{f}{\mu f}\right)\right]^\beta\Ind_{\{f\geq \mu f\}}\right)\\
&\leq (\log2)^\beta \mu(f) + \mu\left(f \left[\log\left(\frac{2f}{\mu f}\right)\right]^\beta\Ind_{\{f\geq \mu f\}}\right)\\
&=  (\log2)^\beta \mu(f) +  \mu\left(f \left[\log2 + \log\left(\frac{f}{\mu f}\right)\right]^\beta\Ind_{\{f\geq \mu f\}}\right)\\
& \leq 2(\log 2)^\beta\mu(f) + \mu\left(f \left[\log_+\left(\frac{f}{\mu f}\right)\right]^\beta\right),
\end{align*}
using in the last step the elementary inequality $(x+y)^\beta \leq x^\beta + y^\beta$ for $x,y\geq 0$, true when $\beta\in(0,1]$.  Combining this with \eqref{rel2}, we arrive at
\[
\Ent_\mu^\Phi(g) \leq  \mu\left(f \left[\log_+\left(\frac{f}{\mu f}\right)\right]^\beta\right) + \left(2(\log 2)^\beta + \theta\right)\mu(f),
\]
which completes the proof.
\end{proof}

\begin{theorem}\label{thm1.4}
\label{tighten}
Suppose $U$, $\lambda$ and $\mu$ are as in Theorem \ref{defective beta LS}.
In addition, suppose that the following Cheeger type inequality holds
\begin{equation}
\label{cheeger}
\mu|f -\mu f| \leq c_0\mu|\nabla f|
\end{equation}
for some $c_0>0$.  Then there exists $c\in(0,\infty)$ such that \eqref{beta-LS} holds, i.e. for any differentiable function $f$, we have
\[
\mathbf{Ent}_\mu^\Phi(|f|) \leq c\mu|\nabla f|,
\]
where $\Phi(x) = x\left(\log(1+x)\right)^\beta$.
\end{theorem}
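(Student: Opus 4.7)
The plan is to combine Theorem \ref{defective beta LS} with Lemma \ref{monotonicity} to produce a defective $L_1\Phi$-entropy inequality, and then to tighten it by a Rothaus-type argument that absorbs the defect using the Cheeger hypothesis \eqref{cheeger}.

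First, I would apply Theorem \ref{defective beta LS} to $|\phi|$ (noting that $|\nabla|\phi||\leq|\nabla\phi|$) and invoke Lemma \ref{monotonicity} with $g=f=|\phi|$ to obtain a defective $L_1\Phi$-entropy inequality
\[
\mathbf{Ent}_\mu^\Phi(|\phi|) \leq C_1\,\mu|\nabla\phi| + D_1\,\mu|\phi|,
\]
for constants $C_1, D_1\in(0,\infty)$ independent of $\phi$. The key observation is that when one applies this bound to the centred function $\phi-\mu\phi$ in place of $\phi$, the defect is already absorbed: $|\nabla(\phi-\mu\phi)|=|\nabla\phi|$, while by the Cheeger hypothesis \eqref{cheeger} one has $\mu|\phi-\mu\phi|\leq c_0\,\mu|\nabla\phi|$, so that
\[
\mathbf{Ent}_\mu^\Phi(|\phi-\mu\phi|) \leq (C_1 + D_1 c_0)\,\mu|\nabla\phi|.
\]

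To pass from $|\phi-\mu\phi|$ back to $|\phi|$, I would adapt the generalised Rothaus lemma of \cite{L-Z,B-Z}. Using the triangle inequality $|\phi|\leq|\phi-\mu\phi|+\mu|\phi|$, one can apply Lemma \ref{monotonicity} with $g=|\phi|$ and majorant $f=|\phi-\mu\phi|+\mu|\phi|$ (noting $|\nabla f|\leq|\nabla\phi|$), producing a bridge bound of the form
\[
\mathbf{Ent}_\mu^\Phi(|\phi|) \leq \mathbf{Ent}_\mu^\Phi(|\phi-\mu\phi|) + R(\phi),
\]
with a remainder $R(\phi)$ that, thanks to the Rothaus-type cancellation, reduces to a quantity of order $\mu|\phi-\mu\phi|$. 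A further application of \eqref{cheeger} controls this remainder by $c_0\,\mu|\nabla\phi|$, and combining with the previous display yields the required tight inequality with $c = c(C_1, D_1, c_0, \Phi)$.

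The hard part is establishing the bridge with a first-order remainder: a direct mean-value estimate of $\Phi(|\phi|)-\Phi(|\phi-\mu\phi|)$ brings in a derivative $\Phi'\sim(\log(1+|\phi|))^\beta$ that grows with $|\phi|$, giving a defect of order $\mu|\phi|$ that Cheeger cannot absorb. The fix, which is the heart of the generalised Rothaus argument in \cite{L-Z,B-Z}, is to exploit the concavity of $s\mapsto(\log(1+s))^\beta$ together with a Jensen-type estimate to produce a near-cancellation between $\mu\Phi(|\phi|)$ and $\Phi(\mu|\phi|)$, so that only a remainder proportional to $\mu|\phi-\mu\phi|$ survives, as needed for the Cheeger absorption.
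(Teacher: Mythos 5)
Your first two steps are fine: the defective inequality $\mathbf{Ent}_\mu^\Phi(|\phi|)\le C_1\mu|\nabla\phi|+D_1\mu|\phi|$ does follow from Theorem \ref{defective beta LS} together with Lemma \ref{monotonicity}, and applying it to $\phi-\mu\phi$ and using \eqref{cheeger} gives a tight bound on $\mathbf{Ent}_\mu^\Phi(|\phi-\mu\phi|)$. The gap is the ``bridge''
\[
\mathbf{Ent}_\mu^\Phi(|\phi|) \leq \mathbf{Ent}_\mu^\Phi(|\phi-\mu\phi|) + O\bigl(\mu|\phi-\mu\phi|\bigr),
\]
which you assert but never prove, and which is precisely the nontrivial content of the Rothaus step. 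The tool you propose does not deliver it: Lemma \ref{monotonicity} with $g=|\phi|$ and majorant $h=|\phi-\mu\phi|+\mu|\phi|$ yields
\[
\mathbf{Ent}_\mu^\Phi(|\phi|) \leq \mu\Bigl(h\bigl[\log_+\tfrac{h}{\mu h}\bigr]^\beta\Bigr)+\kappa\,\mu(h),
\qquad \mu(h)=\mu|\phi-\mu\phi|+\mu|\phi|,
\]
so the remainder contains $\kappa\,\mu|\phi|$ --- exactly the defect you are trying to remove, and one that \eqref{cheeger} cannot absorb. Your closing paragraph concedes that the mean-value/concavity route also produces a $\mu|\phi|$-sized defect and then defers to an unspecified ``near-cancellation''; no mechanism for that cancellation is exhibited, so the argument is incomplete at its crucial point.

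The paper's actual mechanism is structurally different: instead of bridging between $\mathbf{Ent}_\mu^\Phi(|\phi|)$ and $\mathbf{Ent}_\mu^\Phi(|\phi-\mu\phi|)$, one proves the defective bound \emph{uniformly over all translates},
\[
\sup_{t\in\R}\mathbf{Ent}_\mu^\Phi(|f+t|)\le \tilde a C\,\mu|\nabla f|+2\bigl(\tilde a(D+\kappa)+\tilde b\bigr)\,\mu|f|.
\]
This is obtained by writing $|f+t|=\bigl(|f+t|^{1/2}\bigr)^2$, invoking Lemma A.1 of \cite{L-Z} for squares, and using the $t$-uniform bound $G:=\bigl(|f+t|^{1/2}-\mu|f+t|^{1/2}\bigr)^2\le |f|+\mu|f|$ together with Lemma \ref{monotonicity} and Theorem \ref{defective beta LS}. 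Substituting $f\mapsto f-\mu f$ (which leaves $\mu|\nabla f|$ unchanged and turns the defect $\mu|f|$ into $\mu|f-\mu f|$) and choosing $t=\mu f$ on the left then gives $\mathbf{Ent}_\mu^\Phi(|f|)\le \tilde aC\mu|\nabla f|+2(\tilde a(D+\kappa)+\tilde b)\mu|f-\mu f|$, after which \eqref{cheeger} finishes the proof. If you want to salvage your outline, the missing ingredient is precisely this translation-uniform version of the defective inequality; without it, the passage from the centred to the uncentred entropy is unsupported.
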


\begin{proof}

By Lemma A.1 of the appendix of \cite{L-Z} , we have that there exist constants $\tilde{a}$ and $\tilde{b}$ such that
\[
\Ent_\mu^\Phi(f^2) \leq \tilde{a}\Ent_\mu^\Phi\left((f-\mu f)^2\right) + \tilde{b}\mu(f - \mu f)^2.
\]
Thus, for any $t\in\R$, we have that
\begin{align}
\label{rot1}
\Ent_\mu^\Phi|f+t| &= \Ent_\mu^\Phi\left(|f+t|^\frac{1}{2}\right)^2 \nonumber\\
&\leq \tilde{a}\Ent_\mu^\Phi\left[ \left(|f+t|^\frac{1}{2} - \mu|f+t|^\frac{1}{2}\right)^2\right] + \tilde{b}\mu\left(|f+t|^\frac{1}{2} - \mu|f+t|^\frac{1}{2}\right)^2.
\end{align}
Let $G =\left(|f+t|^\frac{1}{2} - \mu|f+t|^\frac{1}{2}\right)^2$.  Note that we can write
\begin{align*}
G = \left(|f+t|^\frac{1}{2} - \mu|f+t|^\frac{1}{2}\right)^2 & = \left(\int |f(\omega)+t|^\frac{1}{2} - |f\left(\tilde{\omega}\right)+t|^\frac{1}{2}d\mu\left(\tilde{\omega}\right)\right)^2 \\
&\leq \int \left(|f(\omega)+t|^\frac{1}{2} - |f\left(\tilde{\omega}\right)+t|^\frac{1}{2}\right)^2d\mu\left(\tilde{\omega}\right) \\
&\leq \int|f(\omega) - f\left(\tilde{\omega}\right)|d\mu\left(\tilde{\omega}\right)\\
&\leq |f| + \mu |f|,
\end{align*}
using the elementary inequality $\left||x+t|^\frac{1}{2} - |y+t|^\frac{1}{2}\right|\leq |x-y|^\frac{1}{2}$ in the last but one step.  Hence, we have by \eqref{rot1} that
\begin{equation}
\label{G1}
\Ent_\mu^\Phi|f+t| \leq \tilde{a}\Ent_\mu^\Phi\left( G\right) + 2\tilde{b}\mu|f|.
\end{equation}
Since $0\leq G \leq |f| + \mu|f|$, by Lemma \ref{monotonicity} and Theorem \ref{defective beta LS}, we have
\begin{align}
\label{G2}
\Ent_\mu^\Phi\left( G\right) &\leq \mu\left( (|f|+ \mu |f|)\left[\log_+\frac{|f| + \mu |f|}{\mu\left(|f|+ \mu |f|\right)}\right]^\beta\right) + 2\kappa\mu|f|\nonumber\\
&\leq C\mu|\nabla f| + 2(D+\kappa)\mu|f| .
\end{align}
  Combining \eqref{G1} and \eqref{G2} yields
\begin{equation}
\label{G3}
\sup_{t\in\R}\mathbf{Ent}_\mu^\Phi|f+t| \leq \tilde{a}C \mu|\nabla f| + 2(\tilde{a}(D + \kappa) + \tilde{b})\mu|f|.
\end{equation}
This implies the following bound
\begin{equation}
\label{rot2}
\Ent_\mu^\Phi|f| \leq \tilde{a}C\mu|\nabla f| + 2(\tilde{a}(D + \kappa) +\tilde{b})\mu|f-\mu f|.
\end{equation}
Finally we can apply the Cheeger type inequality \eqref{cheeger} to the last term on the right hand side of \eqref{rot2} to arrive at
\[
\Ent_\mu^\Phi\left(|f|\right) \leq c \mu|\nabla f|,
\]
with $c = \tilde{a}C + 2c_0\left(\tilde{a}(D + \kappa) + \tilde{b}\right)$.
\end{proof}

In the same spirit as Corollary \ref{perturbation1}, this inequality is stable under perturbations of the following type.

\begin{corollary}\label{cor1.5}
\label{perturbation2}
Let $U$, $\lambda$ and $\mu$ be as in Theorem \ref{defective beta LS}.
Suppose also that the Cheeger type inequality \eqref{cheeger} holds.  As in Corollary \ref{perturbation1}, let $W$ be a real function which is locally Lipschitz and such that $\int e^{-W}d\mu<\infty$ and
\[
|\nabla W| \leq  \delta\left(|U|^\beta + |\nabla U|\right)  + C(\delta), \qquad |W|^\beta \leq a_0 \left(|U|^\beta + |\nabla U|\right) + a_1
\]
for some $\delta < \frac{1}{A}, C(\delta), a_0, a_1 \in(0, \infty)$ and $\beta\in(0,1]$.  Moreover, let $V$ be a measurable function such that
\[
osc(V) \equiv \sup V - \inf V <\infty.
\]
Then there exists a constant $\hat{c}$ such that
\[
\mathbf{Ent}_{\hat\mu}^\Phi\left(|f|\right)  \leq \hat{c}\hat{\mu}|\nabla f|,
\]
where $\hat\mu$ is the probability measure on $\R^N$ given by 
\[\hat\mu(d\lambda) := e^{-W-V}\mu(d\lambda)/{\hat Z} , 
\]
with a normalisation constant ${\hat Z}\in(0,\infty)$ and  $\Phi(x) = x\left(\log(1+x)\right)^\beta$.
\end{corollary}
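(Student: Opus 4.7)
The plan is to combine Corollary~\ref{perturbation1}, which already handles the (possibly unbounded) perturbation by $W$, with an elementary bounded-oscillation comparison to absorb $V$, and then re-run the Rothaus-type tightening argument from Theorem~\ref{tighten} for the resulting measure $\hat\mu$.

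First I would set $\tilde\mu := e^{-W}\mu/Z_{\tilde\mu}$. Corollary~\ref{perturbation1} directly gives the defective $L_1\Phi$-entropy inequality for $\tilde\mu$. Since $osc(V)<\infty$, for every $g\geq 0$ one has
\[
e^{-osc(V)}\,\tilde\mu(g) \le \hat\mu(g) \le e^{osc(V)}\,\tilde\mu(g),
\]
so the defective inequality transfers from $\tilde\mu$ to $\hat\mu$ up to multiplicative factors controlled by $e^{2\,osc(V)}$. The only subtlety is that on the left hand side the normalisation is $\tilde\mu|f|$ rather than $\hat\mu|f|$. Splitting
\[
\log\frac{|f|}{\hat\mu|f|} = \log\frac{|f|}{\tilde\mu|f|} + \log\frac{\tilde\mu|f|}{\hat\mu|f|},
\]
applying the sub-additivity $(a+b)^\beta \le a^\beta + b^\beta$ valid for $\beta\in(0,1]$, and using $|\log(\tilde\mu|f|/\hat\mu|f|)| \le osc(V)$, the change of normalisation only contributes an extra multiple of $\hat\mu|f|$. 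Hence $\hat\mu$ inherits a defective inequality
\[
\hat\mu\!\left(|f|\,\Big|\log\tfrac{|f|}{\hat\mu|f|}\Big|^\beta\right) \le \hat C\,\hat\mu|\nabla f| + \hat D\,\hat\mu|f|.
\]

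The main obstacle is then producing a Cheeger inequality for $\hat\mu$: we are only given Cheeger for $\mu$, and because $W$ is unbounded a direct comparison between $\mu$ and $\hat\mu$ is not available. The remedy is to propagate the $U$-bound rather than Cheeger: exactly the computation in the proof of Corollary~\ref{perturbation1}, applied to the combined potential $W+V$ and using condition \eqref{potential condition} on $W$ together with the boundedness of $V$, yields
\[
\hat\mu\!\left(|f|\left(|U|^\beta+|\nabla U|\right)\right) \le \hat A\,\hat\mu|\nabla f| + \hat B\,\hat\mu|f|.
\]
One then invokes the Cheeger-from-$U$-bound mechanism of Theorem~\ref{cheeger from U-bound}, whose local in-ball Cheeger input is insensitive to a locally bounded change of density and therefore available for $\hat\mu$ precisely because $W$ is locally Lipschitz and $V$ has bounded oscillation; this yields the required $\hat\mu|f-\hat\mu f|\le \hat c_0\,\hat\mu|\nabla f|$.

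With both ingredients in place, I would run the proof of Theorem~\ref{tighten} essentially verbatim with $\mu$ replaced by $\hat\mu$: apply Lemma~A.1 of \cite{L-Z} to $|f+t|^{1/2}$, bound the resulting squared difference by $|f|+\hat\mu|f|$ via $||x+t|^{1/2}-|y+t|^{1/2}|\le|x-y|^{1/2}$, combine Lemma~\ref{monotonicity} with the defective inequality just obtained for $\hat\mu$, take the infimum over $t\in\R$ to convert $\hat\mu|f|$ into $\hat\mu|f-\hat\mu f|$, and close with the Cheeger bound established above. The constants then assemble into a single $\hat c\in(0,\infty)$ of exactly the shape produced in Theorem~\ref{tighten}, completing the proof.
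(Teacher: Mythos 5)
Your overall architecture is close to the paper's for the $W$-part, but your treatment of $V$ diverges, and it is there that the argument develops a genuine gap. The paper disposes of $V$ \emph{after} the tight inequality has been established for $\tilde\mu=e^{-W}\mu/Z_0$: writing, via Lemma 3.4.2 of \cite{A-B-C-F-G-M-R-S},
\[
\Ent_{\hat\mu}^\Phi(|f|)=\inf_{t\ge 0}\hat\mu\left(\Phi(|f|)-\Phi'(t)(|f|-t)-\Phi(t)\right),
\]
the integrand is pointwise nonnegative, so a two-sided density comparison between $\hat\mu$ and $\tilde\mu$ transfers the \emph{tight} inequality directly at the cost of a factor $e^{2\,osc(V)}$, with no need to revisit the defective inequality, the normalisation inside the logarithm, or the Cheeger inequality for $\hat\mu$. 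Your route --- transfer the defective inequality to $\hat\mu$ (your handling of the shift $\log(\tilde\mu|f|/\hat\mu|f|)$ by sub-additivity of $x^\beta$ is fine) and then re-run the tightening of Theorem \ref{tighten} for $\hat\mu$ --- forces you to produce a Cheeger inequality for $\hat\mu$ itself.

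That is where the gap sits. You propose to obtain $\hat\mu|f-\hat\mu f|\le \hat c_0\,\hat\mu|\nabla f|$ from Theorem \ref{cheeger from U-bound}, but that theorem requires its conditions (a) and (b) --- the sub-level-set containment \eqref{compactness assumption} and the in-ball Poincar\'e inequality \eqref{Poincare in ball} --- and neither is among the hypotheses of the present corollary, which assumes only the Cheeger inequality \eqref{cheeger} for the unperturbed $\mu$ together with \eqref{class sob} and \eqref{U-bound}. Since $W$ is unbounded, \eqref{cheeger} does not transfer to $e^{-W}\mu/Z_0$ by a Holley--Stroock comparison, so under the stated hypotheses your chain breaks at this step. (A secondary, fixable, slip: you cannot literally run the computation of Corollary \ref{perturbation1} ``for the combined potential $W+V$'', since $V$ is only measurable and has no gradient; the $U$-bound for $\hat\mu$ should instead be obtained for $\tilde\mu$ as in \eqref{potential condition} and then transferred by the bounded-density comparison.) To be fair, the paper's own $V=0$ step is terse on exactly the same issue --- it tacitly uses a Cheeger inequality for $\tilde\mu$, which in the intended applications comes from Theorem \ref{cheeger from U-bound} under conditions (a) and (b) --- but your proof makes the reliance on those unassumed hypotheses explicit and essential, whereas the paper's variational-formula reduction at least confines the difficulty to the $V=0$ case.
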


\begin{proof}
In the case $V=0$, the result is obtained by following the proof of Theorem \ref{tighten}, using Corollary \ref{perturbation1} where necessary.  In the case $V\neq0$, by Lemma 3.4.2 of \cite{A-B-C-F-G-M-R-S}, we may write
\begin{align*}
\Ent_{\hat\mu}^\Phi(|f|) &= \inf_{t\in[0,\infty)} \hat\mu\left(\Phi(|f|) - \Phi'(t)(|f|-t) - \Phi(t)\right)\\
&\leq \frac{e^{osc(V)}Z_0}{\hat Z}\inf_{t\in[0,\infty)} \int\left(\Phi(|f|) - \Phi'(t)(|f|-t) - \Phi(t)\right)\frac{e^{-W}}{Z_0}d\mu
\end{align*}
where $Z_0=\int e^{-W}d\mu$.  Applying the above case when $V=0$ to the measure $\frac{e^{-W}}{Z_0}d\mu$ yields
\begin{align*}
\Ent_{\hat\mu}^\Phi(|f|) &\leq \frac{e^{osc(V)}Z_0}{\hat Z}c'\int|\nabla f| \frac{e^{-W}}{Z_0}d\mu \\
&\leq c'e^{2osc(V)}\hat{\mu}|\nabla f|,
\end{align*}
for some constant $c'$, so that the result holds.
\end{proof}

In Theorem \ref{tighten} we assume that the Cheeger type inequality \eqref{cheeger} holds, together with inequalities \eqref{class sob} and \eqref{U-bound}.  However, we note below that under some conditions it is possible to deduce the Cheeger type inequality directly from a weaker version of the $U$-bound \eqref{U-bound}, using the method in \cite{H-Z}.

\begin{theorem}\label{thm1.6}
\label{cheeger from U-bound}
Let $d\mu = \frac{e^{-U}}{Z}d\lambda$ be probability measure on $\R^N$, and suppose that the following inequality is satisfied
\begin{equation}
\label{weak U-bound}
\mu\left(f |U|^\beta\right)\leq A\mu|\nabla f| + B\mu|f|,
\end{equation}
for some $\beta>0$.  Suppose also that
\begin{itemize}
\item[{\rm (a)}] for any $L\geq0$ there exists $r=r(L)\in(0,\infty)$ such that 
\begin{equation}\label{compactness assumption}
\left\{|U|^\beta\leq L\right\} \subset B(r) 
\end{equation}
for some ball $B(r)$ of radius $r$;
\item[{\rm (b)}] for $r=r(L)$ there exists $m_r\in(0,\infty)$ such that the following Poincar\'e inequality in the ball $B(r)$ is satisfied
\begin{equation}
\label{Poincare in ball}
\int_{B(r)} \left|f - \frac{1}{\lambda(B(r))}\int_{B(r)} f d\lambda\right|\, d\lambda\, \leq\, \frac1{m_r}\int_{B(r)}|\nabla f|d\lambda.
\end{equation}
\end{itemize}
Then there exists a constant $c_0$ such that
\[
\mu|f- \mu f| \leq c_0\mu|\nabla f|.
\]
\end{theorem}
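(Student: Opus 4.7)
The plan is to localize on a large ball and treat the inside and outside separately. Since for any constant $m$ the triangle inequality yields $\mu|f-\mu f|\leq 2\mu|f-m|$, it suffices to bound $\mu|f-m|$ for a suitably chosen $m$. I will take $m:=\frac{1}{\lambda(B(r))}\int_{B(r)}f\,d\lambda$, the Lebesgue mean over the ball $B(r)$ of radius $r=r(L)$ provided by assumption {\rm (a)}, where $L>0$ is a parameter to be fixed at the end.

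Splitting $\mu|f-m|=\int_{B(r)}|f-m|\,d\mu+\int_{B(r)^c}|f-m|\,d\mu$, the interior piece is handled using assumption {\rm (b)}. Since $U$ is locally Lipschitz and bounded below, there exist finite constants $-K\leq U\leq M_r$ on the compact ball $B(r)$, which allow one to pass between $\mu$- and $\lambda$-integrals on $B(r)$ at the cost of exponential factors. The Poincar\'e inequality (\ref{Poincare in ball}) applied to $f-m$ then yields
\[
\int_{B(r)}|f-m|\,d\mu\;\leq\;\frac{e^{K+M_r}}{m_r}\,\mu|\nabla f|.
\]
For the exterior piece, assumption {\rm (a)} gives $|U|^\beta>L$ pointwise on $B(r)^c$, hence
\[
\int_{B(r)^c}|f-m|\,d\mu\;\leq\;\frac{1}{L}\,\mu\bigl(|f-m|\,|U|^\beta\bigr),
\]
and applying the weak $U$-bound (\ref{weak U-bound}) to $|f-m|$, together with the pointwise estimate $|\nabla|f-m||\leq|\nabla f|$, bounds the right-hand side by $\tfrac{A}{L}\mu|\nabla f|+\tfrac{B}{L}\mu|f-m|$.

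Adding the two estimates and choosing $L>B$ to absorb the $\mu|f-m|$-contribution into the left-hand side produces $\mu|f-m|\leq c'\mu|\nabla f|$ for an explicit finite constant $c'$, and the Cheeger inequality follows with $c_0=2c'$. The main obstacle is really a bookkeeping point: one must check that the constant on the ball side stays finite. Since the only requirement on $L$ is $L>B$, one fixes any such $L$ once and for all; the corresponding $r=r(L)$ is then determined, and the constants $M_r$, $m_r$, $K$ are finite thanks to the regularity of $U$ and to assumption {\rm (b)}. A standard approximation argument (e.g.\ truncation of $f$) ensures the \emph{a priori} finiteness of $\mu|f-m|$ needed for the absorption step.
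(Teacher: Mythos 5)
Your proposal is correct and follows essentially the same route as the paper's proof: reduce to $\mu|f-m|$ with $m$ the Lebesgue mean over $B(r)$, control the part near the origin via the local Poincar\'e inequality and the boundedness of the density $e^{-U}/Z$ on $B(r)$, control the far part by $\frac{1}{L}\mu(|f-m||U|^\beta)$ and the weak $U$-bound, then absorb by taking $L>B$. The only cosmetic difference is that you partition by $B(r)$ versus $B(r)^c$ while the paper partitions by the level set $\{|U|^\beta\leq L\}$ and its complement; these are interchangeable because of the inclusion in assumption (a).
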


\begin{proof}
We have that
\[
\mu|f - \mu f| \leq 2\mu|f -m|
\]
for all $m\in\R$.  Now for $L\geq0$ we have
\begin{equation}
\label{split2}
\mu|f - m| \leq \mu\left(|f-m|\Ind_{\{|U|^\beta \leq L\}}\right) + \mu\left(|f-m|\Ind_{\{|U|^\beta \geq L\}}\right).
\end{equation}
We have that $\{|U|^\beta \leq L\}\subset B(r)$ for some $r=r(L)\in(0, \infty)$, so that putting $m = \frac{1}{\lambda(B(r))|}\int_{B(r)}fd\lambda$, and noting that on the set $\{|U|^\beta \leq R\}$ there exists a constant $A_r$ such that 
\[
\frac{1}{A_r} \leq \frac{d\mu}{d\lambda} \leq A_r,
\]
we can bound the first term using assumption (a).  Indeed,
\begin{align}
\label{ball}
\mu\left(|f-m|\Ind_{\{|U|^\beta \leq L\}}\right) &\leq A_r\int_{B(r)}\left|f - \frac{1}{\lambda(B(r))}\int_{B(r)} f d\lambda\right|d\lambda\nonumber\\
&\leq  \frac{A_r}{m_r} \int_{B(r)}|\nabla f|d\lambda \leq  \frac{A_r^2}{m_r}\mu|\nabla f|
 \end{align}
using \eqref{Poincare in ball}. On the other hand,  using \eqref{weak U-bound}, we have
 \begin{align}
 \label{outside}
 \mu\left(|f-m|\Ind_{\{|U|^\beta \geq L\}}\right) & \leq \frac{1}{L}\mu\left(\left|f - m\right||U|^\beta\right)\nonumber\\
  & \leq \frac{A}L\mu|\nabla f| + \frac{B}{L}\mu|f - m| .
 \end{align}
Using estimates \eqref{ball} and \eqref{outside} in \eqref{split2}, and taking $L$ large enough ends the proof.
\end{proof}

We can now combine all the results of this section into the following Theorem.

\begin{theorem}\label{thm1.7}
\label{main}
Let $U$, $\lambda$ and $\mu$ be as in Theorem \ref{defective beta LS}.
Suppose also that conditions {\rm (a)} and {\rm (b)} of Theorem \ref{cheeger from U-bound} are satisfied. Then  there exists $c\in(0,\infty)$ such that \eqref{beta-LS} holds, i.e.
\[
\mathbf{Ent}_\mu^\Phi(|f|) \leq c\mu|\nabla f|,
\]
where $\Phi(x) = x\left(\log(1+x)\right)^\beta$.
\end{theorem}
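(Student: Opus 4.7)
The plan is to assemble Theorem \ref{thm1.7} as an immediate consequence of the two previous main results of this section, namely Theorem \ref{cheeger from U-bound} and Theorem \ref{tighten}. The content of this theorem is really that conditions (a) and (b) of Theorem \ref{cheeger from U-bound} can be added to the hypotheses of Theorem \ref{defective beta LS} as a substitute for the externally assumed Cheeger inequality \eqref{cheeger} that was required in Theorem \ref{tighten}. So the proof will be a short chaining argument with essentially no new calculation.

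First, I would observe that the $U$-bound hypothesis \eqref{U-bound} inherited from Theorem \ref{defective beta LS} immediately implies the weaker inequality \eqref{weak U-bound}, since $|\nabla U|\geq 0$ yields
\[
\mu\left(|f||U|^\beta\right)\leq \mu\left(|f|(|U|^\beta + |\nabla U|)\right)\leq A\mu|\nabla f|+B\mu|f|.
\]
Combined with the standing hypotheses (a) and (b), the hypotheses of Theorem \ref{cheeger from U-bound} are all met; that theorem therefore produces a constant $c_0\in(0,\infty)$ such that the Cheeger type inequality
\[
\mu|f-\mu f|\leq c_0\,\mu|\nabla f|
\]
holds.

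Next, with the Cheeger inequality in hand, all the hypotheses of Theorem \ref{tighten} are satisfied, namely the classical Sobolev inequality \eqref{class sob}, the $U$-bound \eqref{U-bound}, and \eqref{cheeger}. Applying Theorem \ref{tighten} directly delivers the desired tight $L_1\Phi$-entropy inequality
\[
\mathbf{Ent}_\mu^\Phi(|f|)\leq c\,\mu|\nabla f|
\]
with $\Phi(x)=x(\log(1+x))^\beta$ and some finite $c>0$, completing the proof.

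Since the argument is purely a composition of two previous results, there is no real obstacle to overcome; the only point to verify carefully is the trivial implication \eqref{U-bound}$\Rightarrow$\eqref{weak U-bound} so that Theorem \ref{cheeger from U-bound} is genuinely applicable. One could also emphasise in passing that the constant $c$ produced here is, by tracking through the proofs of Theorems \ref{defective beta LS}, \ref{tighten} and \ref{cheeger from U-bound}, an explicit function of $a,b,A,B,c_0,\beta,\varepsilon$ and the geometric data $r(L), m_r, A_r$, although stating this explicitly is not needed for the statement as formulated.
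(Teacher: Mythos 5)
Your proposal is correct and is exactly the argument the paper intends: the paper states Theorem \ref{main} as a direct combination of Theorem \ref{cheeger from U-bound} (whose weak $U$-bound hypothesis follows trivially from \eqref{U-bound}) and Theorem \ref{tighten}, offering no further proof. Your only addition is the explicit, and correct, remark that \eqref{U-bound} implies \eqref{weak U-bound}.
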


To conclude this section, we finally note that the $L_1\Phi$-entropy inequality \eqref{beta-LS} can be tensorised in the following sense.

\begin{lemma}[Tensorisation]\label{lem1.8}
\label{tensorisation}
Let $I$ be a finite index set, and $\nu_{i}, i\in I$ be probability measures.  Set $\nu_I := \otimes_{i \in I}\nu_i$.  Suppose that for each $i\in I$, $\nu_i$ satisfies the $L_1\Phi$-entropy inequality \eqref{beta-LS} with a constant $c(i)\in(0,\infty)$.  Then so does $\nu_I$ with constant $\max_{ i\in I}\{c(i)\}$.
\end{lemma}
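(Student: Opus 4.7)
The plan is induction on $n := |I|$ with reduction to the two-factor case. For $n = 1$ there is nothing to prove. For the inductive step, pick $i_0 \in I$ and write $\nu_I = \nu_{i_0}\otimes \nu_{I\setminus\{i_0\}}$; the inductive hypothesis yields \eqref{beta-LS} for $\nu_{I\setminus\{i_0\}}$ with constant $\max_{i\in I\setminus\{i_0\}} c(i)$, so it suffices to treat a product $\nu = \nu_1 \otimes \nu_2$ of two measures satisfying \eqref{beta-LS} with constants $c_1$ and $c_2$, and to show that $\nu$ satisfies \eqref{beta-LS} with constant $\max(c_1,c_2)$.

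For two factors, the starting point is the exact Fubini identity
\[
\mathbf{Ent}_{\nu_1\otimes\nu_2}^\Phi(|f|) = \int \mathbf{Ent}_{\nu_1}^\Phi\bigl(|f|(\cdot,x_2)\bigr)\,d\nu_2(x_2) + \mathbf{Ent}_{\nu_2}^\Phi\bigl(\nu_1|f|\bigr),
\]
valid for any Orlicz function $\Phi$ and obtained by writing $\nu_1\Phi(|f|)(x_2) = \mathbf{Ent}_{\nu_1}^\Phi(|f|)(x_2) + \Phi\bigl((\nu_1|f|)(x_2)\bigr)$ pointwise in $x_2$ and integrating against $\nu_2$. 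I would then apply the one-dimensional hypothesis on $\nu_1$ pointwise in $x_2$ to bound the first term by $c_1\,\nu|\nabla_1 f|$, where $\nabla_1$ denotes the sub-gradient acting in the $x_1$-block. For the second term I would apply the hypothesis on $\nu_2$ to the nonnegative function $x_2 \mapsto (\nu_1|f|)(x_2)$, which gives $c_2\,\nu_2|\nabla_2(\nu_1|f|)|$; since $\nabla_2$ acts on variables disjoint from those integrated by $\nu_1$, the two operations commute, and Minkowski's inequality in integral form yields $|\nabla_2(\nu_1|f|)| \leq \nu_1|\nabla_2 f|$, so the second term is at most $c_2\,\nu|\nabla_2 f|$. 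Summing, pulling out $\max(c_1,c_2)$, and iterating through the induction then produces \eqref{beta-LS} for $\nu_I$ with the claimed constant $\max_{i\in I}c(i)$.

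The one non-routine step is the integral Minkowski bound $|\nabla_2(\nu_1|f|)| \leq \nu_1|\nabla_2 f|$, which relies on $|\nabla_2 \cdot|$ being the Euclidean norm of a vector of horizontal fields, so that a positive integral can be pulled outside the $\ell^2$-norm; one must also check that $\nu_1|f|$ is sufficiently regular in $x_2$ to apply the one-dimensional inequality, which follows from the assumed differentiability of $f$ by a standard approximation argument. This recombination is consistent with the natural decomposition of the sub-gradient on the product space, $|\nabla f|^2 = \sum_{i\in I}|\nabla_i f|^2$.
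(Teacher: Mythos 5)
Your proof is correct and follows essentially the same route as the paper: the additive (Fubini) decomposition of the product entropy into the integrated conditional entropy plus the entropy of the marginal, the factor-wise application of \eqref{beta-LS}, and the Minkowski-type bound $|\nabla_2(\nu_1|f|)|\leq \nu_1|\nabla_2 f|$, iterated by induction. One small caveat: the argument delivers the bound $\mathbf{Ent}_{\nu_I}^\Phi(|f|)\leq \max_i c(i)\sum_{i\in I}\nu_I|\nabla_i f|$, i.e.\ with the $\ell^1$ combination of the blockwise gradients on the right (which is the convention the paper adopts on product spaces, cf.\ $|\nabla_\Lambda f|:=\sum_{\bf i}|\nabla_{\bf i}f|$ in Section 5); your closing remark that this matches $|\nabla f|^2=\sum_i|\nabla_i f|^2$ is slightly off, since passing from the $\ell^1$ to the $\ell^2$ combination would cost an extra factor depending on $|I|$.
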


\begin{proof}
The proof follows by induction. The key observation is as follows: for $J\subset I$ and $k\notin J$, one has
 \begin{align}
\nu_k\otimes\nu_J\Phi(f)-\Phi(\nu_k\otimes\nu_Jf) &=
\nu_k\left(\nu_J\Phi(f) -\Phi(\nu_Jf)\right)+\left(\nu_k\Phi(\nu_Jf)-\Phi(\nu_k(\nu_Jf))\right)
\nonumber \\
&\leq \nu_k\left(\sum_{j\in J}c_J\nu_J |\nabla_jf|\right) + c_k \nu_k|\nabla_k \nu_Jf| \nonumber \\
&\leq \max(c_J,c_k)\sum_{j\in J\cup k}\nu_k\otimes\nu_J |\nabla_jf| \nonumber .
\end{align}
\end{proof} 


\section{Isoperimetric inequalities} \label{Sec3.Isoperimetric Inequalities}
In this section our aim is to derive isoperimetric
information for the measure \(\mu\) starting from $L_1\Phi$-entropy inequalities. We assume that \(\mu\) is non-atomic and that the distance \(d\) on \(\R^N\)  is related to the modulus of the gradient of a function \(f:\R^N
\rightarrow \R\) by  \begin{equation}\label{metricgrad}\vert
\nabla f \vert(x) = \limsup_{d(x,y)\downarrow 0}\frac{\vert f(x)-f(y)\vert}{d(x,y)}.
\end{equation} 
As usual, we define the surface measure of a Borel set \(A\subset \R^N\) by \begin{equation*}
\mu^+(A)=\liminf_{\varepsilon \downarrow0} \frac{\mu(A^\varepsilon\setminus
A)}{\varepsilon}
\end{equation*} 
where \(A^\varepsilon= \{x \in \R^n : d(x,A)<\varepsilon\}\) is the (open) \(\varepsilon\)-neighbourhood of \(A\) (with respect to \(d\)). We are concerned with a problem of estimating the isoperimetric profile of the measure \(\mu\), that is a function \(\mathcal{I}_{\mu}:[0,1]\rightarrow \R^+ \) defined by \[\mathcal{I}_\mu(t)=\inf\{\mu^+(A):
A  \text{ Borel such that } \mu(A)=t\}\] 
(with \(\mathcal{I}_\mu(0)=\mathcal{I}_\mu(1)=0) \). By definition it is the largest function such that the following isoperimetric inequality holds 
\begin{equation}
\label{best}
\mathcal{I}_\mu(\mu(A)) \le\mu^+(A).
\end{equation}

For \(q >1\) and \(p\) such that \(\frac{1}{q}+\frac{1}{p}=1\),  we define  functions \(\mathcal{U}_q=f_p \circ F_p^{-1}\) where \(f_p\)
is the density of the measure \(d\nu_p(x)=\frac{e^{-\vert x\vert^p}}{Z_p}dx\) on $\R$ and
\(F_p'=f_p\) (here, \(\vert x \vert\) denotes the Euclidean norm of \(x \in
\R)\). This is motivated by the fact that \(\mathcal{U}_q\) is the  isoperimetric
function of \(\nu_p\) in dimension $1$. It is known (see  \cite{B-Z}) that   \(\mathcal{U}_{q}(t)\)
is symmetric and behaves like \(G(t)=t \left( \log \left( \frac{1}{t} \right) \right)^\frac{1}{q}\)
near the origin so that for some constant \(L_q>0\), we have

\begin{align}\label{qequivalence}
\frac{1}{L_{q}}G(\min(t,1-t))\le \mathcal{U}_q(t) \le L_{q}G(\min(t,1-t))
\end{align}
for all $t\in[0,1]$.

\begin{theorem}\label{thm3.1}\label{isoq}
Assume that the $L_1\Phi$-entropy inequality 
\[
\mathbf{Ent}_\mu^\Phi(|f|) \leq c\ \mu|\nabla f|
\]
holds for some constant $c\in(0, \infty)$ and all locally Lipschitz
functions \(f\), where $\Phi(x) = x\left(\log(1+x)\right)^\beta$ and $\beta\in(0,1]$. 
Then \(\mathcal{I}_\mu \ge \frac{1}{\tilde{c}}\ \mathcal{U}_q\) with some constant \(\tilde{c}>0\), 
\(q=\frac{1}{\beta}\) and the measure \(\mu\) satisfies
an isoperimetric inequality of the form
 \begin{equation}\label{isoqineq}
\mathcal{U}_q(t) \le \tilde{c}\ \mu^+(A)
\end{equation}
for all a Borel sets \(A\) of measure \(t=\mu(A) \).
\end{theorem}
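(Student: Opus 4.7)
The plan is to apply the $L_1\Phi$-entropy inequality to a rescaled Lipschitz approximation of $\mathbf{1}_A$ and then to optimise over the rescaling parameter, so as to recover the profile $\mathcal{U}_q$ from the logarithmic growth of $\Phi$.

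For a Borel set $A$ with $\mu(A)=t\in(0,1)$ and $\varepsilon>0$, I would introduce the Lipschitz cut-off
\[
f_\varepsilon(x)=\max\!\Bigl(0,\,1-\tfrac{d(x,A)}{\varepsilon}\Bigr),
\]
which equals $1$ on $A$, vanishes outside $A^\varepsilon$, and satisfies $|\nabla f_\varepsilon|\leq\varepsilon^{-1}\mathbf{1}_{A^\varepsilon\setminus A}$ by \eqref{metricgrad}. Applying the hypothesis to $\lambda f_\varepsilon$ for a free parameter $\lambda>0$, then passing to the $\liminf$ as $\varepsilon\downarrow 0$ (dominated convergence gives the limit on the left since $0\leq\lambda f_\varepsilon\leq\lambda$ and $\Phi$ is continuous; the bound on $|\nabla f_\varepsilon|$ together with the definition of $\mu^+$ controls the right), one obtains, after restricting to the dense family of sets with $\mu(\partial A)=0$,
\begin{equation}\label{plan-iso}
\lambda\,\Phi(1)\,t-\Phi(\lambda t)\;\leq\;c\lambda\,\mu^+(A).
\end{equation}

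The next step is to optimise over $\lambda$. For $t\in(0,1/2]$, the choice $\lambda=1/t$ turns \eqref{plan-iso} into
\[
\bigl(\log(1+1/t)\bigr)^\beta-(\log 2)^\beta\;\leq\;\frac{c}{t}\,\mu^+(A).
\]
The left-hand side, divided by $(\log(1/t))^\beta$, tends to $1$ as $t\downarrow 0$ and is strictly positive at $t=1/2$, so by continuity it is bounded below by some $c'>0$ on all of $(0,1/2]$. Hence $t(\log(1/t))^\beta\leq(c/c')\mu^+(A)$, which is exactly $G(t)\lesssim\mu^+(A)$ in the notation of \eqref{qequivalence} with $q=1/\beta$. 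For $t\in(1/2,1)$, I would run the same argument with $f_\varepsilon$ replaced by $1-f_\varepsilon$ (whose modulus of gradient coincides with $|\nabla f_\varepsilon|$) and $\lambda=1/(1-t)$, to arrive at $G(1-t)\lesssim\mu^+(A)$. Combining both ranges and invoking \eqref{qequivalence} yields $\mathcal{U}_q(\mu(A))\leq\tilde c\,\mu^+(A)$, and taking the infimum over $A$ of measure $t$ gives $\mathcal{I}_\mu\geq\mathcal{U}_q/\tilde c$.

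The main obstacle is the passage $\varepsilon\downarrow 0$: one must justify that the entropy term on the left converges to $\lambda\Phi(1)t-\Phi(\lambda t)$ and that $\mu(A^\varepsilon\setminus A)/\varepsilon$ actually achieves $\mu^+(A)$ in the liminf. The first is routine dominated convergence once one restricts to sets with $\mu$-negligible boundary (a harmless reduction by non-atomicity); the second is the very definition of the surface measure. The remaining work -- the explicit optimisation in $\lambda$ and the appeal to the profile equivalence \eqref{qequivalence} -- is elementary.
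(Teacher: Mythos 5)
Your strategy is the same as the paper's: feed a Lipschitz approximation of $\mathbf{1}_A$ into the rescaled (homogeneous) form of the entropy inequality, treat $t\le \tfrac12$ directly and $t>\tfrac12$ via $1-f$, bound the resulting quantity below by $G(t)=t(\log(1/t))^\beta$, and invoke \eqref{qequivalence}. Two steps, however, need repair. First, the limit of $\mathbf{Ent}_\mu^\Phi(\lambda f_\varepsilon)$ is not $\lambda\Phi(1)t-\Phi(\lambda t)$ but $\Phi(\lambda)t-\Phi(\lambda t)$: since $\Phi$ is not homogeneous, $\mu\Phi(\lambda\mathbf{1}_A)=\Phi(\lambda)\mu(A)=\lambda t\left(\log(1+\lambda)\right)^\beta$. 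As written, your inequality $\lambda\Phi(1)t-\Phi(\lambda t)\le c\lambda\mu^+(A)$ has left-hand side identically $0$ at $\lambda=1/t$, so the displayed consequence $\left(\log(1+1/t)\right)^\beta-(\log 2)^\beta\le \frac{c}{t}\mu^+(A)$ does not follow from it; it does follow from the corrected limit, after which your optimisation (including the compactness argument you use in place of the paper's explicit constant $\eta=(\log 3/\log 2)^\beta-1$) goes through.

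Second, the reduction to sets with $\mu(\partial A)=0$ is not ``harmless by density'': an isoperimetric inequality established on a dense family of sets does not extend to all Borel sets, because $\mu^+$ is not continuous under set approximation, and non-atomicity of $\mu$ gives no control on $\mu(\partial A)$. In addition, the pointwise bound $|\nabla f_\varepsilon|\le \varepsilon^{-1}\mathbf{1}_{A^\varepsilon\setminus A}$ can fail on $\partial A$ and on $\partial(A^\varepsilon)$, where $f_\varepsilon$ is constant but the metric gradient \eqref{metricgrad} need not vanish. The correct resolution --- which is precisely the content of Lemma 3.5 of \cite{bobkovhoudre}, cited by the paper at this point --- is to note that one may assume $\mu^+(A)<\infty$ (otherwise there is nothing to prove), in which case $\bar A\setminus A\subset A^\varepsilon\setminus A$ for every $\varepsilon>0$ forces $\mu(\bar A\setminus A)=0$, so that $f_\varepsilon\to\mathbf{1}_A$ $\mu$-a.e.\ and, after a small additional enlargement argument, $\limsup_\varepsilon \mu|\nabla f_\varepsilon|\le\mu^+(A)$. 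With these two repairs your argument coincides with the paper's proof.
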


\proof When applied to a nonnegative function \(f\) such that \(\mu f=1\), the $L_1\Phi$-Entropy inequality becomes 
\begin{equation*}
\mu\left( f\left((\log(1+f))^\beta-(\log 2)^\beta\right)\right)\le c \mu \vert
\nabla f \vert, 
\end{equation*}
which implies that for all non-negative \(f\) (not identically 0) we have  
\begin{align}
\label{homogeneous phi entropy}
\mu\left( f\left(\left(\log\left(1+\frac{f}{\mu f}\right)\right)^\beta-(\log 2)^\beta\right)\right)\le c \mu \vert
\nabla f \vert. 
\end{align}
Let $A$ be a Borel set with measure $t=\mu(A)$.  To start with, suppose that $t\in\left[0, \frac{1}{2}\right]$.  We can approximate the indicator function of $A$ by a sequence of Lipschitz
functions \((f_{n})_{n\in\mathbb{N}} \) satisfying 
\[
\limsup_{n \rightarrow \infty} \mu \vert
\nabla f_n \vert \le \mu^+(A)
\]
(see \cite{bobkovhoudre}, Lemma 3.5). Taking $f_n$ in \eqref{homogeneous phi entropy} and passing to the limit as $n\to\infty$ yields
 \begin{equation}
t\left( \left( \log \left(1+ \frac{1}{t} \right) \right)^\beta  - \left( \log 2 \right)^\beta \right) \le c\mu^{+}(A).
\end{equation} 
We now  observe that for \(t \in \left[0,\frac{1}{2}\right] \) we have 
\begin{equation}
\label{etaestimate}
\eta \left( \log \left( \frac{1}{t} \right) \right)^\beta \le \left( \log \left( 1+\frac{1}{t} \right) \right)^\beta - ( \log 2 )^\beta
\end{equation}
with \(\eta= \left( \frac{\log 3}{ \log 2} \right)^\beta-1>0\). This implies
\begin{equation}\label{preiso}
t \left( \log \left( \frac{1}{t} \right) \right)^\beta \le \frac{c}{\eta}\ \mu^+(A),
\end{equation}
for all $t\in\left[0, \frac{1}{2}\right]$. 
Thus, by the equivalence relation \eqref{qequivalence}, we have that
\begin{equation}
\label{0tohalf}
\mathcal{U}_q(t) \le \tilde{c}\ \mu^+(A)
\end{equation}
for all $t\in[0,\frac{1}{2}]$, with \(\tilde{c}=\frac{c}{\eta}L_q\).

Now suppose that $t= \mu(A)\in\left(\frac{1}{2}, 1\right]$.  For functions $f\in [0, 1]$, we can apply \eqref{homogeneous phi entropy} to $1-f$, which yields
\begin{equation*}
\mu\left( (1-f)\left(\left(\log\left(1+\frac{1-f}{1-\mu f}\right)\right)^\beta-(\log 2)^\beta\right)\right)\le c\ \mu \vert
\nabla f \vert. 
\end{equation*}
If we now take $f_n$ in this inequality (where $(f_n)_{n\in\mathbb{N}}$ is again the Lipschitz approximation of the characteristic function of $A$) and pass to the limit as $n\to\infty$, we see that
 \begin{equation*}
(1-t)\left( \left( \log \left(1+ \frac{1}{1-t} \right) \right)^\beta  - \left( \log 2 \right)^\beta \right) \le c\ \mu^{+}(A).
\end{equation*}
Writing \(s=1-t \in \left[0,\frac{1}{2} \right)\) and using \eqref{etaestimate} now gives 
\begin{equation}
s \left( \log \left( \frac{1}{s} \right) \right)^\beta \le \frac{c}{\eta}\ \mu^+(A).
\end{equation} 
Thus by \eqref{qequivalence} again, we have $\mathcal{U}_q(1-t) = \mathcal{U}_q(s) \le \tilde{c} \mu^+(A)$ for all $t\in\left(\frac{1}{2}, 1\right]$ with $\tilde{c} = \frac{c}{\eta}L_q$.  By symmetry of \(\U_q\) therefore $\mathcal{U}_q(t) \le \tilde{c} \mu^+(A)$ for $t\in\left(\frac{1}{2}, 1\right]$, which combined with \eqref{0tohalf} yields the result.
\endproof

An important corollary of this result is the following:

\begin{corollary} \label{cor3.2}
\label{phi-ent to cheeger}
Assume that the $L_1\Phi$-entropy inequality 
\[
\mathbf{Ent}_\mu^\Phi(|f|) \leq c\ \mu|\nabla f|
\]
holds for some constant $c\in(0, \infty)$ and all locally Lipschitz
functions \(f\), where $\Phi(x) = x\left(\log(1+x)\right)^\beta$ and $\beta\in(0,1]$. Then there exists a constant $c_0$ such that
\begin{equation}
\label{cheeger01}
\mu|f-\mu f| \leq c_0 \mu|\nabla f|.
\end{equation}
\end{corollary}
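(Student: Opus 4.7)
The plan is to deduce the Cheeger-type inequality from the isoperimetric information already available in Theorem \ref{isoq} in three stages: first replace the profile $\mathcal{U}_q$ by a linear one, then pass from sets to functions via the co-area formula applied around a median, and finally replace the median by the mean.

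For the first stage I would establish that $\mathcal{U}_q(t)\geq \kappa\min(t,1-t)$ for some $\kappa>0$. By the equivalence \eqref{qequivalence} it suffices to bound $G(s)=s(\log(1/s))^{1/q}$ from below by a multiple of $s$ on $(0,1/2]$. On $(0,1/e]$ one has $\log(1/s)\geq 1$ and hence $G(s)\geq s$, while on $[1/e,1/2]$ the ratio $G(s)/s=(\log(1/s))^{1/q}$ is bounded below by $(\log 2)^{1/q}>0$. Plugging the resulting linear bound into Theorem \ref{isoq} yields the classical Cheeger-type set inequality
\[
\min\bigl(\mu(A),1-\mu(A)\bigr)\leq \frac{\tilde c}{\kappa}\,\mu^+(A)
\]
for every Borel set $A\subset\R^N$.

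For the second stage, let $m$ be a median of $f$, so that $\mu(\{f>m\})\leq 1/2$ and $\mu(\{f<m\})\leq 1/2$. The co-area formula $\mu|\nabla g|=\int_0^\infty\mu^+(\{g>s\})\,ds$, valid for non-negative locally Lipschitz $g$ in the metric framework determined by \eqref{metricgrad}, together with the layer-cake identity $\mu(f-m)_+=\int_0^\infty\mu(\{f>m+s\})\,ds$ and the previous set-level bound applied to each superlevel set (each of measure at most $1/2$), gives
\[
\mu(f-m)_+\leq \frac{\tilde c}{\kappa}\,\mu\bigl|\nabla(f-m)_+\bigr|.
\]
An identical argument produces the analogous bound for $(f-m)_-$; summing the two and using $|\nabla(f-m)_+|+|\nabla(f-m)_-|=|\nabla f|$ almost everywhere yields $\mu|f-m|\leq(\tilde c/\kappa)\,\mu|\nabla f|$.

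Finally, the elementary bound $|\mu f-m|\leq\mu|f-m|$ gives $\mu|f-\mu f|\leq 2\mu|f-m|$, so \eqref{cheeger01} follows with $c_0=2\tilde c/\kappa$. The only delicate ingredient is the co-area formula for the sub-gradient $|\nabla f|$ defined via \eqref{metricgrad}; however, this is standard in the metric-measure-space setting in force throughout the paper, and is consistent with the Lipschitz approximation of indicator functions (cf.\ \cite{bobkovhoudre}, Lemma 3.5) already invoked in the proof of Theorem \ref{isoq}.
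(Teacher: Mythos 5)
Your proof is correct and follows the same overall route as the paper: both arguments lower-bound $\mathcal{U}_q(t)$ by a multiple of $\min(t,1-t)$ (the paper does this in one line, using $\log(1/\min(t,1-t))\geq\log 2$) and then invoke Theorem \ref{isoq} to obtain the Cheeger set inequality $\min(\mu(A),1-\mu(A))\leq C\,\mu^+(A)$. The only difference is in the final step: the paper simply cites \cite{bobkovhoudreams} for the equivalence of the set form and the functional form, whereas you carry out that standard argument explicitly via the co-area inequality, the layer-cake identity around a median, and the bound $\mu|f-\mu f|\leq 2\mu|f-m|$. Two small technicalities in your write-up deserve a remark, neither of which affects validity. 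First, in this metric setting \eqref{coarea} is only an inequality, $\mu|\nabla g|\geq\int\mu^+(\{g>s\})\,ds$, not an identity — but that is exactly the direction your argument uses. Second, the pointwise identity $|\nabla(f-m)_+|+|\nabla(f-m)_-|=|\nabla f|$ can fail on the level set $\{f=m\}$ (with the $\limsup$ definition \eqref{metricgrad} both terms there can equal $|\nabla f|$); one only has the sum bounded by $2|\nabla f|$, which merely doubles the final constant $c_0$.
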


\begin{proof}
We note that
if \(\beta=1/q,\)\[\mathcal{U}_q(t)
\ge \frac{1}{L_q}\min(t,1-t)\log\left(
\frac{1}{\min(t,1-t)}\right)^{1/q} \ge\frac{(\log2)^{1/q}}{L_q}\min(t,1-t).
\]
Thus by Theorem \ref{isoq}, we have that 
\[
\min(t,1-t) \leq \tilde{c}\frac{L_q}{(\log2)^\beta}\mu^+(A),
\]
for $t=\mu(A)$, which is Cheeger's isoperimetric inequality on sets. This is equivalent (up to a constant) to its functional form
\[ 
\mu \vert f-\mu f\vert \leq c_0\, \mu \vert \nabla f \vert
\]
(see for example \cite{bobkovhoudreams}).
\end{proof}

Following an argument of \cite{Led1}  we can pass from the isoperimetric statement above to inequality \eqref{defective1}. 
We note that in our general setting, the following coarea inequality
is available, (for a proof see e.g. \cite{bobkovhoudre}, Lemma 3.2),

\begin{equation}\label{coarea}
\mu \vert \nabla f \vert  \ge \int_{\R} \mu^+(\{f> s\})ds
\end{equation}
for locally Lipschitz functions \(f\).

\begin{proposition}\label{pro3.3}
\label{tightledouxprop} If the measure \(\mu\) satisfies an isoperimetric inequality
of the form \eqref{isoqineq}, then there exist  constants $K, K'>0$ such that
\begin{equation}\label{tightledoux} \mu \left( f ( \log_{+} f )^\beta \right) \le K\mu
\vert \nabla f \vert + K'
\end{equation}
for all positive locally Lipschitz functions \(f\) such that \(\mu (f)=1\), where \(\beta=\frac{1}{q}\). 
\end{proposition}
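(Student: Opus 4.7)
The plan is to combine the coarea inequality \eqref{coarea} with the isoperimetric inequality \eqref{isoqineq} applied to the level sets of $f$, and then reconstruct the left-hand side of \eqref{tightledoux} via a layer-cake decomposition. Set $\Psi(s) = s(\log_+ s)^\beta$ and $\rho(s)=\mu(\{f>s\})$. Since $\Psi(s)=0$ on $[0,1]$ and $\Psi$ is absolutely continuous and nondecreasing, the layer-cake identity reads
\begin{equation*}
\mu\!\left(f(\log_+ f)^\beta\right) = \int_1^\infty \Psi'(s)\,\rho(s)\,ds, \qquad \Psi'(s) = (\log s)^\beta + \beta(\log s)^{\beta-1}.
\end{equation*}

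Applying \eqref{isoqineq} to each level set $\{f>s\}$ and integrating in $s$ against the coarea bound \eqref{coarea} gives $\int_0^\infty \U_q(\rho(s))\,ds \le \tilde c\,\mu|\nabla f|$. The normalisation $\mu(f)=1$ now enters decisively through Markov's inequality: $\rho(s)\le 1/s$ for all $s>0$, so $\rho(s) \le 1/2$ whenever $s\ge 2$. On this range, the two-sided equivalence \eqref{qequivalence} combined with the consequence $\log(1/\rho(s)) \ge \log s$ of Markov's bound yields
\begin{equation*}
\U_q(\rho(s)) \ge \frac{1}{L_q}\,\rho(s)\bigl(\log s\bigr)^{1/q}.
\end{equation*}

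I would then split the layer-cake integral at $s=2$. On $[1,2]$ the integrand is dominated by $\Psi'(s)$ (since $\rho\le 1$); although $(\log s)^{\beta-1}$ is singular at $s=1$, its antiderivative $\Psi$ is finite, so $\int_1^2 \Psi'(s)\rho(s)\,ds \le \Psi(2)-\Psi(1)=2(\log 2)^\beta$, contributing the constant $K'$. On $[2,\infty)$, since $\beta\in(0,1]$, the two terms of $\Psi'$ are comparable, $\Psi'(s) \le C_\beta(\log s)^{1/q}$ for some $C_\beta>0$, and inserting the lower bound for $\U_q(\rho(s))$ derived above gives
\begin{equation*}
\int_2^\infty \Psi'(s)\,\rho(s)\,ds \le C_\beta L_q \int_2^\infty \U_q(\rho(s))\,ds \le C_\beta L_q \tilde c\,\mu|\nabla f|,
\end{equation*}
which yields \eqref{tightledoux} with $K = C_\beta L_q \tilde c$.

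The main subtlety is the coupling step: it is precisely the Markov bound $\rho(s)\le 1/s$, which uses $\mu(f)=1$ essentially, that lets us trade the $\rho$-logarithm built into $\U_q(\rho(s))$ for an $s$-logarithm matching the Orlicz factor $(\log_+ f)^\beta$ on the left. Once this pointwise translation is secured and the mild endpoint behaviour at $s=1$ is absorbed into the additive constant $K'$, the two sides of \eqref{tightledoux} line up and the remaining bookkeeping is routine.
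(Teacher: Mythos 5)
Your proof is correct and follows essentially the same route as the paper's: coarea inequality plus the isoperimetric bound on level sets, Markov's inequality to convert $\log(1/\rho(s))$ into $\log_+ s$, and a layer-cake reassembly of $\mu(f(\log_+ f)^\beta)$ with the normalisation $\mu(f)=1$ absorbing the additive constant. Your splitting at $s=2$, where Markov guarantees $\rho(s)\le \tfrac12$ so that $\min(\rho,1-\rho)=\rho$ in \eqref{qequivalence}, is in fact a tidier way of handling the point the paper addresses with its integral estimate involving $M=\sup_{t\in(1/2,1)}t(\log\tfrac1t)^\beta$, but the argument is the same in substance.
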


\proof
Let \(f\) be non-negative, with $\mu(f)=1$. The coarea inequality \eqref{coarea} together with our assumption imply 
\begin{align*}
\mu \vert \nabla f \vert &\ge \int_{\R} \mu^+(\{f> s\})ds \ge\frac{1}{\tilde{c}} \int_{\R} \mathcal{U}_q(\mu(\{f>s\}))ds
\end{align*}
Let us note that \begin{align*}
\int_0^1\min(t,1-t) \left( \log \frac{1}{\min(t,1-t)} \right)^\beta dt&=2\int_0^{1/2}t
\left( \log \frac{1}{t} \right)^\beta dt \\ & \ge 2\int _0^1 t 
\left( \log \frac{1}{t} \right)^\beta dt - M
\end{align*}
where \(M= \sup _{t \in \left(\frac{1}{2},1\right)}t
\left( \log \frac{1}{t} \right)^\beta \). By \eqref{qequivalence}, we conclude that \[ \mu \vert \nabla f \vert  \ge   K \int_{\R} \mu(\{f > s\}) \log \left(
\frac{1}{\mu(\{f > s\})}\right)^\beta ds - \frac{MK}{2}
\]
with $K\equiv\frac{2}{\tilde{c}L_{q}}$.
By Markov'’s inequality, $ \mu(\{f > s\})\leq \frac1s$.
Therefore, when $s\geq 1$ we have
\[\log\frac{1}{\mu(\{f > s\})}\geq \log s\] 
and we always have $\log \frac{1}{\mu(\{f > s\})} \geq 0$.
Therefore, \( \log\frac{1}{\mu(\{f > s\})}\ge\log_+s,  \) which implies 
\[ \mu \vert \nabla f \vert  \ge K   \int_{\R} 
\left( \log_+   s \right)^\beta \mu(\{f > s\})  ds- \frac{MK}{2} \ge K  \mu \left( f \left( \log_+   f \right)^\beta\right) - K'
\]
with some constant $K'\in(0,\infty).$
\endproof

\begin{rem}
With the above results, we have thus shown the equivalence of the $L_1 \Phi$-entropy inequality with the isoperimetric inequality \eqref{isoqineq} and with inequality \eqref{tightledoux} together with the Cheeger inequality \eqref{cheeger01}; see Theorem \ref{summary} below.
\end{rem}

\begin{rem}
When \(\frac{1}{\beta}=q=2\), the function \(\mathcal{U}_2\) represents the
Gaussian isoperimetric function. In this case, the isoperimetric inequality
\eqref{isoqineq} is known to be equivalent to the  following inequalities introduced by Bobkov in \cite{bobkovifi} and \cite{bobkovcube}:\begin{align}\label{1}&
\;\mathcal{U}_{2}(\mu(f)) \le \mu\left( \mathcal{U}_2 (f)+ \tilde{c}\vert\nabla f \vert \right) \\ \label{2}&\;\mathcal{U}_{2}(\mu(f)) \le \mu \left( \sqrt{\mathcal{U}_2 (f)^2+ \tilde{c}^2\vert \nabla f \vert^2} \right)
\end{align} 
for all locally Lipschitz \(f:\R \rightarrow [0,1]\). The equivalence of
these inequalities in this case follows by a transportation argument which uses the fact that the standard Gaussian
measure \(\gamma\) on \(\R\) satisfies \eqref{1} and \eqref{2} with \(\tilde{c}=1\)
(see \cite{barthemaurey}, Proposition 5).  \end{rem}

\begin{rem} Suppose that the measure \(\mu\) satisfies an \(L_1\Phi\)-entropy
inequality on a metric space \((\mathcal{M},d)\). Suppose that on the product space \((\mathcal{M}^{n},d_{n},\mu^{\otimes
n})\)
we have \(
\vert \nabla f \vert= \sum_{i=1}^n \vert \nabla_i f \vert
\), where \( \nabla_i\) denotes differentiation with respect to the \(i^{th}\)
coordinate and where the moduli of the gradients are defined via \eqref{metricgrad} with the supremum distance. The tensorisation property of the \(L_1\Phi\)-entropy (Lemma \ref{tensorisation})
then allows us to obtain
isoperimetric information on the product space (where the surface measure
is now defined with respect to supremum distance). This problem was considered
in \cite{Barthesupremum}.  \end{rem}

\section{Consequences of $L_1\Phi$-entropy inequalities}
\label{Consequences of L^1 Phi-entropy inequalities}

In this section we look at some consequences of the $L_1\Phi$-entropy inequality
\begin{equation}
 \label{beta LS 2}
 \Ent^\Phi_\mu(|f|) \leq c \mu|\nabla f|,
\end{equation}
with $\Phi(x) = x(\log(1+x))^\beta$, $\beta\in(0,1]$, for a general probability measure $\mu$.  The first result shows that this inequality implies a $q$-logarithmic Sobolev inequality, as studied in \cite{B-Z} and \cite{H-Z}.

\begin{theorem}\label{thm2.1}
\label{Phi-entropy to LSq}
Let $\mu$ be an arbitrary probability measure which satisfies the $L_1\Phi$-entropy inequality \eqref{beta LS 2} for some $\beta\in[\frac{1}{2},1]$ and set $q = \frac{1}{\beta}\in[1,2]$.  Then there exists a constant $c_q$ such that the following $(LS_q)$ inequality holds
\begin{equation}
\label{defective LSq}
\mu\left(|f|^q\log\frac{|f|^q}{\mu|f|^q}\right) \leq c_q \mu|\nabla f|^q.
\end{equation}

\end{theorem}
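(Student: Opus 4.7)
The plan is to apply the $L_1\Phi$-entropy inequality directly to the test function $g=|f|^q$, which by scaling produces $\mu|\nabla f|^q$ on the right-hand side after Hölder. Without loss of generality assume $f\ge 0$ and normalise $\mu f^q=1$. Writing $g=f^q$, inequality \eqref{beta LS 2} gives
\[
\mu\bigl(g(\log(1+g))^\beta\bigr)-(\log 2)^\beta\;\le\;c\,\mu|\nabla g|\;=\;cq\,\mu\bigl(f^{q-1}|\nabla f|\bigr).
\]
Hölder's inequality with conjugate exponents $q$ and $q/(q-1)$ yields $\mu(f^{q-1}|\nabla f|)\le (\mu f^q)^{(q-1)/q}(\mu|\nabla f|^q)^{1/q}=(\mu|\nabla f|^q)^{1/q}$. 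Raising the resulting bound to the power $q=1/\beta$ (valid since $q\ge 1$) and using $(a+b)^q\le 2^{q-1}(a^q+b^q)$ gives a defective form
\[
\bigl(\mu(g(\log(1+g))^\beta)\bigr)^q\;\le\;C_1\,\mu|\nabla f|^q+C_2.
\]

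Next I would convert the $\Phi$-entropy quantity on the left into the $LS_q$ entropy $\mu(g\log g)$. The crucial input is the algebraic identity $q\beta=1$, which makes $((\log(1+x))^\beta)^q=\log(1+x)$. Applying Jensen's inequality for the convex function $y\mapsto y^q$ with the probability measure $g\,d\mu$ (valid because $\mu g=1$) relates the two quantities, and the elementary bound $\mu(g\log(1+g))\le\mu(g\log g)+1$ handles the difference between $\log$ and $\log(1+\cdot)$. After a Rothaus-type tightening argument parallel to the one in the proof of Theorem \ref{tighten}, using the Cheeger inequality provided by Corollary \ref{phi-ent to cheeger} to absorb the remaining additive constant, one arrives at the claimed $LS_q$ inequality. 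The extension to arbitrary $f$ follows from the pointwise bound $|\nabla |f||\le|\nabla f|$ applied separately to positive and negative parts.

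The main obstacle is the convex-analytic step linking $\mu(g\log g)$ to $\bigl(\mu(g(\log(1+g))^\beta)\bigr)^q$: the natural direction of Jensen's inequality bounds the latter above by $\mu(g\log(1+g))$ and hence above by $\mu(g\log g)+1$, which is the \emph{opposite} direction to what is needed here. Overcoming this requires either a more delicate estimate exploiting the specific asymptotic behaviour of $\Phi$ on $\{g\gg 1\}$ (where $\Phi(x)\sim x(\log x)^\beta$) combined with a truncation argument on $\{g\le 1\}$, or a detour through the isoperimetric inequality of Theorem \ref{isoq} followed by a generalised coarea-type argument adapted to the sub-gradient power $|\nabla f|^q$, in the spirit of Proposition \ref{tightledouxprop}. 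The restriction $\beta\in[\tfrac12,1]$ (equivalently $q\in[1,2]$) enters precisely here, since it is exactly the range where the Hölder/Jensen manipulations can be pushed through without losing the correct power.
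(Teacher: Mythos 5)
There is a genuine gap, and it is precisely the one you flag yourself at the end: the step linking $\mu(g\log g)$ to $\bigl(\mu(g(\log(1+g))^\beta)\bigr)^q$ cannot be made to work in the direction you need. Applying the $L_1\Phi$-entropy inequality to $g=f^q$ only controls $\mu\bigl(f^q(\log(1+f^q))^\beta\bigr)$, which for large $f$ is genuinely weaker than the $LS_q$ entropy $\mu(f^q\log f^q)$ (the former can be finite while the latter is infinite), and raising the scalar bound to the power $q$ puts the exponent outside the integral, where Jensen's inequality for $y\mapsto y^q$ with respect to the probability measure $g\,d\mu$ gives $\bigl(\mu(g(\log(1+g))^\beta)\bigr)^q\le\mu(g\log(1+g))$ --- an upper bound on the quantity you control by the quantity you want to control. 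No reverse inequality is available without further input. Your two suggested escape routes do not close this: a truncation on $\{g\le 1\}$ does not touch the problem, which lives on $\{g\gg 1\}$, and the coarea detour of Proposition \ref{tightledouxprop} is intrinsically an $L_1$ device --- it produces $\mu|\nabla f|$ on the right-hand side, not $\mu|\nabla f|^q$.

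The paper supplies the missing idea by choosing a different test function: it applies the homogeneous $\Phi$-entropy inequality not to $f^q$ but to $g=f\bigl(1+\log(1+f)\bigr)^{1-\beta}$ (with $\mu f=1$). The prefactor $(1+\log(1+f))^{1-\beta}$ combines \emph{pointwise inside the integral} with the factor $\bigl[\log(1+g/\mu g)\bigr]^\beta$ produced by the inequality, and since $(1-\beta)+\beta=1$ this reconstructs a full logarithm, yielding $\mu\bigl(f\log(1+f/\mu g)\bigr)$ on the left. The price is an extra gradient term $\mu\bigl((1+\log(1+f))^{1-\beta}|\nabla f|\bigr)$, which after substituting $f\to f^q$ is absorbed by Young's inequality with exponents $p,q$ into a small multiple of the entropy plus $\mu|\nabla f|^q$; the restriction $\beta\ge\tfrac12$ is used there to bound $\mu(g)$ by another application of the $\Phi$-entropy inequality. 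Your final tightening step (Cheeger from Corollary \ref{phi-ent to cheeger} plus a Rothaus argument) does match the paper's conclusion, but without the modified test function the defective $LS_q$ inequality is never reached.
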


\begin{proof}
Without loss of generality we assume that $f\geq0$.  Applying  $L_1\Phi$-entropy inequality \eqref{beta LS 2} to the function $f/\mu f$, we obtain the following homogeneous version
\begin{equation}
\label{hom Phi-ent}
\mu\left(f\left[\log\left(1+\frac{f}{\mu f}\right)\right]^\beta\right) \leq c\mu|\nabla f| + (\log 2)^\beta\mu(f).
\end{equation}
We apply this inequality to the function $g = f\left(1 + \log(1+ f)\right)^{1-\beta}\geq f\geq0$, where $f$ is such that $\mu(f) =1$.  Note that $\mu(g)\geq1$.  Then we have
\begin{align*}
\mu\left(g\left[\log\left(1+\frac{g}{\mu g}\right)\right]^\beta\right) 
&=   \mu\left( f\left(1 + \log(1+ f)\right)^{1-\beta}
\left[
\log\left(1+ \frac{g}{\mu g}\right)\right]^\beta \right) \\
& \geq   \mu\left( f\left(1 + \log(1+ f)\right)^{1-\beta}\left[\log\left(1+ \frac{f}{\mu g}\right)\right]^\beta\right) \\
& \geq   \mu\left( f\left(1 + \log\left(1+ \frac{f}{\mu g}\right)\right)^{1-\beta}\left[\log\left(1+ \frac{f}{\mu g}\right)\right]^\beta\right) \\
&\geq \mu\left( f\log\left(1+ \frac{f}{\mu g}\right)\right) 
 = \mu\left( f\log (\mu g + f)\right) - \log\mu(g)\\
&\geq \mu\left( f\log (1+f)\right) - \mu(g).
\end{align*}
Thus for all $f\geq0$ with $\mu(f)=1$,
\begin{align}
\label{normalised}
 \mu\left( f\log (1+f)\right) &\leq c\mu\left|\nabla\left(f\left(1 + \log(1+ f)\right)^{1-\beta}\right)\right| + \left((\log2)^\beta + 1\right)\mu(g)\nonumber\\
 &\leq c\mu\left(\left(1 + \log(1+ f)\right)^{1-\beta}|\nabla f|\right) \nonumber\\
 &\quad + c(1-\beta)\mu\left(\frac{f}{\left(1+ \log(1+f)\right)^\beta}\frac{1}{1+f}|\nabla f|\right) + \left((\log2)^\beta + 1\right)\mu(g)\nonumber\\
 &\leq c\mu\left(\left(1 + \log(1+ f)\right)^{1-\beta}|\nabla f|\right) + c(1-\beta)\mu|\nabla f|\nonumber\\
 &\qquad + \left((\log2)^\beta + 1\right)\mu(g).
 \end{align}

Since we have assumed $\beta\geq\frac{1}{2}$, we have $1-\beta\leq \beta$ and hence
 \begin{align*}
 \mu(g) = \mu\left(f\left(1+\log(1+f)\right)^{1-\beta}\right) &\leq 1 + \mu(f[\log(1+ f)]^{1-\beta})\\
 &\leq  \mu(f[\log(1+ f)]^{\beta})+2\\
 &\leq c\mu|\nabla f| + (\log 2)^\beta + 2
 \end{align*}
by another application of the $L_1\Phi$-entropy inequality \eqref{hom Phi-ent} in the last step.
Using this in \eqref{normalised}, we see that for general $f\geq 0$,
\begin{align}
\label{general f}
\mu\left( f\log\left(1+\frac{f}{\mu f}\right)\right) &\leq c\mu\left(\left(1 + \log\left(1+ \frac{f}{\mu f}\right)\right)^{1-\beta}|\nabla f|\right)\nonumber \\
 &\quad + c(2-\beta+(\log2)^\beta)\mu|\nabla f| + \left((\log 2)^\beta+2\right)^2\mu(f).
 \end{align}
Replacing $f$ by $f^q$ with $q=\frac{1}{\beta}$ in the above yields
\begin{align*}
\mu\left( f^q\log\left(1+\frac{f^q}{\mu f^q}\right)\right) &\leq qc\mu\left(\left(1 + \log\left(1+ \frac{f^q}{\mu f^q}\right)\right)^{1-\beta}f^{q-1}|\nabla f|\right)\nonumber \\
&\quad + cq(2-\beta + (\log2)^\beta)\mu\left(f^{q-1}|\nabla f|\right) 
+ \left((\log 2)^\beta+2\right)^2\mu(f^q)\\
&\leq \frac{qc\varepsilon^{p-1}}{p}\mu\left(f^q\left(1 + \log\left(1+ \frac{f^q}{\mu f^q}\right)\right)\right)\\
&\quad + \left(\frac{c}{\varepsilon} + c(2-\beta+ (\log2)^\beta )\right)\mu|\nabla f|^q\\
&\quad + \left(\frac{cq}{p}(2-\beta+ (\log2)^\beta) + \left((\log 2)^\beta + 2\right)^2\right)\mu(f^q) 
\end{align*}
where  $\varepsilon>0$ and we have applied Young's inequality  with indices $\frac{1}{p}+\frac{1}{q}=1$. 
Choosing $qc\varepsilon^{p-1}/p<1$, we can simplify this bound as follows
\[
\mu\left( f^q\log\left(1+ \frac{f^q}{\mu f^q}\right)\right) \leq C' \mu|\nabla f|^q + D'\mu(f^q)
 \]
 where
\[
C' = \frac{\frac{c}{\varepsilon} + c(2-\beta+ (\log2)^\beta )}{1-\frac{qc\varepsilon^{p-1}}{p}}, \qquad D' = \frac{\frac{cq}{p}(2-\beta+ (\log2)^\beta) + \left((\log 2)^\beta +2\right)^2 }{1-\frac{qc\varepsilon^{p-1}}{p}}.
 \]
From this one obtains the defective $LS_q$,   which for all $f\geq0$ such that $\mu(f^q)=1$ can be equivalently represented as
 \begin{equation}
 \label{normalisation2}
 \mu\left(f^q\log f^q\right) \leq C'\mu|\nabla f|^q + D'.
 \end{equation} 
Let us now recall that by Corollary \ref{phi-ent to cheeger}, our assumption
implies that there exists a constant $c_0$ such that
\begin{displaymath}
\mu \vert f-\mu f \vert  \le c_0 \mu \vert \nabla f \vert.
\end{displaymath}
From this inequality we can use the arguments of \cite{B-Z} (Chapter 2)  to deduce that there exists a constant $c_q$ such that
\[
\mu \vert f-\mu f \vert^q  \le c_q \mu \vert \nabla f \vert^q.
\]
Finally, by Rothaus type arguments (see \cite{B-Z} Chapter 3), we can then remove the defective term in
 \eqref{normalisation2} to arrive at the result.\end{proof}

Theorem \ref{Phi-entropy to LSq} has a number of corollaries, which follow from known results about the $q$-logarithmic Sobolev inequality $(LS_q)$ contained in \cite{B-Z} and \cite{H-Z}.  We mention here the following one, which is important for our purposes.

\begin{corollary}\label{cor2.4}
\label{exponential bounds}
Let $\mu$ be an arbitrary probability measure which satisfies the $L_1\Phi$-entropy inequality \eqref{beta LS 2} with $\beta\in[\frac{1}{2},1]$.  Suppose $f$ is a locally Lipschitz function such that 
\begin{equation}
\label{gradient condition}
|\nabla f|^q \leq af + b
\end{equation}
with $q = \frac{1}{\beta}$, for some constants $a, b\in[0,\infty)$.  Then for all $t>0$ sufficiently small
\[
\mu\left(e^{tf}\right) <\infty.
\]
\end{corollary}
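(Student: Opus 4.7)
The plan is to apply a Herbst-style argument to the $q$-logarithmic Sobolev inequality supplied by Theorem \ref{Phi-entropy to LSq}. Since $\beta \in [\tfrac{1}{2}, 1]$, that theorem provides a constant $c_q > 0$ such that
\[
\mu\!\left( |g|^q \log \frac{|g|^q}{\mu|g|^q}\right) \leq c_q \mu|\nabla g|^q
\]
for every locally Lipschitz $g$, with $q = 1/\beta \in [1,2]$. This is the only inequality I intend to use from the hypothesis; the gradient assumption \eqref{gradient condition} will be used to close the calculation.

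The key step is to test $LS_q$ against $g = e^{tf/q}$ for small $t > 0$. Writing $Z(t) := \mu(e^{tf})$ and formally differentiating, one has $\mu g^q = Z(t)$, $\mu(g^q \log g^q) = t\mu(f e^{tf}) = tZ'(t)$, and $|\nabla g|^q = (t/q)^q e^{tf}|\nabla f|^q$. Inserting these and invoking the gradient bound \eqref{gradient condition} gives
\[
tZ'(t) - Z(t)\log Z(t) \leq c_q\left(\tfrac{t}{q}\right)^q\bigl( a\, Z'(t) + b\, Z(t)\bigr).
\]
Setting $H(t) := \log Z(t)$ (so $H(0) = 0$), this rearranges to the first-order differential inequality
\[
\Bigl(t - c_q a\bigl(\tfrac{t}{q}\bigr)^q\Bigr) H'(t) \leq H(t) + c_q b\left(\tfrac{t}{q}\right)^q.
\]
For $t$ in a sufficiently small interval $(0, t_0)$ the coefficient on the left is strictly positive, and a standard comparison with the associated linear ODE (of the form $tH'(t) = H(t) + O(t^{q})$, whose solutions grow like $t^q/(q-1)$ at the origin when $q > 1$, and linearly when $q = 1$) yields a finite upper bound on $H(t)$ for $t \in (0, t_0)$, giving the desired conclusion $\mu(e^{tf}) < \infty$.

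The main technical obstacle is that the formal manipulation above presupposes that $Z(t)$, $Z'(t)$ and $\mu(e^{tf}|\nabla f|^q)$ are already finite, which is essentially what we are trying to prove. I would circumvent this in the usual way by first applying the argument to the truncations $f_n := f \wedge n$, which are bounded (hence $\mu(e^{tf_n}) < \infty$ trivially) and still satisfy $|\nabla f_n|^q \leq |\nabla f|^q \leq a f + b \leq a f_n + (b + a n_-)$ on the set where $f_n = f$, with $|\nabla f_n| = 0$ elsewhere. Applying the above analysis to $f_n$ produces an upper bound on $\mu(e^{tf_n})$ depending on $a,b,c_q$ but not on $n$, and monotone convergence as $n \to \infty$ yields $\mu(e^{tf}) < \infty$ for $t \in (0, t_0)$.
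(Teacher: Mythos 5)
Your route---run a Herbst iteration on the $LS_q$ inequality supplied by Theorem \ref{Phi-entropy to LSq}, with a truncation $f_n=f\wedge n$ to justify the formal manipulations---is essentially a reconstruction of the result the paper invokes as a black box (its entire proof is a citation of Theorem 4.5 of \cite{H-Z}), so the strategy is the intended one and the core computation leading to $\bigl(t-c_qa(t/q)^q\bigr)H_n'(t)\leq H_n(t)+c_qb(t/q)^q$ is correct. However, there are two genuine gaps. The first is the initial condition for the comparison: since $H_n$ is convex with $H_n(0)=0$, one has $H_n(t)\geq t\,\mu f_n$, so no bound on $\mu(e^{tf_n})$ uniform in $n$ is possible without first proving $\sup_n\mu f_n<\infty$, i.e.\ $\mu f^{+}<\infty$, which is not among the hypotheses. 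This must be established before the ODE argument can start; for $q>1$ it follows from the Cheeger inequality of Corollary \ref{phi-ent to cheeger} applied to $f_n$ together with Jensen's inequality, $\mu|f_n-\mu f_n|\leq c_0\,\mu|\nabla f_n|\leq c_0\bigl(a\,\mu f_n+b\bigr)^{1/q}$, which forces $\mu f_n$ to remain bounded. You should add this step explicitly.

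The second gap is your claim that the coefficient $t-c_qa(t/q)^q$ is strictly positive on a small interval $(0,t_0)$. This fails at the endpoint $q=1$ (i.e.\ $\beta=1$), where the coefficient equals $t(1-c_1a)$ and is nonpositive for every $t>0$ as soon as $c_1a\geq1$; the same degeneracy breaks the Jensen argument above. This is not a removable defect of your method: on $\R$ with $U(x)=e^{|x|}$ for $|x|\geq1$ the hypotheses with $\beta=1$ are verified via Theorem \ref{main}, yet $f(x)=e^{a|x|}$ satisfies $|\nabla f|\leq af+b$ while $\mu(e^{tf})=\infty$ for every $t>0$ once $a>1$. So your proof (and the statement itself) is only valid for $q\in(1,2]$, or for $q=1$ under a smallness condition such as $c_1a<1$; you should state that restriction rather than assert positivity of the coefficient throughout $q\in[1,2]$. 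The restriction is harmless for the paper's purposes, since the corollary is only ever applied through Lemma \ref{relative Phi-entropy lem} and Theorem \ref{converse}, where the relevant smallness can be arranged by scaling.
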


\begin{proof}
Follows from Theorem 4.5 of \cite{H-Z}.
\end{proof}

In Section \ref{L_1 entropy inequalities from U-bounds} we proved that, under some conditions, if $d\mu = \frac{e^{-U}}{Z} d\lambda$ is a probability measure which satisfies a Cheeger type inequality of the form \eqref{cheeger}, and a $U$-bound of the form
\begin{equation}
\label{beta U-bound}
 \mu\left(|f|\left[|U|^\beta + |\nabla U|\right]\right) \leq A\mu|\nabla f| + B\mu|f|,
 \end{equation}
then the $L_1\Phi$-entropy inequality \eqref{beta LS 2}  holds.
 
We now aim to show the converse i.e. that under some weak conditions, the $L_1\Phi$-entropy inequality \eqref{beta LS 2} implies a bound of the form \eqref{beta U-bound}.  We first prove the following useful lemma.
 
\begin{lemma}\label{lem2.5}
 \label{relative Phi-entropy lem}
Let $\mu$ be a probability measure. Then 
\begin{equation}
\label{relative Phi-entropy}
\mu(fh) \leq s^{-1} \mathbf{Ent}_\mu^\Phi(f) + s^{-1}\Theta(sh)
\end{equation}
for all $s>0$ and suitable functions $f, h\geq0$ such that $\mu(f)=1$, where $\Phi(x) = x\left(\log(1+x)\right)^\beta, \beta\equiv \frac1q\in(0,1]$ and
\[
\Theta(h) \equiv  \left(\theta+(\log 2)^\beta + \left(\log \mu e^{ h^q}\right)^\beta \right)
\]
with $\theta = \sup_{x\geq0} \beta x(\log(1+x))^{\beta-1}/(1+x)$.

Moreover, suppose that $\mu$ satisfies the $L_1\Phi$-entropy inequality \eqref{beta LS 2} for some $\beta\in[\frac{1}{2},1]$ with constant $c$, and that $g\geq0$ is a locally Lipschitz function such that
\begin{equation}
\label{g gradient condition}
|\nabla g|^q \leq ag + b
\end{equation}
for some constants $a, b\in(0,\infty)$.  Then $\Theta(s^\beta g^\beta)<\infty$ for sufficiently small $s>0$, and
\begin{equation}
\label{relative Phi-entropy 2}
\mu(fg^\beta) \leq \frac{c}{s^\beta}\mu|\nabla f| + \frac{c}{s^\beta}\Theta(s^\beta g^\beta)\mu(f),
\end{equation}
for all functions $f\geq0$ for which the right hand side is well defined.
\end{lemma}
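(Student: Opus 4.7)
The plan is to prove the abstract duality inequality \eqref{relative Phi-entropy} first, then deduce \eqref{relative Phi-entropy 2} by combining it with the hypotheses of the second part. For the first assertion, I would begin by rescaling $h \mapsto sh$ to reduce to the case $s=1$, so that the target becomes
\[
\mu(fh) \le \mathbf{Ent}_\mu^\Phi(f) + \Theta(h)
\]
for $\mu f = 1$; the general $s$-dependent version follows by substitution. To attack this bound I would combine the classical Gibbs (Donsker--Varadhan) variational principle $\mu(fG) \le \mu(f \log f) + \log \mu e^{G}$ (valid for $\mu f = 1$ and arbitrary $G$) with H\"older's inequality. By H\"older with conjugate exponents $p$ and $q = 1/\beta$ and using $\mu f = 1$, one has $\mu(fh) \le (\mu(fh^q))^{1/q} = (\mu(fh^q))^\beta$. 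Applying Gibbs with $G = h^q$ gives $\mu(fh^q) \le \mu(f\log f) + \log \mu e^{h^q}$, and the subadditivity $(a+b)^\beta \le a^\beta + b^\beta$ (valid for $\beta \in (0,1]$ and $a,b \ge 0$) isolates the key contribution $(\log \mu e^{h^q})^\beta$ appearing in $\Theta(h)$.

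The remaining work for the first part is to dominate the residual $(\mu(f\log f))^\beta$ by $\mathbf{Ent}_\mu^\Phi(f) + (\log 2)^\beta + \theta$. This is where the precise form $\Phi(x) = x(\log(1+x))^\beta$ and the uniform derivative bound $\Phi'(x) \le (\log(1+x))^\beta + \theta$ are essential. Using this estimate, the convexity of $\Phi$, and the identity $\Phi(\mu f) = (\log 2)^\beta$ when $\mu f = 1$, one obtains the required comparison via a pointwise convexity argument or by integration against the tilted measure $f\mu$.

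For the second assertion, I would first observe that $(s^\beta g^\beta)^q = sg$, so $\Theta(s^\beta g^\beta) < \infty$ is equivalent to $\mu e^{sg} < \infty$. Under the gradient hypothesis $|\nabla g|^q \le ag + b$ and the assumed $L_1\Phi$-entropy inequality, this finiteness for sufficiently small $s>0$ is exactly the content of Corollary \ref{exponential bounds}. To derive \eqref{relative Phi-entropy 2}, I would apply the already-proved \eqref{relative Phi-entropy} (with $h = g^\beta$ and $s$ replaced by $s^\beta$) to the normalized density $f/\mu f$, invoke the $L_1\Phi$-entropy inequality to bound $\mathbf{Ent}_\mu^\Phi(f/\mu f) \le c\mu|\nabla f|/\mu f$, and finally multiply through by $\mu f$ and absorb multiplicative constants.

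The main obstacle is the sharpness of the first step. A direct Young--Fenchel inequality $\mu(fh) \le \mathbf{Ent}_\mu^\Phi(f) + \Phi(\mu f) + \mu \Phi^{*}(h)$ would produce a term of order $\mu e^{h^q}$ instead of the desired $(\log \mu e^{h^q})^\beta$, which is far too crude. The Gibbs + H\"older + subadditivity route is designed precisely to save a logarithm through Jensen's inequality; the delicate part is then matching the residual $(\mu(f\log f))^\beta$ with $\mathbf{Ent}_\mu^\Phi(f)$ up to the additive constant $\theta$, which relies on the specific subextensive growth of $\Phi$ at infinity captured by the bound on $\Phi'$.
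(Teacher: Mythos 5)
Your second part (finiteness of $\Theta(s^\beta g^\beta)$ via Corollary \ref{exponential bounds}, then \eqref{relative Phi-entropy} applied to $f/\mu f$ combined with the $L_1\Phi$-entropy inequality to get \eqref{relative Phi-entropy 2}) is exactly the paper's argument. The first part, however, has a genuine gap. Your chain is $\mu(fh)\le(\mu(fh^q))^\beta\le(\mu(f\log f))^\beta+(\log\mu e^{h^q})^\beta$, after which you need
\[
\bigl(\mu(f\log f)\bigr)^\beta \;\le\; \mathbf{Ent}_\mu^\Phi(f)+(\log 2)^\beta+\theta \;=\;\mu\bigl(f(\log(1+f))^\beta\bigr)+\theta .
\]
This inequality is false for $\beta<1$. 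By Jensen applied to the probability measure $f\,d\mu$ and the concave map $x\mapsto x^\beta$, the comparison between $\bigl(\mu(f\log(1+f))\bigr)^\beta$ and $\mu\bigl(f(\log(1+f))^\beta\bigr)$ runs in the opposite direction, and the gap is unbounded: take $f=(1-\varepsilon)+M\Ind_A$ with $\mu(A)=\varepsilon/M$, so that $\mu f=1$, $\mu(f\log f)\sim\varepsilon\log M$ and hence the left side is $\sim\varepsilon^\beta(\log M)^\beta$, while $\mu\bigl(f(\log(1+f))^\beta\bigr)\sim\varepsilon(\log M)^\beta+O(1)$; since $(\varepsilon^\beta-\varepsilon)(\log M)^\beta\to\infty$ as $M\to\infty$ for fixed $\varepsilon\in(0,1)$, no additive constant $\theta$ can close the gap. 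The defect is structural: pulling the $1/q$-power outside the integral by H\"older forces you to bound a power of an integral by an integral of a power, which is exactly what Jensen forbids.

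The paper avoids this by keeping the exponent $\beta$ inside the integral throughout. It writes $fh=s^{-1}f\bigl(\log e^{s^qh^q}\bigr)^\beta$ pointwise, splits $\log e^{s^qh^q}\le\log\bigl(1+e^{s^qh^q}/\mu e^{s^qh^q}\bigr)+\log\mu e^{s^qh^q}$ on the set where $e^{s^qh^q}\ge\mu e^{s^qh^q}$ (using subadditivity of $x\mapsto x^\beta$), and then applies the pointwise generalised relative entropy inequality $xF(y)\le xF(x)+\theta y$ with $F(x)=(\log(1+x))^\beta$ (the same device as in the proof of Lemma \ref{monotonicity}, here with $x=f/\mu f$ and $y=e^{s^qh^q}/\mu e^{s^qh^q}$). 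Integrating yields $\mu\bigl(f(\log(1+f))^\beta\bigr)+\theta=\mathbf{Ent}_\mu^\Phi(f)+(\log 2)^\beta+\theta$ directly, with no detour through $\mu(f\log f)$. If you want to retain a variational flavour, replace the classical Donsker--Varadhan duality by this $\Phi$-adapted pointwise version from \cite{F-R-Z}.
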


\begin{proof}  \label{prf_lem2.5}
We remark first that for  functions $f,h\geq 0$, $\mu f = 1$ , with $s\in(0,\infty)$ and 
$\beta\equiv \frac1q\in(0,1)$, we have
\begin{align*}
\mu(fh) = s^{-1}\mu \Big(f\left(\log e^{s^qh^q}\right)\Big)^\beta & \leq
s^{-1}\mu \left[f\left(\log\left( 1+\frac{e^{s^qh^q}}{\mu e^{s^qh^q}}\right)\right)^\beta \chi(e^{s^qh^q}\geq \mu e^{s^qh^q} )\right]
\\ 
&+ s^{-1}\left(\log \mu e^{s^qh^q}\right)^\beta  \mu (f).
\end{align*}
By the generalised relative entropy inequality of \cite{F-R-Z}, we have
\begin{align*}
\mu \left[f\left(\log\left( 1+\frac{e^{s^qh^q}}{\mu e^{s^qh^q}}\right)\right)^\beta \right]
&\leq  \mu f\left(\log\left( 1+\frac{f}{\mu f}\right)\right)^\beta +\theta \mu f \\
&\leq \mathbf{Ent}_\mu^\Phi (f) + (\theta+(\log 2)^\beta) \mu f,
\end{align*}
since $\mu f=1$.  We therefore get the following bound
\begin{equation}\label{eq2.5.1}
\mu(fh)    \leq
s^{-1}\mathbf{Ent}_\mu^\Phi (f)  
 + s^{-1}\left(\theta+(\log 2)^\beta + \left(\log \mu e^{s^qh^q}\right)^\beta \right).
\end{equation}
This ends the proof of the first part of the lemma. \\
Replacing $h$ by $g^\beta\equiv g^{\frac1q}$  and $s$ by $s^\beta$ in \eqref{eq2.5.1}, we see that the second part is a consequence of Corollary \ref{exponential bounds}. 
\end{proof}

\begin{theorem}\label{thm2.6}
\label{converse}
Let $d\mu = \frac{e^{-U}}{Z}d\lambda$ be a probability measure on $\R^N$, with $U$ a locally Lipschitz function bounded from below. Suppose $\mu$ satisfies the $L_1\Phi$-entropy inequality \eqref{beta LS 2} for some $\beta \in[\frac{1}{2},1]$. \\ 
Suppose also that
\begin{equation}
\label{U gradient condition}
|\nabla U| \leq a|U|^\beta + b
\end{equation}
for some constants $a, b\in(0, \infty)$.
Then there exist constants $A,B\in[0, \infty)$ such that
\begin{equation}
\label{converse U bound}
\mu\left(|f|\left(|U|^\beta + |\nabla U|\right)\right) \leq A\mu|\nabla f| + B\mu|f|,
\end{equation}
for all $f$ for which the right-hand side is well defined.
\end{theorem}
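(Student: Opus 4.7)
The plan is to apply Lemma~\ref{relative Phi-entropy lem} with the choice $g := U$ and then absorb $|\nabla U|$ using the hypothesis \eqref{U gradient condition}. Since $U$ is bounded below, shifting $U$ by a constant alters $|U|^\beta$ only up to an additive constant (using $\beta \le 1$ and subadditivity of $x\mapsto x^\beta$) while leaving $\mu$ and $|\nabla U|$ unchanged; the hypothesis \eqref{U gradient condition} survives such a shift with adjusted constants, so we may assume $U\ge 0$. Splitting $f=f_+-f_-$ reduces the claim to $f\ge 0$.

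The first step is to verify that $g:=U$ satisfies the gradient hypothesis \eqref{g gradient condition}, namely $|\nabla g|^q \le a' g + b'$ with $q=1/\beta\in[1,2]$. Raising \eqref{U gradient condition} to the power $q$ and applying $(x+y)^q\le 2^{q-1}(x^q+y^q)$ yields
\[
|\nabla U|^q \le 2^{q-1}\bigl(a^q U^{\beta q} + b^q\bigr) = a' U + b',
\]
because $\beta q=1$. By Corollary~\ref{exponential bounds} this implies $\mu(e^{tU})<\infty$ for all sufficiently small $t>0$; and since $(s^\beta U^\beta)^q=sU$, the quantity
\[
\Theta(s^\beta U^\beta) = \theta+(\log 2)^\beta+\bigl(\log \mu e^{sU}\bigr)^\beta
\]
is finite for all sufficiently small $s>0$.

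Fixing such an $s$, the second part of Lemma~\ref{relative Phi-entropy lem} then gives
\[
\mu(f U^\beta) \le \frac{c}{s^\beta}\,\mu|\nabla f| + \frac{c\,\Theta(s^\beta U^\beta)}{s^\beta}\,\mu(f).
\]
The pointwise estimate $|\nabla U|\le aU^\beta+b$ supplied by the hypothesis then yields
\[
\mu\bigl(f(U^\beta + |\nabla U|)\bigr) \le (1+a)\,\mu(f U^\beta) + b\,\mu(f),
\]
and combining the two displays gives \eqref{converse U bound} with $A=(1+a)c/s^\beta$ and $B=(1+a)c\,\Theta(s^\beta U^\beta)/s^\beta+b$.

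The one genuinely non-trivial ingredient is the exponential integrability $\mu(e^{tU})<\infty$, which is supplied by Corollary~\ref{exponential bounds}. Its applicability depends crucially on the matching of exponents $\beta q=1$, without which the hypothesis \eqref{U gradient condition} would not convert to the form $|\nabla U|^q\le a'U+b'$ that the corollary requires. Once the exponential integrability is in hand, the rest of the argument is routine book-keeping, and the reduction to $U\ge 0$ and $f\ge 0$ is harmless for the same reason.
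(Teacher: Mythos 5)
Your proposal is correct and follows essentially the same route as the paper's own proof: reduce to $U\ge 0$ and $f\ge 0$, observe that raising \eqref{U gradient condition} to the power $q=1/\beta$ gives $|\nabla U|^q\le \tilde a U+\tilde b$, apply the second part of Lemma \ref{relative Phi-entropy lem} with $g=U$, and absorb $|\nabla U|$ via the pointwise hypothesis. Your write-up is slightly more explicit about the constants and the harmlessness of shifting $U$, but the argument is the same.
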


\begin{proof}
Let $f\geq0$.  We may also suppose that $U\geq0$ (otherwise we can shift it by a constant).  Note that from \eqref{U gradient condition}, it follows that  
\[
|\nabla U|^q \leq \tilde{a}U + \tilde{b}
\]
with $q=\frac{1}{\beta}$.  Hence we may apply Lemma \ref{relative Phi-entropy lem}, to see that
\[
\mu(fU^\beta) \leq \frac{c}{s^\beta}\mu|\nabla f| + \frac{c}{s^\beta}\Theta(s^\beta U^\beta)\mu(f)
\]
with $\Theta(s^\beta U^\beta)<\infty$ for sufficiently small $s$.
\end{proof}


The following Theorem summarises the results of the paper so far.

\begin{theorem}\label{thm3.4}\label{summary}
Let $\mu$ be a non-atomic  probability measure on $(\R^N, d)$,   \(\vert \nabla f \vert\) be given by \eqref{metricgrad} and  $q\geq1$.  Then the following statements are equivalent
\begin{itemize}
\item[{\rm (i)}]
\[\mathbf{Ent}_\mu^\Phi(|f|) \leq c\mu|\nabla f|,
\]
where $\Phi(x) = x\left(\log(1+x)\right)^\frac{1}{q}$, for some constant $c\in(0,\infty)$  and all locally Lipschitz \(f\);
\\

\item[{\rm (ii)}]
\[
\mu \left(   f   \left( \log_+ \frac{ f  }{\mu  f  } \right)^{1/q} \right) \le 
K\, \mu \vert \nabla f \vert+K'\mu f,
\]
for some $K>0$  and
\[\mu \vert f-\mu f\vert \leq c_0\, \mu \vert \nabla f \vert
\]
with some $c_0\in(0,\infty)$ and all locally Lipschitz \(f\geq 0\);
\\

\item[{\rm (iii)}]
\[
\mathcal{U}_q(t) \le \tilde{c} \mu^+(A),
\]
for some \(\tilde{c}>0\) and all Borel sets \(A\) of measure \(t=\mu(A) \).
\\

\end{itemize}
Moreover, for $q\in(1,2]$ statements {\rm (i) - (iii)} imply
\begin{itemize}
\item[{\rm (iv)}]
\[
\mu\left(|f|^q\log\frac{|f|^q}{\mu|f|^q}\right) \leq C'\mu|\nabla f|^q \eqno{(LS_q)}
\]
for some $C'\in(0,\infty)$ and all locally Lipschitz functions $f$
\end{itemize}
and
\begin{itemize}
\item[{\rm (v)}]
\[ \mathcal{U}_2(\mu f) \leq \mu \sqrt{\mathcal{U}_2^2(f)+ C''|\nabla f|^2}
 \eqno{(IFI_2)}
\]
for some $C''\in(0,\infty)$ and all locally Lipschitz functions $0\leq f\leq 1$.
\end{itemize}

Finally, suppose that the probability measure $\mu$ is given by $\mu(dx) = \frac{e^{-U}}{Z}d\lambda$ for some locally Lipschitz function $U$ on $\R^N$ which is bounded from below.  Suppose that the measure $d\lambda$ satisfies the classical Sobolev inequality \eqref{class sob} together with the Poincar\'e inequality in balls \eqref{Poincare in ball}, and that $\forall L\geq0$ there exists $r=r(L)$ such that $\{U\leq L\}\subset B(r)$.  In this situation the following $U$-bound

\begin{equation}
\label{U bound again}
\mu\left(|f|\left(|U|^\beta + |\nabla U|\right)\right) \leq A\mu|\nabla f| + B\mu|f|
\end{equation}
for constants $A, B\in[0, \infty), \beta\in(0,1]$, implies that statements {\rm (i)-(iii)} hold with $q = \frac{1}{\beta}$.  If in addition we have that \eqref{U gradient condition} holds i.e.  there exist constants $a,b$ such that
\[
|\nabla U| \leq aU^\beta + b
\]
then \eqref{U bound again} is actually equivalent to the statements {\rm(i) - (iii)}.
\end{theorem}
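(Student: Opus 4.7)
My plan is to close the cycle $(\mathrm{i})\Rightarrow (\mathrm{iii})\Rightarrow (\mathrm{ii})\Rightarrow (\mathrm{i})$ using results of the previous sections, to deduce $(\mathrm{iv})$ and $(\mathrm{v})$ one-way from $(\mathrm{i})$, and finally to treat the $U$-bound equivalence separately. For the cycle: $(\mathrm{i})\Rightarrow (\mathrm{iii})$ is exactly Theorem~\ref{isoq}. For $(\mathrm{iii})\Rightarrow (\mathrm{ii})$, the first inequality in $(\mathrm{ii})$ is Proposition~\ref{tightledouxprop}, which combines the coarea inequality \eqref{coarea} with the isoperimetric bound \eqref{isoqineq} and the equivalence \eqref{qequivalence}; the Cheeger part of $(\mathrm{ii})$ is extracted as in Corollary~\ref{phi-ent to cheeger}, since \eqref{qequivalence} gives $\U_q(t)\geq c\min(t,1-t)$, so $(\mathrm{iii})$ implies a set-level Cheeger bound, equivalent in $L^1$ form by the standard coarea argument of \cite{bobkovhoudreams}. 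For $(\mathrm{ii})\Rightarrow (\mathrm{i})$, I replay the proof of Theorem~\ref{tighten}: the first inequality in $(\mathrm{ii})$ combined with Lemma~\ref{monotonicity} yields a defective $L_1\Phi$-entropy bound, and the Rothaus-type lemma from \cite{L-Z}, applied to $\bigl(|f+t|^{1/2}-\mu|f+t|^{1/2}\bigr)^2$, converts the residual $\mu|f|$-term into $\mu|f-\mu f|$, which is absorbed by the Cheeger inequality in $(\mathrm{ii})$.

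For the one-way consequences, $(\mathrm{i})\Rightarrow (\mathrm{iv})$ on $q\in(1,2]$ is Theorem~\ref{Phi-entropy to LSq}. For $(\mathrm{v})$, at $q=2$ the function $\U_2$ is the Gaussian isoperimetric profile, and the transportation argument of Bobkov and of Barthe--Maurey (recalled in the remark after Proposition~\ref{tightledouxprop}) gives $(\mathrm{iii})\Leftrightarrow IFI_2$. For $q\in(1,2)$, \eqref{qequivalence} together with symmetry yields $\U_q(t)\geq c_q\,\U_2(t)$ for all $t\in[0,1]$, so $(\mathrm{iii})$ at index $q$ still implies the Gaussian set-level isoperimetric bound, whence $IFI_2$ follows by the same transportation argument.

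For the $U$-bound equivalence at the end of the theorem: under the stated hypotheses on $d\lambda$ and $U$, the forward direction (the $U$-bound \eqref{U bound again} implies $(\mathrm{i})$--$(\mathrm{iii})$) is exactly Theorem~\ref{main}, which proceeds via Theorem~\ref{cheeger from U-bound} to produce the Cheeger inequality and then Theorem~\ref{tighten} to produce $(\mathrm{i})$. The converse direction, under the additional growth condition \eqref{U gradient condition}, is Theorem~\ref{converse}: apply Lemma~\ref{relative Phi-entropy lem} with $h=U^\beta$, where \eqref{U gradient condition} and Corollary~\ref{exponential bounds} supply the exponential integrability needed to guarantee $\Theta(s^\beta U^\beta)<\infty$ for sufficiently small $s>0$. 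The main technical subtlety I foresee is the Rothaus tightening inside $(\mathrm{ii})\Rightarrow (\mathrm{i})$, together with the appeal to the external Bobkov / Barthe--Maurey transportation argument at $q=2$; once these two inputs are in place, the remainder consists of organising and quoting results already established above.
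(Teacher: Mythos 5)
Your proposal is correct and follows essentially the same route as the paper: the cycle is closed via the same three implications ((i)$\Rightarrow$(iii) by Theorem \ref{isoq}, (iii)$\Rightarrow$(ii) by Proposition \ref{tightledouxprop} together with Corollary \ref{phi-ent to cheeger}, and (ii)$\Rightarrow$(i) by the Rothaus tightening of Theorem \ref{tighten}), with (iv) from Theorem \ref{Phi-entropy to LSq}, (v) from the comparison $\mathcal{U}_2 \leq \bar C\, \mathcal{U}_q$ followed by the Barthe--Maurey transportation argument, and the $U$-bound equivalence from Theorems \ref{main} and \ref{converse}. The only point worth noting is that the paper explicitly restricts the deduction of (v) to the Euclidean (or finite-dimensional, $l_p$-equivalent) setting when invoking \cite{barthemaurey}, a caveat you should retain.
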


\begin{proof}
(ii)$ \Rightarrow $ (i) was shown in Section \ref{L_1 entropy inequalities from U-bounds}.  (i) $\Rightarrow$ (iii) is proved in Theorem \ref{isoq}. Finally, Proposition \ref{tightledouxprop} together with Corollary \ref{phi-ent to cheeger} show that (iii) $\Rightarrow$ (ii).  The rest of the Theorem, except (v), is a restatement of the results of Section \ref{L_1 entropy inequalities from U-bounds} and the current one. \\
To see (v) we notice that using (\ref{qequivalence}) for small $t>0$ (as well as small $1-t>0$) we have
\[\mathcal{U}_2(t) \leq \bar C_0\mathcal{U}_q(t)
\]
with some $\bar C_0\in(0,\infty)$, and thus there is a constant $\bar C\in(0,\infty)$ such that for all $t\in(0,1)$
\[\mathcal{U}_2(t) \leq \bar C \mathcal{U}_q(t)
\]
Hence, by (iii), we have the following isoperimetric relation
\[\mathcal{U}_2(t)\leq \tilde C \mu^+(A)\]
for any set $A$ with $\mu(A)=t$. Thus, if we are working with Euclidean distance (or we are in finite dimensions when distances given by $l_p$ norms are equivalent), by arguments of \cite{barthemaurey} the $IFI_2$ is true.
\end{proof}

\begin{rem}
We remark that generally perturbation of  $IFI_2$  is a difficult matter if the unbounded log of the density is involved. Our route via $U$-bounds allows us to achieve that very effectively.

Secondly, as conjectured in \cite{B-Z} for $q\in(1,2]$ it would be natural to expect the following
functional isoperimetric inequality with optimal isoperimetric function
\[\mathcal{U}_q(\mu f) \leq \mu \sqrt[q]{\mathcal{U}_q^q(f)+ C_q|\nabla f|_q^q}
 \eqno{(IFI_q)}
\]
with some $C_q\in(0,\infty)$ for all differentiable functions $0\leq f\leq 1$.
One of the motivations for such a relation is that (as shown in \cite{B-Z}) it implies $LS_q$.
Using  $IFI_2$ and the relation of $l_q$ norms, in finite dimension one can see that
\[ 
\mathcal{U}_2(\mu f) \leq \mu \sqrt[q]{\mathcal{U}_2^q(f)+ C_2'|\nabla f|_q^q}.
\]
In the right-hand side, using the asymptotic relation between isoperimetric functions,
one could also replace $\mathcal{U}_2$ with $\mathcal{U}_q$. The question remains if adjusting
the left-hand side in a similar way would still preserve the inequality in the desired sharp form. 
\end{rem}


\section{Application of results}
\label{Sec4.Application of results}

In order to see where these results can be applied, suppose we are still working in the general situation described at the start of this paper, and define a probability measure
 \begin{equation}
 \label{measure}
 d\mu_p := \frac{e^{-\alpha d^p}}{Z}d\lambda
 \end{equation}
on $\R^N$, with $\alpha>0$, $p\in(1, \infty)$ and normalisation constant $Z$.  Recall that here $d:\R^N\times \R^N \to [0, \infty)$ is a metric on $\R^N$.  We have the following result which can be found in \cite{H-Z}.
\begin{proposition}\label{pro4.1}
\label{U bound prop}
Let $\mu_p$ be given by \eqref{measure}.  Suppose that we have
\begin{itemize}
\item[{\rm (i)}] $\frac{1}{\sigma}\leq |\nabla d| \leq1$ almost everywhere for some $\sigma\in[1, \infty)$;
\item[{\rm (ii)}] $\Delta d \leq K + \alpha p\varepsilon d^{p-1}$ on $\{x:d(x)\geq1\}$, for some $K\in[0, \infty), \varepsilon\in[0, \frac{1}{\sigma^2})$.
\end{itemize}
Then there exist constants $A,B\in[0,\infty)$ such that
\[
\mu_p\left(|f|d^{p-1}\right) \leq A\mu_p|\nabla f| + B\mu_p|f|.
\]
\end{proposition}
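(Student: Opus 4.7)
The strategy is classical integration by parts against the exponential weight, exploiting the fact that each $X_i$ is divergence-free with respect to $\lambda$. First, observe that
\[
X_i\!\left(e^{-\alpha d^p}\right) = -\alpha p\, d^{p-1} (X_i d)\, e^{-\alpha d^p},
\]
so that
\[
\sum_{i=1}^m X_i\!\left((X_i d)\, e^{-\alpha d^p}\right) = (\Delta d)\, e^{-\alpha d^p} - \alpha p\, d^{p-1} |\nabla d|^2\, e^{-\alpha d^p}.
\]
Multiplying by $|f|$, integrating over $\R^N$, and using that $\int |f|\, X_i(\phi)\, d\lambda = -\int (X_i |f|)\, \phi\, d\lambda$ (valid by the divergence-free assumption), one obtains the key identity
\[
\alpha p \int |f|\, d^{p-1} |\nabla d|^2\, e^{-\alpha d^p}\, d\lambda \;\leq\; \int |f|\, \Delta d\, e^{-\alpha d^p}\, d\lambda \,+\, \int |\nabla f|\, |\nabla d|\, e^{-\alpha d^p}\, d\lambda.
\]

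Next I would apply the pointwise hypotheses (i) on $|\nabla d|$: bound $|\nabla d|^2 \geq 1/\sigma^2$ on the left, and $|\nabla d| \leq 1$ on the $|\nabla f|$ term on the right. To treat the $\Delta d$ contribution, split the domain into $\{d \geq 1\}$ and $\{d < 1\}$. On the former, assumption (ii) gives $\Delta d \leq K + \alpha p \varepsilon d^{p-1}$, so that this part of the integral contributes at most
\[
K \int_{\{d \geq 1\}} |f|\, e^{-\alpha d^p}\, d\lambda \,+\, \alpha p \varepsilon \int_{\{d \geq 1\}} |f|\, d^{p-1}\, e^{-\alpha d^p}\, d\lambda.
\]
Since $\varepsilon < 1/\sigma^2$, the second term can be absorbed into the left-hand side. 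On $\{d<1\}$ the factor $d^{p-1}$ is bounded and $\{d<1\}$ is a bounded (in fact compact, modulo local regularity of $d$) set on which $\Delta d$ can be controlled locally, so this region contributes a term of the form $B'\mu_p|f|$.

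Rearranging and dividing through by the normalising constant $Z$ then yields
\[
\mu_p\!\left(|f|\, d^{p-1}\right) \leq A\, \mu_p |\nabla f| + B\, \mu_p |f|
\]
with constants depending only on $p, \alpha, \sigma, K, \varepsilon$. The main obstacle I anticipate is the low regularity of $d$: the distance function is typically only Lipschitz, so $\Delta d$ must be interpreted in a distributional (or upper-Laplacian) sense, and the integration by parts needs to be justified, for instance via a regularisation of $d$ combined with the global upper bound in (ii) and a careful handling of the small-$d$ region, where $\Delta d$ may even have singular parts (as for the Carnot--Carathéodory distance near the origin).
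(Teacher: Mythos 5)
The paper does not actually prove this proposition --- it quotes it from \cite{H-Z} --- so the relevant comparison is with the argument there, and your integration by parts against the weight $e^{-\alpha d^p}$ is indeed that argument. Your key identity is correct, as is the absorption of the $\alpha p\varepsilon d^{p-1}$ term into the left-hand side using $\varepsilon<\frac{1}{\sigma^2}$ and $|\nabla d|^2\geq\frac{1}{\sigma^2}$.

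There is, however, one genuine gap: your treatment of $\{d<1\}$. Hypothesis (ii) gives no control whatsoever on $\Delta d$ there, and for the distances this proposition is designed for the Laplacian of $d$ is genuinely singular near the origin (already in the Euclidean model $\Delta d=(N-1)/|x|$, and for the Carnot--Carath\'eodory distance on an H-type group the singular set is worse). So the assertion that this region ``contributes a term of the form $B'\mu_p|f|$'' because ``$\Delta d$ can be controlled locally'' is not justified, and the bound $\int_{\{d<1\}}|f|\,\Delta d\,e^{-\alpha d^p}d\lambda\leq B'\int|f|\,e^{-\alpha d^p}d\lambda$ actually fails for $f$ concentrated near the singularity. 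The standard repair (and the one used in \cite{H-Z}) is not to integrate by parts against $|f|$ but against $|f|\chi(d)$, where $\chi$ is Lipschitz with $\chi=0$ on $\{d\leq1\}$ and $\chi=1$ on $\{d\geq2\}$. Then the $\Delta d$ term is supported in $\{d\geq1\}$ where (ii) applies, the extra term coming from differentiating the cutoff is bounded by $\|\chi'\|_\infty|f|\,|\nabla d|\leq \|\chi'\|_\infty|f|$, the left-hand side controls $\int|f|\chi(d)\,d^{p-1}e^{-\alpha d^p}d\lambda$, and the omitted region is handled trivially via $d^{p-1}\leq 2^{p-1}$ on $\{d<2\}$, giving $\mu_p(|f|d^{p-1}\Ind_{\{d<2\}})\leq 2^{p-1}\mu_p|f|$. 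This device also disposes of most of the regularity worry you raise at the end: the integration by parts now takes place away from the singular set of $d$, and what remains is the standard point that $d$ is Lipschitz with $\Delta d$ interpreted distributionally (or via a.e.\ second differentiability), which is exactly the technical justification supplied in \cite{H-Z}.
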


This proposition gives conditions under which the bound \eqref{U bound again} in Theorem \ref{summary} holds for a particular choice of $U$ and $\beta$.  Indeed, we thus have the following corollary:

\begin{corollary}\label{cor4.2}
\label{conditions}
Let  $\mu_p$ be given by \eqref{measure}.  Suppose that conditions {\rm (i)} and {\rm (ii)} of Proposition \ref{U bound prop} are satisfied.  Suppose also that the measure $d\lambda$ satisfies the classical Sobolev inequality \eqref{class sob} together with the Poincar\'e inequality in balls \eqref{Poincare in ball}. Then inequalities {\rm (i)-(iii)} of Theorem \ref{summary} are satisfied, with $q$ such that $\frac{1}{p} + \frac{1}{q}=1 $.\\
Moreover, if $p\geq 2$  {\rm (iv)-(v)} are also true. \end{corollary}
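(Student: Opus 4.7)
The plan is to invoke Theorem \ref{summary} with the choice $U = \alpha d^p$ and $\beta = (p-1)/p$, and verify each hypothesis using Proposition \ref{U bound prop} together with the standing assumptions. The key bookkeeping observation, from which everything else falls out, is that $\beta = 1/q$ with $1/p+1/q=1$ precisely when $p\beta = p-1$, so both the term $|U|^{\beta}$ and the term $|\nabla U|$ in the $U$-bound of Theorem \ref{summary} will scale like $d^{p-1}$ and can therefore be controlled simultaneously by Proposition \ref{U bound prop}.

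First I would make the pointwise computation: since $U = \alpha d^p \geq 0$, we have $|U|^{\beta} = \alpha^{\beta} d^{p\beta} = \alpha^{\beta} d^{p-1}$, while $|\nabla U| = \alpha p\, d^{p-1}|\nabla d| \leq \alpha p\, d^{p-1}$ by hypothesis (i) of Proposition \ref{U bound prop}. Setting $C_{\alpha} := \alpha^{\beta} + \alpha p$, we obtain the pointwise bound $|U|^{\beta} + |\nabla U| \leq C_{\alpha} d^{p-1}$. Applying Proposition \ref{U bound prop} then gives
\[
\mu_p\!\left(|f|\left(|U|^{\beta} + |\nabla U|\right)\right) \leq C_{\alpha}\mu_p(|f| d^{p-1}) \leq A'\mu_p|\nabla f| + B'\mu_p|f|,
\]
which is exactly the $U$-bound \eqref{U bound again} required by Theorem \ref{summary}.

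Next I would verify the remaining hypotheses of Theorem \ref{summary}. The classical Sobolev inequality \eqref{class sob} for $d\lambda$ and the Poincar\'e inequality in balls \eqref{Poincare in ball} are assumed directly. The level-set compactness condition is trivial: for every $L\geq 0$, $\{U \leq L\} = \{\alpha d^p \leq L\} = B((L/\alpha)^{1/p})$, so one may take $r(L) = (L/\alpha)^{1/p}$. All hypotheses are therefore in place, and Theorem \ref{summary} delivers statements (i)-(iii) with $q = 1/\beta = p/(p-1)$.

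Finally, for $p\geq 2$ we have $q = p/(p-1) \in (1,2]$, so the additional conclusions (iv) and (v) of Theorem \ref{summary} apply as well, giving the $(LS_q)$ inequality and, via the $q=2$ endpoint together with the argument of \cite{barthemaurey} already invoked in the proof of Theorem \ref{summary}, the $(IFI_2)$ inequality. There is no real obstacle here: the entire argument is the alignment of exponents $p\beta = p-1$, after which Proposition \ref{U bound prop} and Theorem \ref{summary} compose cleanly. One might also remark in passing that the gradient condition \eqref{U gradient condition} is automatic in this setting ($|\nabla U| \leq \alpha p\, \alpha^{-\beta} U^{\beta}$), so the $U$-bound obtained is in fact equivalent to (i)-(iii) rather than merely a sufficient condition.
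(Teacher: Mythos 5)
Your proposal is correct and follows essentially the same route as the paper: the pointwise bound $|U|^{\beta}+|\nabla U|\leq(\alpha^{\beta}+\alpha p)\,d^{p-1}$ for $U=\alpha d^{p}$ and $\beta=1/q$, followed by Proposition \ref{U bound prop} to get the $U$-bound \eqref{U bound again} and then Theorem \ref{summary}. Your explicit verification of the level-set condition $\{U\leq L\}\subset B((L/\alpha)^{1/p})$ and the closing remark that \eqref{U gradient condition} holds automatically (giving equivalence rather than mere implication) are correct additions that the paper leaves implicit.
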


\begin{proof}
For $U = \alpha d^p$ and $\beta=\frac{1}{q}$ we have
\begin{align*}
\mu_p\left(|f|\left(U^\beta + |\nabla U|\right)\right)  &\leq \mu_p\left(|f|\left(\alpha^\beta d^{\beta p} + \alpha p d^{p-1}\right)\right) \\
& \leq (\alpha^\beta + \alpha p)\mu_p(|f|d^{p-1}).
\end{align*}
Therefore by Proposition \ref{U bound prop}, we have
\[
\mu\left(|f|\left(U^\beta + |\nabla U|\right)\right) \leq \tilde A\mu|\nabla f| +\tilde B \mu|f|
\]
where $\tilde A =  (\alpha^\beta + \alpha p)A$ and $\tilde B =  (\alpha^\beta + \alpha p)B$.  Thus we can apply Theorem \ref{summary}.
\end{proof}

We can perturb the measure in this result and all the inequalities will hold for the perturbed measure, as follows.
\begin{corollary}\label{cor4.3}
Let $d{\hat \mu}  = e^{-W-V}/{\hat Z} d\mu_p$ be the probability measure described in Corollary \ref{perturbation2} with unbounded locally Lipschitz $W$ and bounded measurable $V$.
Then 
${\hat \mu}$ enjoys all properties as $\mu_p$ in Corollary \ref{cor4.2}.
\end{corollary}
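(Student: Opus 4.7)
The plan is to recognise that $\hat\mu$ falls directly under Corollary \ref{perturbation2}, use it to deduce the $L_1\Phi$-entropy inequality, and then read off every other assertion from Theorem \ref{summary}.

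First I would verify that $\mu_p$ satisfies all the hypotheses of Corollary \ref{perturbation2}. Take $U = \alpha d^p$ and $\beta = 1/q = (p-1)/p$, so that both $|U|^\beta$ and $|\nabla U|$ are dominated by constant multiples of $d^{p-1}$; hence the $U$-bound \eqref{U-bound} is an immediate consequence of Proposition \ref{U bound prop}, exactly as already observed in the proof of Corollary \ref{conditions}. The classical Sobolev inequality \eqref{class sob} for $d\lambda$ is a standing assumption. For the Cheeger type inequality \eqref{cheeger} for $\mu_p$ I would invoke Theorem \ref{cheeger from U-bound}: its hypothesis (a) holds because $|U|^\beta$ is a monotone function of $d$, so $\{|U|^\beta \leq L\}$ is the closed metric ball of radius $(L/\alpha^\beta)^{1/(p-1)}$, while (b) is the Poincar\'e inequality in balls that we have also assumed. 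The conditions on the locally Lipschitz $W$ and the bounded measurable $V$ are inherited verbatim from the statement of Corollary \ref{perturbation2}.

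With these ingredients in place, Corollary \ref{perturbation2} produces
\[
\mathbf{Ent}_{\hat\mu}^\Phi(|f|) \leq \hat c\, \hat\mu |\nabla f|,
\]
which is statement (i) of Theorem \ref{summary} for $\hat\mu$. Since $\hat\mu$ is absolutely continuous with respect to Lebesgue measure it is non-atomic, so the equivalences in Theorem \ref{summary} apply directly. Statements (ii) and (iii) of that theorem then follow, yielding the tight Ledoux-type bound together with Cheeger's inequality, and the isoperimetric estimate $\mathcal{U}_q(t) \leq \tilde c\, \hat\mu^+(A)$ for all Borel sets $A$ with $\hat\mu(A)=t$. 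When $p \geq 2$ we have $\beta \in [1/2, 1)$ and $q \in (1,2]$, so the implications (i) $\Rightarrow$ (iv) and (i) $\Rightarrow$ (v) of Theorem \ref{summary} deliver $LS_q$ and $IFI_2$ for $\hat\mu$ as well.

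I do not anticipate a genuine obstacle: every step is either a direct invocation of the perturbation corollary or of the summary theorem. The only point that deserves a moment of care is that $U = \alpha d^p$ being monotone in $d$ makes the sublevel sets of $|U|^\beta$ into genuine metric balls, which is precisely what allows Theorem \ref{cheeger from U-bound} (and hence the application of Corollary \ref{perturbation2}) to go through without further work.
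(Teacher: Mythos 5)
Your proposal is correct and is essentially the argument the paper intends (the corollary is stated without an explicit proof): one checks the hypotheses of Corollary \ref{perturbation2} for $\mu_p$ exactly as in the proof of Corollary \ref{cor4.2} (the $U$-bound from Proposition \ref{U bound prop}, Cheeger via Theorem \ref{cheeger from U-bound} since the sublevel sets of $|U|^{\beta}=\alpha^{\beta}d^{p-1}$ are metric balls), obtains the $L_1\Phi$-entropy inequality for $\hat\mu$, and then reads off statements (ii)--(iii), and (iv)--(v) when $p\geq 2$, from the equivalences and implications of Theorem \ref{summary}. No gaps.
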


\begin{rem}\label{rem4.4}
The conditions of Corollary \ref{conditions} are easily seen to be satisfied in the Euclidean case, when we are dealing with the standard gradient and Laplacian in $\R^N$, and $d(x) = |x|$.  In this situation, with $p=2$, the inequalities we prove are already known (see \cite{Led1}), though the proof we give here is new.
\end{rem}

The value of our results is that they can be used in more general situations than the Euclidean one.  In particular it can be applied in the following setting.

\begin{Example}\label{Ex4.5}
[$H$-type groups]
Let $\mathfrak{g}$ be a (finite-dimensional real) Lie algebra and let $\mathfrak{z}$ denote its centre (i.e. $[\mathfrak{g}, \mathfrak{z}]=0$). We say that $\mathfrak{g}$ is of {\rm H-type} if it admits a vector space decomposition
\[
\mathfrak{g} = \mathfrak{v}\oplus\mathfrak{z}
\]
where $\left[\mathfrak{v}, \mathfrak{v}\right]\subseteq \mathfrak{z}$, such that there exists an inner product $\langle \cdot, \cdot\rangle$ on $\mathfrak{g}$ such that $\mathfrak{z}$ is an orthogonal complement to $\mathfrak{v}$, and the map $J_Z: \mathfrak{v}\mapsto\mathfrak{v}$ given by
\[
\langle J_ZX, Y\rangle = \langle [X, Y], Z\rangle
\]
for $X, Y\in \mathfrak{v}$ and $Z\in\mathfrak{z}$ satisfies $J^2_Z = - |Z|^2I$ for each $Z\in \mathfrak{z}$.  An {\rm H-type group} is a simply connected Lie group $\mathbb{G}$ whose Lie algebra is of H-type.

Such a group is a Carnot group of step 2 (see \cite{B-L-U} for details).  In particular the Heisenberg group is an H-type group with a one-dimensional centre.  However, there also exist H-type groups with centre of any dimension.
On an H-type group $\mathbb{G}$ we consider vector fields $X_1, \dots, X_m$ which form an orthonormal basis of $\mathfrak{v}$.  The sub-Laplacian (or Kohn operator) is given by $\Delta_\mathbb{G} := \sum_{i=1}^m X_i^2$ and sub-gradient by $\nabla_\mathbb{G} := (X_1,\dots, X_m)$.  The associated Carnot-Carath\'eodory distance is defined by
\[
d(x, y) := \sup\{ f(x) - f(y): f\ {\rm such\ that}\ |\nabla_\mathbb{G} f| \leq 1\}.
\]
It is shown in \cite {H-Z} that conditions {\rm (i)} and {\rm (ii)} of Proposition \ref{U bound prop} are satisfied in this setting.  Moreover, the Lebesgue measure $d\lambda$ satisfies the classical Sobolev inequality \eqref{class sob} and Poincar\'e inequality in balls \eqref{Poincare in ball} with the sub-gradient $\nabla_\mathbb{G}$ (see \cite{Var}).  Thus, by Corollary \ref{conditions} we arrive at the following:
\begin{theorem}\label{thm4.6}
Let $\mathbb{G} = (\mathbb{R}^{m+n}, \circ)$ be an H-type group, equipped with Carnot-Carath\'eodory distance $d$ and canonical sub-gradient $\nabla_\mathbb{G}$ as described above.  Let 
\[
d\mu_p := \frac{e^{-\alpha d^p}}{Z}d\lambda
\]
with $p>1$ and $\alpha>0$ be a probability measure on $\G$
and
\[d{\hat \mu}  = e^{-W-V}/{\hat Z} d\mu_p
\]
with $W\equiv W(d)$ satisfying conditions as  in Corollary \ref{cor4.2} with horizontal gradient 
and $V$ a bounded measurable function.
Then inequalities {\rm (i)-(iii)} of Theorem \eqref{summary} are satisfied with $q$ such that $\frac{1}{p}+\frac{1}{q}=1$.
Moreover for $p\geq 2$, the measure $\hat \mu$ satisfies $LS_q$ and $IFI_2$.
\end{theorem}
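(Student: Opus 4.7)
The plan is to recognize that this theorem is essentially an assembly result: once the geometric ingredients of Corollary \ref{cor4.2} are verified for the sub-Riemannian structure on $\G$, everything falls out from the general machinery of Sections 2--4. I would split the verification into three blocks and then invoke the perturbation result.

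First, I would check the two hypotheses of Proposition \ref{U bound prop} for the Carnot--Carath\'eodory distance $d$ and horizontal gradient $\nabla_\G$. The eikonal-type identity $|\nabla_\G d|=1$ almost everywhere (together with the obvious lower bound) is standard on H-type groups, so condition (i) holds with $\sigma=1$. Condition (ii), namely the estimate $\Delta_\G d \leq K + \alpha p\varepsilon\, d^{p-1}$ on $\{d\geq 1\}$ for a sufficiently small $\varepsilon$, is the nontrivial analytic input and is precisely the computation carried out in \cite{H-Z}; I would quote it. This is where the peculiar geometry of H-type groups (the fact that $J_Z^2=-|Z|^2 I$) is used implicitly, so this is the step I expect to be the main technical obstacle if one had to redo it from scratch, but for the proof of Theorem \ref{thm4.6} it is a citation.

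Next, I would record the purely Euclidean-measure inputs: on $(\R^{m+n}, d\lambda)$ equipped with $\nabla_\G$, the classical Sobolev inequality \eqref{class sob} (with a suitable $\varepsilon>0$ given by the homogeneous dimension) and the Poincar\'e inequality in balls \eqref{Poincare in ball} are known to hold; see \cite{Var}. Together with the compactness property $\{U\leq L\}\subset B(r(L))$ (immediate since $U=\alpha d^p$ is proper), these are exactly the structural assumptions required in the last paragraph of Theorem \ref{summary}.

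With these ingredients in place, Corollary \ref{cor4.2} applies directly to $\mu_p$ and yields statements (i)--(iii) of Theorem \ref{summary} with $q$ the H\"older conjugate of $p$. To pass from $\mu_p$ to $\hat{\mu}$, I would invoke Corollary \ref{cor4.3}: the hypothesis that $W=W(d)$ satisfies the growth conditions of Corollary \ref{cor4.2} means that $|\nabla_\G W|$ and $|W|^\beta$ are controlled by $|U|^\beta+|\nabla_\G U|$, while $V$ is bounded, so Corollaries \ref{perturbation1} and \ref{perturbation2} guarantee that the $L_1\Phi$-entropy inequality, and hence (i)--(iii), survive the perturbation.

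Finally, for $p\geq 2$ we have $q=p/(p-1)\in(1,2]$, which is exactly the range in which the last part of Theorem \ref{summary} upgrades (i)--(iii) to the $LS_q$ inequality (iv) and, via the comparison $\mathcal{U}_2\leq \bar{C}\mathcal{U}_q$ used there, to the functional isoperimetric inequality $IFI_2$ (v). Applying this to $\hat\mu$ completes the proof.
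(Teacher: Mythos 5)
Your proof is correct and follows essentially the same route as the paper: the paper likewise cites \cite{H-Z} for conditions (i)--(ii) of Proposition \ref{U bound prop} and \cite{Var} for the classical Sobolev and Poincar\'e inequalities with the sub-gradient $\nabla_\mathbb{G}$, then invokes Corollary \ref{conditions} (with the perturbation handled by Corollary \ref{cor4.3}) and the $q\in(1,2]$ part of Theorem \ref{summary} for $LS_q$ and $IFI_2$. No discrepancies to report.
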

\end{Example}

\noindent \textbf{U-Bounds versus Gradient Bounds for Heat Kernel.}\label{A.GradBounds} As a conclusion to this section we mention that our setup is naturally inclusive for the following  gradient bounds for the heat kernel on the H-type groups which has recently attracted considerable attention (see e.g. \cite{BBBC, DM, El1, El2, HQL} and references therein).  

Indeed, in the following let $\mathbb{G}$ be an H-type group.

\begin{corollary}
\label{cor4.7}
The semigroup  $P_t\equiv e^{t\Delta_\mathbb{G}}$ satisfies the following 
\[
|\nabla_\mathbb{G}  P_t f|\leq C_1(t)P_t |\nabla_\mathbb{G}  f|\] 
with $C_1(t)\in(0,\infty)$ independent of $f$.
\end{corollary}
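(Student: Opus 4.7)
The plan is to derive the $p=1$ subgradient bound by combining the functional inequalities established in Theorem \ref{thm4.6} with a semigroup interpolation along a perturbation of the heat semigroup. I would work with the probability measure $\mu_2 = e^{-\alpha d^2}/Z \, d\lambda$ (the case $p = 2$ of Example \ref{Ex4.5}), for which Theorem \ref{thm4.6} supplies the $L_1\Phi$-entropy inequality with $\beta = 1/2$ together with $LS_2$, $IFI_2$ and the Cheeger inequality \eqref{cheeger01} on the H-type group $\mathbb{G}$.

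The first step is to introduce the perturbed generator $\tilde L_\alpha = \Delta_\mathbb{G} - \alpha \nabla_\mathbb{G}(d^2)\cdot \nabla_\mathbb{G}$ and its associated semigroup $\tilde P_t^\alpha$, which is symmetric on $L^2(\mu_2)$. A Ledoux-type interpolation of $s \mapsto \tilde P_s^\alpha\bigl(|\nabla_\mathbb{G} \tilde P_{t-s}^\alpha f|\bigr)$, combined with a subelliptic Bochner identity exploiting the H-type condition $J_Z^2 = -|Z|^2 I$ and with the Cheeger inequality provided by Corollary \ref{phi-ent to cheeger}, would yield a perturbed subgradient bound $|\nabla_\mathbb{G} \tilde P_t^\alpha f| \le C(t,\alpha)\,\tilde P_t^\alpha|\nabla_\mathbb{G} f|$. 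The second step is to remove the perturbation. Using the canonical dilations $\delta_\lambda$ of $\mathbb{G}$, which act homogeneously on $\Delta_\mathbb{G}$ and $|\nabla_\mathbb{G}|$, one rescales $f$ together with the parameter $\alpha$ in $\mu_2$; the perturbation stability supplied by Corollary \ref{cor4.3} ensures uniform constants along this rescaled family, and sending $\alpha \to 0$ locally identifies $\tilde P_t^\alpha$ with the free heat semigroup $P_t = e^{t\Delta_\mathbb{G}}$, passing the pointwise bound to $P_t$.

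The main obstacle is controlling the $\alpha\to 0$ limit so that the constant $C_1(t)$ stays finite. The $L_1\Phi$-entropy and Cheeger constants associated with $\mu_2$ depend on $\alpha$ through both the normalisation $Z(\alpha)$ and the $U$-bound constants $A, B$; matching these degenerations against the homogeneity of the group dilations is delicate. In addition, the customary Bakry--\'Emery $\Gamma_2 \ge K\Gamma$ estimate is unavailable in the sub-Riemannian setting, so the interpolation argument must rely on the specific H-type algebraic structure, in the spirit of \cite{DM} and \cite{BBBC}, rather than on any curvature-dimension condition.
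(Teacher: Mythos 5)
Your route is fundamentally different from the paper's and, as it stands, it has two gaps that I do not see how to close. The paper does not interpolate along the semigroup at all: it reduces by group covariance and dilations to the identity element at $t=1$, writes
$\left\vert\int f\nabla_\mathbb{G} h\,d\lambda\right\vert=\left\vert\int (f-<f>)\nabla_\mathbb{G} h\,d\lambda\right\vert\leq\int\vert f-<f>\vert\cdot\vert\nabla_\mathbb{G}\log h\vert\, h\,d\lambda$,
and then feeds Eldredge's precise estimates \eqref{hkb1}--\eqref{hkb2} (in particular $\vert\nabla\log h\vert\leq C(1+d)$) into the $U$-bound machinery of Section \ref{L_1 entropy inequalities from U-bounds} applied to the heat kernel measure $h\,d\lambda$ itself, obtaining a Cheeger inequality for that measure and hence the claim. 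The crucial external input is the pair of heat kernel estimates; your proposal never invokes them and instead works with functional inequalities for $\mu_2=e^{-\alpha d^2}/Z\,d\lambda$, which is not the relevant measure here.

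The first gap is the interpolation step. Differentiating $s\mapsto\tilde P^\alpha_s\vert\nabla_\mathbb{G}\tilde P^\alpha_{t-s}f\vert$ in the Ledoux manner requires a pointwise commutation estimate of Bakry--\'Emery type for the length of the horizontal gradient, and on H-type groups no such estimate is available: this is precisely why the $p=1$ bound was open until \cite{HQL}, and why \cite{BBBC} also proceeds through precise heat kernel estimates rather than through a Bochner or $\Gamma_2$ argument, while \cite{DM} only covers $p>1$. The identity $J_Z^2=-\vert Z\vert^2I$ does not by itself produce the missing commutation, and your perturbed generator carries the additional drift $\nabla_\mathbb{G}(d^2)$ built from the Carnot--Carath\'eodory distance, which is not smooth on the centre, making the interpolation even less tractable. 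The second gap is the limit $\alpha\to0$: the normalisation $Z(\alpha)$ diverges and the $U$-bound and Cheeger constants for $\mu_2$ degenerate, so the constant $C(t,\alpha)$ has no a priori uniform bound; you identify this as the main obstacle but offer no mechanism to control it, and Corollary \ref{cor4.3} does not supply one, since it handles perturbations of a fixed reference measure rather than a family whose log-density vanishes. I would redo the argument along the paper's lines: verify the hypotheses of Theorem \ref{cheeger from U-bound} for the measure $h\,d\lambda$ using \eqref{hkb1}--\eqref{hkb2}, deduce the Cheeger inequality for $h\,d\lambda$, and conclude via the integration-by-parts identity above.
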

Due to the group covariance, it is sufficient to show the bound at the identity element
and, thanks to an action of the dilations, one only needs to establish it at $t=1$. Denoting
the corresponding heat kernel by $h$, we see that a bound of the following quantity is necessary
\[ \left\vert\int f\nabla_\mathbb{G} h d\lambda\right\vert =  \left\vert\int (f - <f>)\nabla_\mathbb{G} h d\lambda\right\vert  \leq \int |f - <f>| \cdot |\nabla_\mathbb{G} \log h| h d\lambda 
\]
with $<f>\equiv \int f h d\lambda$.
If one has a bound of the form
\[
|\nabla_\mathbb{G} \log h| \leq V(d)
\]
with a function $V$ growing to infinity and for which the following $U$-bound is satisfied
\[
\int |f - <f>| \cdot V(d) h d\lambda \leq C  \int |\nabla_\mathbb{G} f  |  h d\lambda + D \int |f - <f>|  h d\lambda
\]
with some $C,D\in[0,\infty)$ independent of $f$, then  -- as we have argued in the previous sections -- 
one can show the following Cheeger type bound
\[\int |f - <f>|  h d\lambda \leq \alpha  \int |\nabla_\mathbb{G} f  |  h d\lambda.
\]  
Consequently we arrive at
\[ \left\vert\int f\nabla_\mathbb{G} h d\lambda\right\vert\leq (C+D\alpha) \int |\nabla_\mathbb{G} f  |  h d\lambda.
\]
Thanks to the following heat kernel bounds of \cite{El1} (see also \cite{HQL} and \cite{B-G-G})
\begin{align}\label{hkb1}
 h(x,z)  &\asymp \frac{1 +(d(0, (x,z)))^{2n-m-1}}{1 +(|x|d(0, (x,z)))^{n-1/2}}
e^{-\frac14 d(0,(x,z))^2}\\
\label{hkb2}
|\nabla \log h(x,z)| &\leq  C (1+ d(0, (x,z))) ,  \  
\end{align}
we see that this strategy can be realised positively.
While the gradient bounds still remain a challenge for more complicated groups,
it may be useful to keep this observation in mind, as in principle it allows for a heat kernel bound \eqref{hkb1} with far less precise description of the slowly varying factor, (provided the corresponding control distance $d$ satisfies a sufficiently good Laplacian bound outside some compact set).
\\

\noindent \textbf{U-Bounds versus Integrated Gaussian Bounds for Heat Kernel.}\label{B.GaussBounds}

\noindent Assuming a bound of the following form
\begin{equation}
\label{assume bound}
\mu (f d) \leq C \mu |\nabla f| + D \mu (f),
\end{equation}
for a function $f= e^{\lambda \min(d,L)^2}$, we get
\begin{align*}\mu \left(e^{\lambda \min(d,L)^2} \min(d,L)\right) &\leq 2\lambda C \mu \left(e^{\lambda \min(d,L)^2}\min(d,L)|\nabla \min(d,L)|\right) \\
& \quad + D \mu \left(e^{\lambda \min(d,L)^2}\right) \\
&\leq 2\lambda C  \mu \left(e^{\lambda \min(d,L)^2}\min(d,L)\right)  + D \mu \left(e^{\lambda \min(d,L)^2}\right).
\end{align*}
If $2\lambda C < 1$, this implies 
\begin{equation}\label{B.1}
\mu \left(e^{\lambda \min(d,L)^2} \min(d,L)\right) \leq D' \mu \left(e^{\lambda \min(d,L)^2}\right)
\end{equation}
with $D'\equiv D(1-2\lambda C)^{-1}$.
Next, choosing $f= e^{\lambda \min(d,L)^2}\min(d,L)$ instead in \eqref{assume bound}, we obtain
\begin{align*}
\mu \left(e^{\lambda \min(d,L)^2} \min(d,L)^2\right) &\leq C  \mu \left|\nabla \left(e^{\lambda \min(d,L)^2} \min(d,L)\right)\right| + D \mu \left(e^{\lambda \min(d,L)^2} \min(d,L)\right) \\
&\leq 2\lambda C  \mu \left(e^{\lambda \min(d,L)^2} \min(d,L)^2\right) + D  \mu  \left(e^{\lambda \min(d,L)^2} \min(d,L)\right)\\
& \quad + C \mu \left(e^{\lambda \min(d,L)^2 }\right).
\end{align*} 
Thus using \eqref{B.1}, we obtain 
\begin{align*}\mu \left(e^{\lambda \min(d,L)^2} \min(d,L)^2\right) &\leq 2\lambda C  \mu \left(e^{\lambda \min(d,L)^2} \min(d,L)^2\right)
& + (D'+C) \mu \left(e^{\lambda \min(d,L)^2 }\right).\\
\end{align*}
Rearranging this, for $2\lambda C\leq 2\lambda_0C<1$,
\[ \frac{d}{d\lambda} \mu \left(e^{\lambda \min(d,L)^2}\right) = \mu \left(e^{\lambda \min(d,L)^2} \min(d,L)^2\right) \leq D'' \mu \left(e^{\lambda \min(d,L)^2 }\right)
\]
with $D''\equiv (D'+C)(1-2\lambda_0C)^{-1}$. Solving this differential inequality
and passing with $L\to\infty$, we arrive at the following:
\begin{theorem}\label{thm4.8}
$($\textbf{\!\! Integrated Gaussian Bound }$)$\\
Suppose the following is true
\[ \mu (f d) \leq C \mu |\nabla f| + D \mu (f)
\]
with some constants $C,D\in(0,\infty)$.
Then
\[ \mu \left(e^{\lambda d^2}\right) \leq e^{\lambda D''}
\]
for $2\lambda C\leq 2\lambda_0C<1$ with some constant $D''\in(0,\infty)$.  
\end{theorem}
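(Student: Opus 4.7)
The plan is to substitute two carefully chosen test functions into the hypothesis and thereby derive a Gronwall-type differential inequality for $\phi_L(\lambda) := \mu(e^{\lambda \min(d,L)^2})$, then pass to the limit $L \to \infty$ by monotone convergence. The truncation $\min(d,L)$ is introduced precisely so that the test functions are bounded and locally Lipschitz (so that they are admissible in the assumed inequality) and so that $\phi_L(\lambda)$ is finite and differentiable in $\lambda$ for every $L$; the uniform bound transfers to the limit.

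First, I would apply the hypothesis $\mu(fd) \le C\mu|\nabla f| + D\mu(f)$ to $f = e^{\lambda \min(d,L)^2}$. Using $|\nabla \min(d,L)| \le |\nabla d| \le 1$ a.e.\ (a standard property for distance functions in this setting, also implicit in the previous sections), one gets $|\nabla f| \le 2\lambda\,\min(d,L)\,e^{\lambda \min(d,L)^2}$. Since $\min(d,L) \le d$ everywhere, absorbing the resulting $2\lambda C\,\mu(e^{\lambda\min(d,L)^2}\min(d,L))$ term into the left-hand side (valid as long as $2\lambda C<1$) yields
\begin{equation*}
\mu\!\left(e^{\lambda \min(d,L)^2}\min(d,L)\right) \le D'\,\mu\!\left(e^{\lambda \min(d,L)^2}\right), \qquad D' = \tfrac{D}{1-2\lambda C}.
\end{equation*}

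Next I would feed $f = e^{\lambda \min(d,L)^2}\min(d,L)$ into the hypothesis. Here $|\nabla f|$ produces two contributions: $2\lambda \min(d,L)^2\,e^{\lambda \min(d,L)^2}$ and $e^{\lambda \min(d,L)^2}$. Again using $\min(d,L) \le d$ on the left-hand side, then using the bound from the first step to control the mixed term $\mu(e^{\lambda\min(d,L)^2}\min(d,L))$, one arrives, uniformly for $2\lambda C \le 2\lambda_0 C < 1$, at
\begin{equation*}
\mu\!\left(e^{\lambda \min(d,L)^2}\min(d,L)^2\right) \le D''\,\mu\!\left(e^{\lambda \min(d,L)^2}\right),
\end{equation*}
with $D'' = (D'+C)/(1-2\lambda_0 C)$. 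Observing that $\min(d,L)^2 e^{\lambda \min(d,L)^2} = \partial_\lambda e^{\lambda \min(d,L)^2}$ and that for fixed $L$ one may differentiate under the integral sign (the integrand is bounded by $e^{\lambda_0 L^2}$), this becomes the differential inequality $\phi_L'(\lambda) \le D''\,\phi_L(\lambda)$ with $\phi_L(0)=1$.

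Integrating yields $\phi_L(\lambda) \le e^{\lambda D''}$ for all $\lambda \in [0,\lambda_0]$ and all $L>0$. Since $e^{\lambda \min(d,L)^2} \uparrow e^{\lambda d^2}$ as $L\to\infty$, monotone convergence gives the claimed bound $\mu(e^{\lambda d^2}) \le e^{\lambda D''}$. The only delicate point, and the one worth stating carefully, is the admissibility of the test functions: the cutoff ensures both boundedness and that $|\nabla f|$ is well controlled by $|\nabla d|\le 1$, while the uniform-in-$L$ constant $D''$ (depending only on $\lambda_0$, $C$, $D$) is what survives the limit. Everything else is the routine Gronwall step already sketched in the paragraphs preceding the theorem.
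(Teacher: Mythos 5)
Your proof is correct and follows essentially the same route as the paper's own argument preceding the theorem: the same two test functions $e^{\lambda\min(d,L)^2}$ and $e^{\lambda\min(d,L)^2}\min(d,L)$, the same absorption steps yielding $D'$ and $D''$, and the same Gronwall/limit conclusion. You merely make explicit a few points the paper leaves implicit (the use of $\min(d,L)\le d$, differentiation under the integral, and monotone convergence as $L\to\infty$).
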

$\ $\\
\noindent
See Appendix 1 for some generalisation of this idea. \\

\noindent \textbf{From Gradient Bounds for Heat Kernel to U-Bounds.}\label{C.GradBds4HK2UBds}
From the point of view of the computations of \cite{H-Z}  we start with
\[ h\nabla f = \nabla(fh) - f\nabla h 
\]
and, with a unitary linear functional $\boldsymbol{\alpha}$, we get
\[ \int \boldsymbol{\alpha}(\nabla f) hd\lambda = \int \boldsymbol{\alpha}\left(\nabla (fh)\right) d\lambda
+ \int f \boldsymbol{\alpha}(\nabla \log \frac1h) h d\lambda.
\]
Hence, one gets
\[ \int f \left(\boldsymbol{\alpha}(\nabla \log \frac1h)   - div \boldsymbol{\alpha}\right) hd\lambda
\leq \int |\nabla f| \cdot |\boldsymbol{\alpha}| hd\lambda.
\]
If the expression in the bracket on the left-hand side can be shown to have a treatable bound from below, such a bound can be a useful source of analysis.\\


\section{Extension to infinite dimensions}
\label{Sec5.Extension to infinite dimensions}
In this section we aim to extend the $L_1\Phi$-entropy inequality to the infinite dimensional setting, where we include some bounded interactions.  The setup will be as follows.
~

\paragraph{\textit{The Spin Space:}} Let $\mathcal{M} = (\R^N, d)$ be a metric space equipped with Lebesgue measure $d\lambda$, general sub-gradient $\nabla = (X_1, \dots, X_m)$ consisting of divergence free (possibly non-commuting) vector fields and sub-Laplacian $\Delta := \sum_{i=1}^mX_i^2$, as above.

\paragraph{\textit{The Lattice:}} Let $\Z^D$ be the $D$-dimensional lattice for some fixed $D\in\mathbb{N}$, equipped with the lattice metric $dist(\cdot, \cdot)$ defined by
\[
dist({\bf i},{\bf j}) := \sum_{l=1}^D|i_l - j_l|
\]
for ${\bf i} = (i_1, \dots, i_D), {\bf j} = (j_1, \dots, j_D)\in\Z^D$.  For ${\bf i},{\bf j}\in\Z^D$ we will also write 
\[
{\bf i} \sim {\bf j} \qquad \Leftrightarrow \qquad dist({\bf i},{\bf j}) = 1
\]
i.e. ${\bf i}\sim {\bf j}$ when ${\bf i}$ and ${\bf j}$ are nearest neighbours in the lattice.  For $\Lambda \subset \mathbb{Z}^D$, we will write $\Lambda^c\equiv  \mathbb{Z}^D\setminus\Lambda$, $|\Lambda|$ for the cardinality of $\Lambda$, and $\Lambda \subset \subset \mathbb{Z}^D$ when $|\Lambda|<\infty$.

\paragraph{\textit{The Configuration Space:}}
Let $\Omega := \left(\mathcal{M}\right)^{\Z^D}$ be the \textit{configuration space}.  Given $\Lambda \subset \mathbb{Z}^D$ and $\omega = (\omega_{\bf i})_{{\bf i}\in\mathbb{Z}^D}\in\Omega$, let $\omega_\Lambda := (\omega_{\bf i})_{{\bf i}\in\Lambda} \in\left(\mathcal{M}\right)^\Lambda$ (so that $\omega\mapsto \omega_\Lambda$ is the natural projection of $\Omega$ onto $\mathcal{M}^\Lambda$).

Given $\omega\in\Omega$ we introduce the injection: $\mathcal{M}^\Lambda \to \Omega$, defined by $\eta \in \mathcal{M}^\Lambda \mapsto \eta\bullet_\Lambda\omega$ where $(\eta\bullet_\Lambda\omega)_{\bf i
} = \eta_{\bf i}$ when ${\bf i }\in\Lambda$ and $(\eta\bullet_\Lambda\omega)_{\bf i} = \omega_{\bf i}$ when ${\bf i}\in\Lambda^c$. 

Let $f\colon \Omega \to \mathbb{R}$.  Then for ${\bf i}\in\mathbb{Z}^D$ and $\omega \in \Omega$ define $f_{\bf i}(\cdot|\omega)\colon\mathcal{M} \to \mathbb{R}$ by
\[
f_{\bf i}(x|\omega) := f(x\bullet_{\{{\bf i}\}}\omega).
\]
Let $C^{(n)}(\Omega)$, $n\in\mathbb{N}$ denote the set of all functions $f$ for which we have $f_{\bf i}(\cdot|\omega) \in C^{(n)}(\mathcal{M})$ for all ${\bf i}\in\mathbb{Z}^D$ .  For ${\bf i}\in\mathbb{Z}^D, k\in\{1,\dots, m\}$ and $f\in C^{(1)}(\Omega)$, define
$$X_{{\bf i},k}f(\omega) := X_kf_{\bf i}(x|\omega)\vert_{x = \omega_{\bf i}},$$
where $X_1,\dots,X_m$ are the vector fields on $\mathcal{M}$.

Define similarly $\nabla_{\bf i}f(\omega) := \nabla f_{\bf i}(x|\omega)\vert_{x = \omega_{\bf i}}$ and $\Delta_{\bf i}f(\omega) := \Delta f_{\bf i}(x|\omega)\vert_{x = \omega_{\bf i}}$ for suitable $f$, where $\nabla$ and $\Delta$ are the sub-gradient and the sub-Laplacian on $\mathcal{M}$ respectively.  For $\Lambda \subset \Z^D$, set $\nabla_\Lambda f = (\nabla_{\bf i}f)_{{\bf i}\in\Lambda}$ and
$$|\nabla_\Lambda f| := \sum_{{\bf i}\in\Lambda}|\nabla_{\bf i}f|.$$

Finally, a function $f$ on $\Omega$ is said to be \textit{localised} in a set $\Lambda\subset\Z^D$ if $f$ is only a function of those coordinates in $\Lambda$.

\paragraph{\textit{Local Specification and Gibbs Measure:}}
Let $\Psi = (\psi_{X})_{X\subset\subset\Z^D}$ be a family of $C^2$ functions such that $\psi_{X}$ is localised in $X\subset\subset\Z^D$.  Assume that $\psi_X\equiv 0$ whenever the diameter of $X$ is greater than positive constant $R$. We will also assume that there exists a constant $M\in(0,\infty)$ such that $\|\psi_{X}\|_\infty \leq M$ and $\|\nabla_{\bf i}\psi_{X}\|_\infty\leq M$ for all ${\bf i}\in \Z^D$.  We say $\Psi$ is a bounded potential of range $R$.  For $\omega\in\Omega$, define
$$H_\Lambda^\omega(x_\Lambda) = \sum_{\Lambda\cap X\neq\emptyset}\psi_{X}\left(x_\Lambda\bullet_\Lambda\omega\right),$$
for $x_\Lambda = (x_{\bf i})_{{\bf i}\in\Lambda} \in \mathcal{M}^\Lambda$.

Let $U$ be a locally Lipschitz function on $\mathcal{M}$ which is bounded from below and such that $\int_\mathcal{M}e^{-U}d\lambda <\infty$.  Suppose also that $\forall L\geq0$ there exists $r=r(L)$ such that
\[
\{U\leq L\} \subset B(r).
\]

Let $d\mu = \frac{e^{-U}}{Z}d\lambda$, so that $\mu$ is a probability measure on $\mathcal{M}$, and let
\[
\mu_\Lambda(dx_\Lambda) := \otimes_{{\bf i}\in\Lambda}\mu(dx_{\bf i})
\]
be the product measure on $\mathcal{M}^\Lambda$.
Now define
\begin{equation}
\label{local spec}
\E^{\omega}_\Lambda(dx_\Lambda)=\frac{e^{JH^{\omega}_\Lambda(x_\Lambda)}}{\int e^{JH^{\omega}_\Lambda(x_\Lambda)}\mu_\Lambda (dx_\Lambda)} \mu_\Lambda(dx_{\Lambda})\equiv\frac{e^{JH^{\omega}_\Lambda(x_\Lambda)}}{Z^\omega_\Lambda} \mu_\Lambda(dx_{\Lambda})
\end{equation}
for $J \in\R$.  We will write $\mu_{\{{\bf i}\}} = \mu_{\bf i}$ and $\E^{\omega}_{\{{\bf i}\}} = \E^{\omega}_{\bf i}$ for ${\bf i} \in \Z^D$.  We finally define an infinite volume \textit{Gibbs measure} $\nu$ on $\Omega$ to be a solution of the (DLR) equation:
\begin{equation}
\label{DLR}
\nu \E^{\cdot}_\Lambda f = \nu f
\end{equation}
for all bounded measurable functions $f$ on $\Omega$.  $\nu$ is a measure on $\Omega$ which has $\E^{\omega}_\Lambda$ as its finite volume conditional measures.

~

Following for example \cite{G-Z}, \cite{I-P}, the extension of Theorem \ref{main} to this infinite dimensional setting will take the following form.

\begin{theorem}\label{5.1}
\label{beta LS for Gibbs thm}
Suppose that the classical Sobolev inequality \eqref{class sob} and that the Poincar\'e inequality in balls \eqref{Poincare in ball} are both satisfied.  Suppose also that inequality \eqref{U-bound} is satisfied, i.e. there exist constants $A, B\in(0,\infty)$ such that
\[
\mu\left(|f|\left(|U|^\beta + |\nabla U|\right)\right) \leq A\mu|\nabla f| + B\mu|f|
\]
for some $\beta\in(0,1]$ and locally Lipschitz functions $f:\mathcal{M}\to\R$.  Then there exists $J_0>0$ such that for $|J|<J_0$, the Gibbs measure $\nu$ is unique and there exists a constant $C$ such that
\begin{equation}
\label{beta LS for GIbbs}
\mathbf{Ent}_\nu^\Phi(|f|) \leq C\nu\left(\sum_{{\bf i}\in\Z^D}|\nabla_{\bf i} f|\right),
\end{equation}
where $\Phi(x) = x\left(\log(1+x)\right)^\beta$, for all $f$ for which the right-hand side is well defined.
\end{theorem}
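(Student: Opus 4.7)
The plan is to combine the one-site $L_1\Phi$-entropy inequality already proved with a Zegarli{\'n}ski-type sweeping-out argument based on the DLR equation, closed up by means of a Dobrushin-type gradient contraction valid for small $|J|$. First, the reference measure $\mu$ on $\mathcal{M}$ satisfies the $L_1\Phi$-entropy inequality directly by Theorem \ref{main}, since \eqref{U-bound}, the classical Sobolev inequality \eqref{class sob} and the Poincar\'e inequality in balls \eqref{Poincare in ball} all hold by hypothesis. The single-site conditional measure $\E^{\omega}_{\{\mathbf{i}\}}$ differs from $\mu$ by a \emph{bounded} exponential factor: because $\psi_X\equiv 0$ whenever $\mathrm{diam}(X)>R$ and $\|\psi_X\|_\infty\le M$, the number of sets $X\ni\mathbf{i}$ contributing to $H^{\omega}_{\{\mathbf{i}\}}$ is bounded by a constant $\kappa=\kappa(R,D)$, and hence $\mathrm{osc}(JH^{\omega}_{\{\mathbf{i}\}})\le 2|J|M\kappa$ uniformly in $\mathbf{i}$ and $\omega$. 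Corollary \ref{perturbation2}, applied with $W=0$ and $V=JH^{\omega}_{\{\mathbf{i}\}}$, then yields a uniform one-site inequality
\[
\mathbf{Ent}_{\E^{\omega}_{\{\mathbf{i}\}}}^\Phi(|f|)\le c_1\,\E^{\omega}_{\{\mathbf{i}\}}|\nabla_{\mathbf{i}} f|,
\]
with $c_1$ independent of $\mathbf{i}$ and $\omega$.

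Next I would use the DLR equation \eqref{DLR} to unfold the entropy site by site. Fix an enumeration $\mathbf{i}_1,\mathbf{i}_2,\ldots$ of $\Z^D$, write $P_k=\E^{\cdot}_{\{\mathbf{i}_k\}}$, $P^{(0)}=\mathrm{Id}$ and $P^{(n)}=P_n\cdots P_1$. Since $\nu P_k=\nu$, the elementary identity $\mathbf{Ent}_\nu^\Phi(g)=\nu(\mathbf{Ent}_{P_k}^\Phi(g))+\mathbf{Ent}_\nu^\Phi(P_k g)$, iterated along $g=P^{(k-1)}f$, gives for every $n$
\[
\mathbf{Ent}_\nu^\Phi(f)=\sum_{k=1}^{n}\nu\bigl(\mathbf{Ent}_{P_k}^\Phi(P^{(k-1)}f)\bigr)+\mathbf{Ent}_\nu^\Phi(P^{(n)}f).
\]
Each summand is bounded by $c_1\,\nu|\nabla_{\mathbf{i}_k}P^{(k-1)}f|$ via the inequality of the previous paragraph, so everything reduces to controlling the series $\sum_k\nu|\nabla_{\mathbf{i}_k}P^{(k-1)}f|$ and showing that the remainder tends to zero.

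The hard part will be the gradient contraction. Differentiating $P_k g$ and using the uniform bounds $\|\nabla_{\mathbf{j}}\psi_X\|_\infty\le M$ together with the one-site Cheeger inequality implicit in Corollary \ref{perturbation2} gives a one-site commutation estimate of the form
\[
|\nabla_{\mathbf{j}}P_k g|\le P_k|\nabla_{\mathbf{j}} g|+|J|M_0\,\mathbf{1}_{\{0<\mathrm{dist}(\mathbf{j},\mathbf{i}_k)\le R\}}\,P_k|\nabla_{\mathbf{i}_k}g|,
\]
with $M_0=M_0(M,R,D)$. Iterating this estimate along $P^{(k-1)}$ produces, for each fixed $\mathbf{j}$, a Dobrushin-type multi-sum that is dominated by a geometric series as soon as $|J|M_0\cdot|\{\mathbf{l}:\mathrm{dist}(\mathbf{j},\mathbf{l})\le R\}|<1$; this is precisely the quantitative form of the threshold $J_0$, and produces a constant $K\in(0,\infty)$ with
\[
\sum_{k=1}^{\infty}\nu\bigl|\nabla_{\mathbf{i}_k}P^{(k-1)}f\bigr|\le K\,\nu\!\left(\sum_{\mathbf{j}\in\Z^D}|\nabla_{\mathbf{j}} f|\right).
\]
The same smallness of $|J|$ ensures uniqueness of $\nu$ together with the pointwise contraction $P^{(n)}f\to\nu f$ on local functions, so $\mathbf{Ent}_\nu^\Phi(P^{(n)}f)\to 0$ by continuity of $\Phi$ and the uniform integrability following from the one-site inequality. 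Combining these three ingredients delivers \eqref{beta LS for GIbbs} for cylinder functions, and a standard cylinder-approximation argument extends it to all $f$ for which the right-hand side is finite. The principal obstacle is the one-site gradient commutation estimate above: its constants must be sharp enough for the multi-site iteration to produce a convergent series, and it is exactly this requirement that fixes the threshold $J_0$ in the statement.
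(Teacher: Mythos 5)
Your overall strategy is the one the paper uses -- a uniform one-site $L_1\Phi$-entropy inequality obtained from Theorem \ref{main} plus bounded perturbation, a telescoping of the entropy via the DLR equation, and a gradient ``sweeping-out'' estimate that contracts for small $|J|$ -- except that you sweep site by site along an enumeration of $\Z^D$, whereas the paper splits $\Z^2$ into two sublattices $\Gamma_0,\Gamma_1$ on which the conditional measures are products (so the tensorised one-site inequality applies blockwise) and iterates $\mathcal{P}=\E_{\Gamma_1}\E_{\Gamma_0}$. Your one-site commutation estimate is exactly the paper's Lemma \ref{gradient terms} in disaggregated form: the covariance term produced by differentiating the conditional density is controlled by $\|\nabla_{\mathbf j}\psi_X\|_\infty\le M$ together with the one-site Cheeger inequality (which, to be precise, comes from Theorem \ref{cheeger from U-bound} and a bounded-perturbation argument, not from Corollary \ref{perturbation2} alone -- that corollary \emph{assumes} Cheeger rather than producing it).

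There is, however, one genuine gap: the remainder term. If $\mathbf{i}_1,\mathbf{i}_2,\dots$ is an enumeration of $\Z^D$ in the usual sense (each site visited exactly once), then $P^{(n)}f$ does \emph{not} converge to the constant $\nu f$, and $\mathbf{Ent}_\nu^\Phi(P^{(n)}f)$ does not tend to zero. The reason is that although $P_k$ removes the dependence of the function on $\omega_{\mathbf{i}_k}$, any later $P_l$ with $\mathrm{dist}(\mathbf{i}_l,\mathbf{i}_k)\le R$ reintroduces it through the $\omega_{\mathbf{i}_k}$-dependence of the conditional density $e^{JH^{\omega}_{\{\mathbf{i}_l\}}}/Z^{\omega}_{\{\mathbf{i}_l\}}$; since each site has only finitely many neighbours, after a single pass the oscillation of $P^{(n)}f$ in the coordinate $\omega_{\mathbf{i}_k}$ stabilises at a value of order $|J|$ times the oscillation of $f$, which is nonzero no matter how small $|J|$ is. To make the remainder vanish you must visit every site infinitely often -- equivalently, iterate full sweeps, which is precisely why the paper works with $\mathcal{P}^r$ as $r\to\infty$ (Lemma \ref{convergence}). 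With that modification your Dobrushin-type summation still closes: each additional sweep contributes a factor $\varepsilon<1$ to the gradient sum (as in the paper's estimates \eqref{first term}--\eqref{second term}), so the series over all visits remains geometric and the same threshold $J_0$ suffices. As written, though, the step ``$\mathbf{Ent}_\nu^\Phi(P^{(n)}f)\to 0$'' fails.
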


For notational simplicity, we will only prove Theorem \ref{beta LS for Gibbs thm} in the case $R=1$ and $D=2$, but the method can easily be extended to general $R$ and $D$, (see e.g. \cite{G-Z} for the idea of the general scheme).

Define the sets
\begin{align*}
&\Gamma_0=(0,0)\cup\{{\bf j} \in \mathbb{Z}^2 : dist({\bf j},(0,0))=2n \text{\; for some \;}n\in\mathbb{N}\}, \\  
&\Gamma_1=\mathbb{Z}^2\smallsetminus\Gamma_0 .
\end{align*}
Note that $dist({\bf i},{\bf j})>1$ for all ${\bf i,j} \in\Gamma_k,k=0,1$ and $\Gamma_0\cap\Gamma_1=\emptyset$.  Moreover $\mathbb{Z}^2=\Gamma_0\cup\Gamma_1$.  For the sake of notation, we will write $\mathbb{E}_{\Gamma_k}=\mathbb{E}_{\Gamma_k}^{\omega}$ for $k=0,1$. We will also define 
$$ \mathcal{P}:=\mathbb{E}_{\Gamma_1}\mathbb{E}_{\Gamma_{0}}.$$

The proof will rely on the following few Lemmata.

\begin{lemma}\label{lem5.2}
\label{single site lem} 
Under the conditions of Theorem \ref{beta LS for Gibbs thm}, there exist constants $\hat{c}_0$ and $\hat{c}$ independent of ${\bf i}\in\Z^D$ and $\omega\in\Omega$ such that
\begin{equation}
\label{cheeger for single site}
\E_{\bf i}^\omega\left|f - \E_{\bf i}^\omega f\right| \leq \hat{c}_0\E_{\bf i}^\omega|\nabla_{\bf i} f|
\end{equation}
and
\begin{equation}
\label{beta LS for single site}
\mathbf{Ent}_{\E_{\bf i}^\omega}^\Phi(|f|) \leq \hat{c}\E_{\bf i}^\omega|\nabla_{\bf i} f|
\end{equation}
for all ${\bf i}\in\Z^D$ and $\omega\in\Omega$.
\end{lemma}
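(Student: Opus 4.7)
The plan is to view each single-site conditional measure $\E_{\bf i}^\omega$ as a bounded perturbation of the reference measure $\mu$ on $\mathcal M$ and then invoke the perturbation results of Section \ref{L_1 entropy inequalities from U-bounds}. Unfolding the definition \eqref{local spec}, we have
\[
\E_{\bf i}^\omega(dx) = \frac{e^{-U(x)+JH^\omega_{\{{\bf i}\}}(x)}}{Z'_{{\bf i},\omega}}\,d\lambda(x),
\]
and in the case $R=1$, $D=2$ the function $H^\omega_{\{{\bf i}\}}(x)=\psi_{\{{\bf i}\}}(x)+\sum_{{\bf j}\sim{\bf i}}\psi_{\{{\bf i},{\bf j}\}}(x,\omega_{\bf j})$ involves at most $2D+1$ terms, each bounded by $M$. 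Hence $\|JH^\omega_{\{{\bf i}\}}\|_\infty \leq |J|(2D+1)M$, uniformly in ${\bf i}\in\Z^D$ and $\omega\in\Omega$.

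Under the hypotheses of Theorem \ref{beta LS for Gibbs thm}, the reference measure $\mu$ on $\mathcal M$ already satisfies the classical Sobolev inequality \eqref{class sob}, the $U$-bound \eqref{U-bound}, and conditions (a), (b) of Theorem \ref{cheeger from U-bound}; so Theorem \ref{main} applies and gives both the Cheeger inequality \eqref{cheeger} for $\mu$ and the $L_1\Phi$-entropy inequality for $\mu$. These are exactly the assumptions of Corollary \ref{perturbation2}. I would then apply that corollary with the trivial choice $W\equiv 0$ (which satisfies its growth conditions vacuously) and $V:=-JH^\omega_{\{{\bf i}\}}$, for which
\[
osc(V)\leq 2|J|(2D+1)M.
\]
This yields the inequality \eqref{beta LS for single site} for $\E_{\bf i}^\omega$ with a constant $\hat c$ depending only on the constants of Theorem \ref{defective beta LS}, the Cheeger constant of $\mu$, and $osc(V)$; since all three are independent of ${\bf i}$ and $\omega$, so is $\hat c$. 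The Cheeger-type bound \eqref{cheeger for single site} then follows immediately by applying Corollary \ref{phi-ent to cheeger} to the measure $\E_{\bf i}^\omega$, giving a constant $\hat c_0$ that is likewise uniform in ${\bf i}$ and $\omega$.

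The only point requiring attention is the uniformity of the constants in ${\bf i}$ and $\omega$, which is precisely where the $L^\infty$-boundedness and finite-range assumptions on the potential $\Psi$ enter in an essential way: it is through these that $osc(V)$ is controlled by universal data rather than by $\omega$-dependent quantities. For general $R$ and $D$ the same argument applies verbatim, with $2D+1$ replaced by the (finite, dimension- and range-dependent) number of lattice sites within distance $R$ of $\bf i$. I do not expect any genuine obstacle beyond this bookkeeping, since the hard analytic work — the passage from $U$-bounds to $L_1\Phi$-entropy and the Rothaus-type tightening — has already been performed in Section \ref{L_1 entropy inequalities from U-bounds}.
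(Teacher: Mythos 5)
Your proposal is correct, and for the $L_1\Phi$-entropy bound \eqref{beta LS for single site} it coincides with the paper's argument: both treat $\E_{\bf i}^\omega$ as a bounded perturbation of the product reference measure (the interaction term $JH^\omega_{\{{\bf i}\}}$ having oscillation controlled uniformly in ${\bf i}$ and $\omega$ by the finite range and $L^\infty$ bounds on $\Psi$), establish the unperturbed inequality via Theorem \ref{main}, and then invoke Corollary \ref{perturbation2} with $W\equiv 0$ and $V=-JH^\omega_{\{{\bf i}\}}$. Where you diverge is the Cheeger bound \eqref{cheeger for single site}: the paper first obtains $\mu_{\bf i}|f-\mu_{\bf i}f|\leq c_0\mu_{\bf i}|\nabla_{\bf i}f|$ from Theorem \ref{cheeger from U-bound} and then perturbs \emph{that} inequality directly, using the standard stability of Poincar\'e/Cheeger-type inequalities under bounded perturbations of the density (a purely measure-theoretic two-sided estimate on $d\E_{\bf i}^\omega/d\mu_{\bf i}$), whereas you deduce it \emph{a posteriori} from the perturbed $L_1\Phi$-entropy inequality via Corollary \ref{phi-ent to cheeger}. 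Your route is logically sound and gives uniform constants, but it passes through the isoperimetric machinery of Section \ref{Sec3.Isoperimetric Inequalities}, which presupposes the compatibility \eqref{metricgrad} between the sub-gradient and the metric (and non-atomicity); this holds in the intended H-type applications with the Carnot--Carath\'eodory distance but is an extra structural hypothesis not needed by the paper's more elementary direct perturbation. It would be worth noting this dependence, or simply perturbing the Cheeger inequality directly as the paper does.
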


\begin{proof}
Firstly, by Theorem \ref{cheeger from U-bound}, we have that there exists a constant $c_0$ independent of ${\bf i}$ such that
\[
\mu_{\bf i}\left|f-\mu_{\bf i}f| \leq c_0\mu_{\bf i}|\nabla_{\bf i} f\right|.
\]
Since
\[
osc\left(H_{\bf i}^\omega\right) \leq 2\|H_{\bf i}^\omega\|_\infty \leq 2\sum_{\{{\bf i}\}\cap X \neq \emptyset}\|\psi_X\|_\infty\leq 8M.
\]
by a standard result about bounded perturbations of Poincar\'e type inequalities (see \cite{B-Z}), inequality \eqref{cheeger for single site} holds.

Moreover, by the assumptions and Theorem \ref{main}, we have
\[
\Ent_{\mu_{\bf i}}^\Phi(|f|) = \mu_{\bf i}(\Phi(|f|)) - \Phi(\mu_{\bf i} |f|) \leq c \mu_{\bf i}|\nabla_{\bf i} f|
\]
for all ${\bf i} \in \Z^D$.  Thus by the bounded perturbation Corollary \ref{perturbation2}, \eqref{beta LS for single site} holds.
\end{proof}

\begin{lemma}\label{lem5.3}
\label{gradient terms}
Under the conditions of Theorem \ref{beta LS for Gibbs thm}, there exists $J_0>0$ such that for $|J|<J_0$, there exists a constant and $\varepsilon\in(0,1)$ such that
\[
\nu\left|\nabla_{\Gamma_k}\left(\E_{\Gamma_l}f\right)\right| \leq \nu\left|\nabla_{\Gamma_k} f\right| + \varepsilon\nu\left|\nabla_{\Gamma_1}f\right|
\]
for $k,l\in\{0,1\}$ such that $k\neq l$.
\end{lemma}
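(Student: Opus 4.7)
\textbf{Proof proposal for Lemma \ref{lem5.3}.} The strategy exploits the fact that the partition $\{\Gamma_0,\Gamma_1\}$ is chosen precisely so that no two points of the same $\Gamma_l$ are nearest neighbours. Since the potential has range $R=1$, the Hamiltonian $H^\omega_{\Gamma_l}(x_{\Gamma_l})$ is, as a function of $x_{\Gamma_l}$, a sum of single-site terms (the pair terms $\psi_{\{\mathbf{i},\mathbf{j}\}}$ with $\mathbf{i}\in\Gamma_l$ and $\mathbf{j}\in\Gamma_l$ vanish because such $\mathbf{i},\mathbf{j}$ are never nearest neighbours). Consequently the conditional measure factorises,
\[
\E^{\omega}_{\Gamma_l}\;=\;\bigotimes_{\mathbf{j}\in\Gamma_l}\E^{\omega}_{\mathbf{j}},
\]
which will allow us to invoke the single-site bounds of Lemma \ref{lem5.2} coordinate by coordinate.

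Next, for $\mathbf{i}\in\Gamma_k$ I would differentiate under the integral: $\omega_\mathbf{i}$ enters $\E^\omega_{\Gamma_l}f$ both through $f$ (because we are sitting at a coordinate outside $\Gamma_l$) and through the Hamiltonian, yielding
\[
\nabla_{\mathbf{i}}\bigl(\E^{\omega}_{\Gamma_l}f\bigr)\;=\;\E^{\omega}_{\Gamma_l}(\nabla_{\mathbf{i}}f)\;+\;J\,\mathrm{Cov}_{\E^{\omega}_{\Gamma_l}}\!\bigl(\nabla_{\mathbf{i}}H^{\omega}_{\Gamma_l},\,f\bigr).
\]
Because the range is $1$, $\nabla_{\mathbf{i}}H^{\omega}_{\Gamma_l}=\sum_{\mathbf{j}\sim\mathbf{i},\mathbf{j}\in\Gamma_l}\nabla_{\mathbf{i}}\psi_{\{\mathbf{i},\mathbf{j}\}}$, and each summand depends, under the product measure $\E^{\omega}_{\Gamma_l}$, only on the single coordinate $x_\mathbf{j}$. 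Using the product structure, the covariance splits as a sum of single-site covariances, and for each $\mathbf{j}\sim\mathbf{i}$ in $\Gamma_l$ one has
\[
\bigl|\mathrm{Cov}_{\E^{\omega}_{\mathbf{j}}}(\nabla_{\mathbf{i}}\psi_{\{\mathbf{i},\mathbf{j}\}},f)\bigr|\;\le\;\mathrm{osc}(\nabla_{\mathbf{i}}\psi_{\{\mathbf{i},\mathbf{j}\}})\cdot\E^{\omega}_{\mathbf{j}}|f-\E^{\omega}_{\mathbf{j}}f|\;\le\;2M\hat c_0\,\E^{\omega}_{\mathbf{j}}|\nabla_{\mathbf{j}}f|,
\]
by the uniform bound $\|\nabla_{\mathbf{i}}\psi_X\|_\infty\le M$ and the single-site Cheeger inequality \eqref{cheeger for single site}. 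Integrating the remaining coordinates of $\Gamma_l\setminus\{\mathbf{j}\}$ yields
\[
\bigl|\nabla_{\mathbf{i}}(\E^{\omega}_{\Gamma_l}f)\bigr|\;\le\;\E^{\omega}_{\Gamma_l}|\nabla_{\mathbf{i}}f|\;+\;2|J|M\hat c_0\sum_{\mathbf{j}\sim\mathbf{i},\,\mathbf{j}\in\Gamma_l}\E^{\omega}_{\Gamma_l}|\nabla_{\mathbf{j}}f|.
\]

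Summing over $\mathbf{i}\in\Gamma_k$ and swapping the order of summation (each $\mathbf{j}\in\Gamma_l$ has at most $2D$ neighbours in $\Gamma_k$) gives
\[
\bigl|\nabla_{\Gamma_k}(\E^{\omega}_{\Gamma_l}f)\bigr|\;\le\;\E^{\omega}_{\Gamma_l}|\nabla_{\Gamma_k}f|\;+\;4D|J|M\hat c_0\,\E^{\omega}_{\Gamma_l}|\nabla_{\Gamma_l}f|.
\]
Applying $\nu$, using the DLR consistency $\nu\E^{\cdot}_{\Gamma_l}=\nu$ (which holds for the product conditional expectation $\E^{\omega}_{\Gamma_l}$ because it is the consistent infinite-volume extension of the local specifications), and setting $\varepsilon\equiv 4D|J|M\hat c_0$ delivers the claimed inequality as soon as $|J|$ is chosen small enough that $\varepsilon<1$; this fixes $J_0$.

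The uniqueness of $\nu$ in the small-$J$ regime is a standard Dobrushin-type consequence that can be quoted. The main technical obstacle is the second step: one must be careful that the covariance estimate is \emph{local} in $\Gamma_l$ so that on the right-hand side one recovers $\nu|\nabla_{\Gamma_l}f|$ and not, say, $\nu|\nabla f|$ inflated by an $|\Gamma_l|=\infty$ factor; this is precisely where the product structure of $\E^{\omega}_{\Gamma_l}$ together with the locality of $\nabla_{\mathbf{i}}H^{\omega}_{\Gamma_l}$ is essential, and where the $R=1$ hypothesis simplifies bookkeeping. Extending the argument to general $R$ and $D$ requires only a finer partition of $\Z^D$ into classes all of whose pairs are at lattice distance greater than $R$.
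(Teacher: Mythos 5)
Your proof is correct and follows essentially the same route as the paper's: differentiate the conditional expectation to produce a covariance with $\nabla_{\bf i}H$, bound that covariance using $\|\nabla_{\bf i}\psi_X\|_\infty\le M$ together with the single-site Cheeger inequality of Lemma \ref{lem5.2} and the product structure of $\E_{\Gamma_l}$, then sum over sites and take $|J|$ small. The only cosmetic difference is that the paper first localises $\E_{\Gamma_0}$ to the finite neighbourhood $\E_{\{\sim{\bf i}\}}$ via the DLR equation and applies the tensorised Cheeger bound there, whereas you split the covariance into single-site covariances directly, which yields the same estimate (with a marginally better constant).
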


\begin{proof}
We suppose $k=1$ and $l=0$.  The case $k=0, l=1$ follows similarly.  We can write
\begin{align*}
\nu\left|\nabla_{\Gamma_1}\left(\E_{\Gamma_0}f\right)\right| &= \nu\left(\sum_{{\bf i}\in\Gamma_1}\left|\nabla_{\bf i}\left(\E_{\Gamma_0}f\right)\right|\right) \leq \nu\left(\sum_{{\bf i}\in\Gamma_1}\left|\nabla_{\bf i}\left(\E_{\{\sim{\bf i}\}}f\right)\right|\right)\\
&\leq \nu\sum_{{\bf i}\in\Gamma_1}\left|\nabla_{\bf i} f\right|  + |J|\nu\left(\sum_{{\bf i}\in\Gamma_1}\left|\E_{\{\sim{\bf i}\}}\left(f\left[\nabla_{\bf i}H_{\{\sim{\bf i}\}} - \E_{\{\sim{\bf i}\}}\nabla_{\bf i}H_{\{\sim{\bf i}\}}\right]\right)\right|\right)
\end{align*}
where we have used \eqref{DLR} and denoted $\{\sim{\bf i}\} = \{{\bf j}: {\bf j} \sim {\bf i}\}$.  Now set $\mathcal{W}_{\bf i} = W_{\bf i} - \E_{\{\sim{\bf i}\}}W_{\bf i}$, where $W_{\bf i} = \nabla_{\bf i}H_{\{\sim{\bf i}\}}^\omega$.  Then since $\E_{\{\sim{\bf i}\}}\mathcal{W}_{\bf i} =0$, we have that
\begin{equation}
\label{covariance}
\nu\left|\nabla_{\Gamma_1}\left(\E_{\Gamma_0}f\right)\right| \leq \nu\sum_{{\bf i}\in\Gamma_1}\left|\nabla_{\bf i} f\right|  + |J|\nu\left(\sum_{{\bf i}\in\Gamma_1}\left|\E_{\{\sim{\bf i}\}}\left(f-\E_{\{\sim{\bf i}\}}f\right)\mathcal{W}_{\bf i}\right|\right)
\end{equation}

Now, by our assumptions on the potential, we have $\|\mathcal{W}_{\bf i}\|_{\infty} \leq 8M$ for all ${\bf i}\in\Z^D$, so that
\begin{equation}
\label{bounded first}
\left|\E_{\{\sim{\bf i}\}}\left(f-\E_{\{\sim{\bf i}\}}f\right)\mathcal{W}_{\bf i}\right| \leq 8M\E_{\{\sim{\bf i}\}}\left|f-\E_{\{\sim{\bf i}\}}f\right|.
\end{equation}
Note that by construction, $\E_{\{\sim{\bf i}\}}$ is a product measure.  Now by Lemma \ref{single site lem} together with Lemma \ref{tensorisation} there exists a constant $\hat{c}_0$ such that
\begin{equation}
\label{cheeger for spec}
\E_{\{\sim{\bf i}\}}\left|f-\E_{\{\sim{\bf i}\}}f\right| \leq \hat{c}_0 \E_{\{\sim{\bf i}\}}|\nabla_{\{\sim {\bf i}\}} f|.
\end{equation}
Using \eqref{bounded first} and \eqref{cheeger for spec} in \eqref{covariance}, we then arrive at
\begin{align*}
\nu\left|\nabla_{\Gamma_1}\left(\E_{\Gamma_0}f\right)\right| &\leq \nu\sum_{{\bf i}\in\Gamma_1}\left|\nabla_{\bf i} f\right| + 8M\hat{c}_0|J|\nu\left(\sum_{{\bf i}\in\Gamma_1}|\nabla_{\{\sim {\bf i}\}} f|\right)\\
&= \nu\sum_{{\bf i}\in\Gamma_1}\left|\nabla_{\bf i} f\right| + 32M\hat{c}_0|J|\nu\left(\sum_{{\bf i}\in\Gamma_0}|\nabla_{\bf i} f|\right).
\end{align*}
Thus taking $J_0 = \frac{1}{32M\hat{c}_0}$ proves the lemma.
\end{proof} 

\begin{lemma}\label{lem5.4}
\label{convergence}
Under the conditions of Theorem \ref{beta LS for Gibbs thm}, there exists $J_0>0$ (given by Lemma \ref{gradient terms}) such that for $|J|<J_0$, $\mathcal{P}^rf$ converges almost everywhere to $\nu f$, where we recall that $\mathcal{P} = \E_{\Gamma_1}\E_{\Gamma_0}$.  In particular $\nu$ is unique.
\end{lemma}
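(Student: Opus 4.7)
The plan is to show that $\psi_r:=\mathcal{P}^r f$ is $\nu$-a.e.\ Cauchy with a geometrically small difference, and then identify the limit with the constant $\nu f$. Two structural facts drive the argument. First, since the potential has range $R=1$ and the sublattices $\Gamma_0,\Gamma_1$ consist of sites at pairwise lattice distance $\geq 2$, the interaction $H_{\Gamma_k}^{\omega}$ splits into a sum of single-site terms, so each conditional specification factorises as $\mathbb{E}_{\Gamma_k}^{\omega}=\bigotimes_{\mathbf{i}\in\Gamma_k}\mathbb{E}_{\mathbf{i}}^{\omega}$. Second, $\mathbb{E}_{\Gamma_k}g$ depends on $\omega$ only through $\omega_{\Gamma_k^c}$, so an easy induction shows that for $r\geq 1$ the function $\psi_r$ is $\sigma(\omega_{\Gamma_0})$-measurable; in particular $\nabla_{\Gamma_1}\psi_r\equiv 0$. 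Iterating Lemma \ref{gradient terms}, one application each for $\mathbb{E}_{\Gamma_0}$ and $\mathbb{E}_{\Gamma_1}$, gives the contraction $\nu|\nabla \mathcal{P}g|\leq \varepsilon^2\,\nu|\nabla g|$ whenever $\nabla_{\Gamma_1}g=0$; together with a single preliminary step starting from $f$, this yields the geometric decay
\[
\nu|\nabla\psi_r|\leq \varepsilon^{2r-1}\,\nu|\nabla f|\qquad(r\geq 1).
\]

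Next I would use Cheeger. By Lemma \ref{single site lem} each $\mathbb{E}_{\mathbf{i}}^\omega$ satisfies the single-site Cheeger bound with a uniform constant $\hat c_0$, and by standard tensorisation over the product $\mathbb{E}_{\Gamma_k}^\omega$ the corresponding bound $\mathbb{E}_{\Gamma_k}^\omega|g-\mathbb{E}_{\Gamma_k}^\omega g|\leq \hat c_0\,\mathbb{E}_{\Gamma_k}^\omega|\nabla_{\Gamma_k}g|$ holds uniformly in $\omega$. Setting $\phi_r:=\mathbb{E}_{\Gamma_0}\psi_r$, integrating against $\nu$ and using the (DLR) identity gives $\nu|\psi_r-\phi_r|\leq \hat c_0\,\nu|\nabla\psi_r|$, while a further application of Cheeger to $\phi_r$ combined with Lemma \ref{gradient terms} (with $k=1,\,l=0$) yields $\nu|\phi_r-\psi_{r+1}|\leq \hat c_0\varepsilon\,\nu|\nabla\psi_r|$. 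Adding these and inserting the decay of Step 1,
\[
\nu|\psi_{r+1}-\psi_r|\leq \hat c_0(1+\varepsilon)\,\varepsilon^{2r-1}\,\nu|\nabla f|,
\]
which is summable in $r$. By Tonelli the series $\sum_r|\psi_{r+1}(\omega)-\psi_r(\omega)|$ converges $\nu$-a.e., so $\psi_r\to\psi_\infty$ $\nu$-a.e.\ and in $L^1(\nu)$, and (passing to a subsequence if necessary) $\phi_{r_k}\to\psi_\infty$ $\nu$-a.e.

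It remains to identify $\psi_\infty$. Each $\psi_r$ for $r\geq 1$ is $\sigma(\omega_{\Gamma_0})$-measurable and each $\phi_r$ is $\sigma(\omega_{\Gamma_1})$-measurable, so $\psi_\infty$ is $\nu$-a.s.\ measurable with respect to both $\sigma$-algebras. Because $\nu$ is locally absolutely continuous with respect to the underlying product measure and has strictly positive conditional densities on every finite box, a function simultaneously $\nu$-a.s.\ representable as $\tilde\psi(\omega_{\Gamma_0})$ and as $\hat\psi(\omega_{\Gamma_1})$ must be $\nu$-a.s.\ constant; by (DLR) and dominated convergence this constant equals $\lim_r\nu\psi_r=\nu f$. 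Uniqueness follows by running the same argument with any Gibbs measure $\nu'$: the $\omega$-wise sequence $\mathcal{P}^r f(\omega)$ is intrinsic to the local specification, and since it converges on a set of full $\nu'$-measure to $\nu' f$ and on a set of full $\nu$-measure to $\nu f$, all Gibbs expectations of every bounded measurable $f$ must coincide.

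The main obstacle is the final identification step: upgrading the two complementary $\nu$-a.s.\ measurability statements to genuine $\nu$-a.s.\ constancy of $\psi_\infty$ requires the local absolute continuity of $\nu$ with respect to the product reference measure (equivalently, triviality of the Gibbs measure on $\sigma(\omega_{\Gamma_0})\cap\sigma(\omega_{\Gamma_1})$). Once this is accepted, the gradient contraction and Cheeger-based Cauchy estimates are straightforward book-keeping enabled by the product structure of $\mathbb{E}_{\Gamma_k}^\omega$.
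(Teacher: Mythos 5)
The paper offers no proof of this lemma beyond a citation to Lemma 5.6 of \cite{I-P}, so your attempt must stand on its own. Its first two steps do: the observation that $\mathcal{P}^r f$ depends only on $\omega_{\Gamma_0}$ for $r\geq 1$, the resulting contraction $\nu|\nabla \mathcal{P}^r f|\leq \varepsilon^{2r-1}\nu|\nabla f|$ obtained by alternating applications of Lemma \ref{gradient terms}, and the Cheeger-based telescoping bound $\nu|\mathcal{P}^{r+1}f-\mathcal{P}^r f|\leq \hat c_0(1+\varepsilon)\varepsilon^{2r-1}\nu|\nabla f|$ are all correct (modulo the usual restriction to cylinder functions so that the tensorised Cheeger inequality for the infinite products $\E_{\Gamma_k}$ is meaningful), and they closely parallel the estimates \eqref{first term}--\eqref{second term} used later in the proof of Theorem \ref{beta LS for Gibbs thm}. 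This legitimately produces a $\nu$-a.e.\ and $L^1(\nu)$ limit $\psi_\infty$.

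The gap is exactly where you flag it, and it is not a technicality one can wave through. The claim that a function which is $\nu$-a.s.\ equal to a $\sigma(\omega_{\Gamma_0})$-measurable function and also $\nu$-a.s.\ equal to a $\sigma(\omega_{\Gamma_1})$-measurable function must be $\nu$-a.s.\ constant is false for a general Gibbs measure: if $\nu$ were a nontrivial mixture of two phases, the (tail-measurable) indicator of a phase admits both representations and is not constant. Local absolute continuity with respect to the product measure on finite boxes cannot rescue this, because the obstruction lives at infinity; and you may not assume extremality or tail triviality, since uniqueness of $\nu$ is supposed to be a \emph{conclusion} of the lemma, not a hypothesis. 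The closing uniqueness argument breaks for the same reason: two mutually singular Gibbs measures $\nu$ and $\nu'$ each supply a set of full measure (for themselves) on which $\mathcal{P}^r f$ converges to $\nu f$, respectively $\nu' f$, and these sets need not intersect, so no contradiction arises. The standard repair, which is the content of the cited Lemma 5.6 of \cite{I-P} (see also \cite{G-Z}), is to run the contraction of Lemma \ref{gradient terms} in sup-norm rather than under $\nu$: the uniform bounds $\|\psi_X\|_\infty,\|\nabla_{\bf i}\psi_X\|_\infty\leq M$ and the $\omega$-uniform single-site Cheeger constant of Lemma \ref{single site lem} give $\sum_{\bf i}\|\nabla_{\bf i}\mathcal{P}^r f\|_\infty\leq C\varepsilon^{2r}\sum_{\bf i}\|\nabla_{\bf i}f\|_\infty$ for cylinder $f$, whence $\mathcal{P}^r f$ converges to a genuine constant $c_f$ independent of the boundary configuration and of any choice of Gibbs measure; the DLR identity $\nu\mathcal{P}^r f=\nu f$ then forces $c_f=\nu f$ simultaneously for every Gibbs measure, which delivers both the a.e.\ convergence and the uniqueness in one stroke. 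Your $L^1(\nu)$ estimates are a useful complement, but they cannot by themselves identify the limit or yield uniqueness.
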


\begin{proof}
The proof is standard: see for example Lemma 5.6 of \cite{I-P}.
\end{proof}

\begin{proof}[Proof of Theorem \ref{beta LS for Gibbs thm}]
We may suppose $f\geq0$.  Using \eqref{DLR}, write

\begin{align*}
\nu(\Phi(f)) - \Phi( \nu f) &= \nu\E_{\Gamma_0}\left(\Phi(f)\right) - \nu\left(\Phi\left(\E_{\Gamma_0}f\right)\right) \\
& \quad + \nu\left(\Phi\left(\E_{\Gamma_0}f\right)\right) -  \Phi( \nu f) \\
&= \nu\left(\Ent_{\E_{\Gamma_0}}^\Phi(f)\right) + \nu\left( \Ent^\Phi_{\E_{\Gamma_1}}(\E_{\Gamma_0}f)\right)\\
& \quad + \nu\left(\Phi\left(\E_{\Gamma_1}\E_{\Gamma_0}f\right)\right) - \Phi( \nu f).
\end{align*}
Since probability measures $\E_{\Gamma_0}$ and $\E_{\Gamma_1}$ are product measures by construction, we have by Lemmas \ref{tensorisation} and \ref{single site lem} that they both satisfy $L_1\Phi$-entropy inequalities with constant $\hat{c}$.  Therefore, the above yields
\begin{align*}
\nu(\Phi(f)) - \Phi( \nu f) &\leq \hat{c}\nu|\nabla_{\Gamma_0} f| + \hat{c}\nu\left|\nabla_{\Gamma_1}\left(\E_{\Gamma_0}f\right)\right| \\
& \quad + \nu\left(\Phi(\mathcal{P}f)\right) - \Phi(\nu f).
\end{align*}
We can similarly write
\begin{align*}
\mu\left(\Phi(\mathcal{P}f)\right) &= \nu\left(\Ent_{\E_{\Gamma_0}}^\Phi(\mathcal{P}f)\right) + \nu\left( \Ent^\Phi_{\E_{\Gamma_1}}(\E_{\Gamma_0}\mathcal{P}f)\right)+ \nu\left(\Phi\left(\mathcal{P}^2f\right)\right)\\
& \leq \hat{c}\nu\left|\nabla_{\Gamma_0}\mathcal{P}f\right| + \hat{c}\nu\left|\nabla_{\Gamma_1}\left(\E_{\Gamma_0} f\right)\right| +  \nu\left(\Phi\left(\mathcal{P}^2f\right)\right).
\end{align*}
Repeating this process, after $r$ steps we see that
\begin{align}
\label{r steps}
\nu(\Phi(f)) - \Phi( \nu f) &\leq \hat{c}\sum_{k=0}^{r-1}\nu\left|\nabla_{\Gamma_0}\mathcal{P}^kf\right| + \hat{c}\sum_{k=0}^{r-1}\nu\left|\nabla_{\Gamma_1}\left(\E_{\Gamma_0}\mathcal{P}^kf\right)\right| \\
&\quad + \nu\left(\Phi\left(\mathcal{P}^rf\right)\right) - \Phi(\nu f).
\end{align}
We may control the first and second terms using Lemma \ref{gradient terms}.  Indeed
\begin{align}
\label{first term}
\nu\left|\nabla_{\Gamma_0}\mathcal{P}^kf\right| &\leq \varepsilon^2\nu\left|\nabla_{\Gamma_0} \mathcal{P}^{k-1}f\right|\nonumber\\
&\leq \varepsilon^{2k-1}\nu\left|\nabla_{\Gamma_1} \E_{\Gamma_0}f\right|\nonumber\\
&\leq \varepsilon^{2k-1}\nu\left|\nabla_{\Gamma_1} f\right| + \varepsilon^{2k}\nu\left|\nabla_{\Gamma_0}f\right|.
\end{align}
Similarly
\begin{align}
\label{second term}
\nu\left|\nabla_{\Gamma_1}\left(\E_{\Gamma_0}\mathcal{P}^kf\right)\right| \leq \varepsilon^{2k}\nu\left|\nabla_{\Gamma_1}f\right| + \varepsilon^{2k+1}\nu\left|\nabla_{\Gamma_0}f\right|.
\end{align}
Using \eqref{first term} and \eqref{second term} in \ref{r steps} yields
\begin{align*}
\nu(\Phi(f)) - \Phi( \nu f) &\leq \hat{c}\left(1+\varepsilon^{-1}\right)\left[\left(\sum_{k=0}^{r-1}\varepsilon^{2k}\right)\nu\left|\nabla_{\Gamma_1}f\right| +\left(\sum_{k=0}^{r-1}\varepsilon^{2k+1}\right)\nu\left|\nabla_{\Gamma_0}f\right|\right] \\
&\quad + \nu\left(\Phi\left(\mathcal{P}^rf\right)\right) - \Phi(\nu f).
\end{align*}
By Lemma \ref{convergence} we have that $\lim_{r\to\infty}\mathcal{P}^rf = \nu f$, $\nu$-almost surely.  Therefore taking the limit as $r\to\infty$ in the above (which exists since $\varepsilon\in(0,1)$) yields
\[
\nu(\Phi(f)) - \Phi( \nu f) \leq C\nu\left|\nabla_{\Z^D} f\right|
\]
where $C=\hat{c}\frac{1+\varepsilon^{-1}}{1-\varepsilon^2}$.
\end{proof}
\smallskip
\noindent Next, we consider $IFI_2$ for a family of examples. In particular we restrict ourselves to a situation when $\mathcal{M}$ is an H-type group
and assume that for ${\bf i} \in \Z^D$
\begin{equation}\label{i.1}
U_{\bf i}\equiv \sum_{k=0,..,p-1}\alpha_k d^{p-k}_{\bf i}  \equiv \sum_{k=0,..,p-1}\alpha_k d^{p-k}(\omega_{\bf i}) 
\end{equation}
with $d(\cdot)$ denoting the Carnot-Caratheodory distance from the unit element,
$p\geq 2$, where  $\alpha_0\in(0,\infty)$ and $\alpha_k\in\mathbb{R}$. As above we consider an interaction
\begin{equation}\label{i.2}
H_\Lambda^\omega(x_\Lambda) = \sum_{\Lambda\cap X\neq\emptyset}\psi_{X}\left(x_\Lambda\bullet_\Lambda\omega\right),
\end{equation}
which is assumed to be bounded with bounded (sub-) gradient and  for simplicity is of finite range,
as specified at the beginning of the current section. Moreover we are given a family of regular conditional expectations defined by \eqref{5.1}.  Combining the previous results with those of this section the previous we arrive at the following theorem.

\begin{theorem}\label{5.2} \label{betaLS+IFI_2for Gibbs} $\,$\\
Suppose $p\geq 2$. Then there exists $J_0>0$ such that for $|J|<J_0$ the unique Gibbs measure $\nu$ corresponding to the interaction
$(\ref{i.1})-(\ref{i.2})$ satisfies the following inequalities
\begin{itemize}
 \item[{\rm(i)}]
\begin{equation} \label{beta_LS for Gibbs.bis}
\mathbf{Ent}_\nu^\Phi(|f|) \leq C_1\nu\left(\sum_{{\bf i}\in\Z^D}|\nabla_{\bf i} f|\right),
\end{equation}
where $\Phi(x) = x\left(\log(1+x)\right)^\frac1q$, $\frac1q+\frac1p=1$, with some constant $C_1\in(0,\infty)$, for any $f$ for which the right-hand side is well defined;
\item[{\rm(ii)}] 
\begin{equation} \label{IFI_2.bis}
\mathcal{U}_2(\nu f) \leq  \nu \left( \mathcal{U}_2(f)^2 + C_2\sum_{{\bf i}\in\Z^D}|\nabla_{\bf i} f|^2 
\right)^\frac12
\end{equation}
where $\mathcal{U}_2$ is the Gaussian isoperimetric profile function (as defined in section \ref{Sec3.Isoperimetric Inequalities}), with some constant $C_2\in(0,\infty)$ for any function $0\leq f\leq 1$ for which the right hand side is well defined.
\end{itemize}

\end{theorem}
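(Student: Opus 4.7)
The plan is to reduce both statements to single-site information for the measures
$\mu_{\bf i}\propto e^{-U_{\bf i}}d\lambda$, and then propagate to $\nu$ through the conditional expectations $\E_{\bf i}^\omega$. Write $U_{\bf i}=\alpha_0 d^p+W$ with $W:=\sum_{k=1}^{p-1}\alpha_k d^{p-k}$, and set $\beta=1/q=1-1/p\in[1/2,1)$. For the leading part $\alpha_0 d^p$ on an H-type group, Proposition \ref{U bound prop} provides the $U$-bound
$\mu_p(|f|d^{p-1})\leq A\mu_p|\nabla_\G f|+B\mu_p|f|$, so that $\mu_p(|f|(|U|^\beta+|\nabla U|))\lesssim \mu_p(|f|d^{p-1})$ is also controlled, while the classical Sobolev inequality \eqref{class sob} and the Poincar\'e inequality in balls \eqref{Poincare in ball} for $\lambda$ relative to the horizontal gradient $\nabla_\G$ are the standard results of Varopoulos. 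The residual term $W$ satisfies $|\nabla_\G W|\lesssim d^{p-2}$ and $|W|^\beta\lesssim d^{(p-1)^2/p}$, both of which are dominated, modulo additive constants, by $d^{p-1}\asymp |U|^\beta+|\nabla U|$ at infinity; hence the hypotheses of Corollary \ref{perturbation1} are met for every $\delta>0$, and $\mu_{\bf i}$ satisfies the $U$-bound \eqref{U-bound} with constants uniform in ${\bf i}$.

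\medskip
\textbf{Part (i).} With the verified single-site $U$-bound in hand, together with the Sobolev inequality, the Poincar\'e inequality in balls and the coercivity condition $\{U_{\bf i}\leq L\}\subset B(r(L))$, Theorem \ref{5.1} applies verbatim (the interaction $JH^\omega$ is bounded with bounded horizontal gradient by assumption, so the single-site conditional measure $\E_{\bf i}^\omega$ inherits the same bounds via Corollary \ref{perturbation2}). For $|J|$ smaller than the threshold of Theorem \ref{5.1}, this produces \eqref{beta_LS for Gibbs.bis}.

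\medskip
\textbf{Part (ii).} Since $p\geq 2$ gives $q\in(1,2]$, Theorem \ref{thm4.6} supplies $IFI_2$ for each single-site measure $\mu_{\bf i}$; the bounded perturbation argument behind Corollary \ref{perturbation2}, applied to the Bobkov functional $\mathcal{U}_2^2+C_2|\nabla_\G f|^2$ rather than to $\Phi$-entropies, shows that each $\E_{\bf i}^\omega$ satisfies $IFI_2$ with a constant uniform in $({\bf i},\omega)$. Since the points of $\Gamma_0$ (resp.\ $\Gamma_1$) are pairwise non-neighbours, the conditional measure $\E_{\Gamma_k}^\omega$ factorises as a product of single-site measures, and the tensorisation of Bobkov's inequality produces
\[
\mathcal{U}_2(\E_{\Gamma_k}^\omega f)\leq \E_{\Gamma_k}^\omega\sqrt{\mathcal{U}_2^2(f)+C_2\sum_{{\bf i}\in\Gamma_k}|\nabla_{\bf i}f|^2},\qquad k=0,1.
\]
Following the scheme of the proof of Theorem \ref{5.1}, we use the DLR identity $\nu=\nu\mathcal{P}^r$ to write $\mathcal{U}_2(\nu f)=\mathcal{U}_2(\nu\mathcal{P}^rf)$, apply $IFI_2$ for $\nu$ with respect to $\mathcal{P}^rf$, expand each conditional step by the tensorised Bobkov inequality above, and control the emerging gradient contributions $\sum_{\bf i}|\nabla_{\bf i}\mathcal{P}^kf|^2$ by the $\ell^2$-analogue of Lemma \ref{gradient terms}. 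Passing to the limit $r\to\infty$ using Lemma \ref{convergence} eliminates the tail and leaves the tensorised bound \eqref{IFI_2.bis}.

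\medskip
\textbf{Main obstacle.} The technically delicate step is the iteration in Part (ii): unlike the entropy inequality, where linearity in $|\nabla f|$ turns the cascade into a geometric series directly summed via Lemma \ref{gradient terms}, the square-root in $IFI_2$ forces one to work with the pair $(\mathcal{U}_2(f),|\nabla f|)$ under $\sqrt{A^2+B^2}\leq A+B$, and to upgrade Lemma \ref{gradient terms} from an $L^1$-type contraction $\nu|\nabla_{\Gamma_k}\E_{\Gamma_l}f|\leq\nu|\nabla_{\Gamma_k}f|+\varepsilon\nu|\nabla_{\Gamma_1}f|$ to an $L^2$-type contraction for $\bigl(\sum_{\bf i}|\nabla_{\bf i}f|^2\bigr)^{1/2}$. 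Such an upgrade follows from Corollary \ref{phi-ent to cheeger}, the fact that (for $p\geq 2$) Theorem \ref{Phi-entropy to LSq} provides a spectral gap for $\E_{\bf i}^\omega$, and the usual covariance manipulation replacing the $L^1$-Cheeger bound by its $L^2$-Poincar\'e counterpart.
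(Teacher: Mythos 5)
Your route coincides with the paper's: verify the single-site $U$-bound for the polynomial potential $U_{\bf i}=\sum_k\alpha_k d^{p-k}$ (leading term via Proposition \ref{U bound prop}, lower-order terms via the perturbation Corollary \ref{perturbation1}), transfer it to the conditional measures $\E^\omega_{\bf i}$ as bounded perturbations, invoke Theorem \ref{summary} to get uniform single-site Cheeger, $L_1\Phi$-entropy and $IFI_2$ bounds, feed part (i) into Theorem \ref{5.1}, and run the sweeping-out iteration over $\Gamma_0,\Gamma_1$ for part (ii). The paper's own proof is essentially this in three sentences and defers the entire part (ii) cascade to \cite{Z}, so your elaboration of the iteration, and your honest identification of the $L^2$-upgrade of Lemma \ref{gradient terms} as the genuinely delicate step, is consistent with (and more informative than) what is written there.

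One local step should be repaired. You propose to obtain $IFI_2$ for each $\E^\omega_{\bf i}$ by running ``the bounded perturbation argument behind Corollary \ref{perturbation2}'' directly on the Bobkov functional. That argument hinges on the variational representation $\Ent^\Phi_\mu(f)=\inf_{t}\mu\left(\Phi(f)-\Phi'(t)(f-t)-\Phi(t)\right)$ with a pointwise nonnegative integrand; no such representation is available for $\mathcal{U}_2(\mu f)\le\mu\sqrt{\mathcal{U}_2^2(f)+C|\nabla f|^2}$, whose left-hand side moves with the measure and whose right-hand side is not an integral of a sign-definite quantity. The paper explicitly warns that perturbing $IFI_2$ directly ``is a difficult matter'' and instead perturbs at the level of the $U$-bound, then rederives $IFI_2$ through the chain $U$-bound $\Rightarrow$ $L_1\Phi$-entropy $\Rightarrow$ isoperimetry $\Rightarrow$ $IFI_2$ of Theorem \ref{summary}. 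Since you have already established the perturbed $U$-bound uniformly in $({\bf i},\omega)$ when proving part (i), the fix is immediate: apply Theorem \ref{summary} to $\E^\omega_{\bf i}$ rather than attempting to perturb Bobkov's inequality itself.
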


\begin{proof}
To begin we notice that the reference measure $d\mu$ satisfies a $U$-bound, and therefore the conditional
expectation, (as a perturbation of the reference measure by strictly bounded and strictly positive density), also satisfies the following inequality 
\begin{equation} \label{5.2.1}
\int_{\mathbb{H}} f \left|U\right|^{\frac1q}  d\E_{\bf i}\leq A
\int_{\mathbb{H}} |\nabla_{\bf i} f|  d\E_{\bf i} + B  \int_{\mathbb{H}}  f d\E_{\bf i}
\end{equation}
with some constants $A,B\in(0,\infty)$ independent of ${\bf i}$ and $\omega_{\bf j}$, where $\E_{\bf i}$
denotes the corresponding conditional expectations. Thus we can apply Theorem \ref{thm3.4}
to conclude that the $\E_{\bf i}$'s satisfy Cheeger's inequality, as well as $L_1 \Phi$-entropy and $IFI_2$ bounds with constants independent of ${\bf i}$ and $\omega_{\bf j}$'s.
With this bound the proof of (ii) follows via strategy developed in \cite{Z}.
\end{proof}

\begin{rem}
We remark that once the conditional measures satisfy $L_1 \Phi$-entropy or $IFI_2$ inequalities with constants independent of external conditions, one can show that the Gibbs measure also satisfies $IFI_2$ even when the interactions $H_{\bf i}$ contain an unbounded component, provided we have Cheeger's inequality and appropriate $U$-bounds .
In particular one obtains the following generalisation of the results of \cite{Z} where only the bounded interaction case was studied.

\begin{theorem}\label{5.3}
Suppose $\mathcal{M}\equiv{\mathbb R}$, $U$ is a semibounded polynomial of degree at least $2$ and let
\[
H_{\bf i}^\omega(x_{\bf i}) \equiv \varepsilon\sum_{\{{\bf i}\}\cap X\neq\emptyset}\psi_{X}\left(x_{\bf i}\bullet_{\bf i}\omega\right) +
\varepsilon\sum_{{\bf j}} G_{{\bf i}{\bf j}}  x_{\bf i}\omega_{\bf j}
\] 
with $\psi_{X}$ satisfying conditions of Theorem \ref{5.2}, $\sum_{{\bf j}} |G_{{\bf i}{\bf j}}|<\infty$
and $\varepsilon\in(0,\infty)$.
Then, if $\varepsilon\in(0,\infty)$ is sufficiently small, the corresponding Gibbs measure satisfies $IFI_2$.
\end{theorem}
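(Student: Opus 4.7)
The plan is to reduce Theorem \ref{5.3} to the scheme of Theorem \ref{5.2} by establishing that the single-site conditional expectations $\E_{\bf i}^\omega$ satisfy Cheeger, $L_1\Phi$-entropy and $IFI_2$ inequalities with constants that are $\omega$-dependent but integrable under $\nu$, and then running the alternating sublattice iteration of Theorem \ref{beta LS for Gibbs thm} with careful control of the extra unbounded terms.

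\medskip

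\noindent\textbf{Step 1 (single-site inequalities).} For each ${\bf i}$, the density of $\E_{\bf i}^\omega$ relative to $d\lambda$ is proportional to $\exp\bigl(-U(x_{\bf i}) -\varepsilon\sum_X\psi_X - \varepsilon x_{\bf i}\sum_{\bf j} G_{{\bf i}{\bf j}}\omega_{\bf j}\bigr)$. The bounded part $\varepsilon\sum_X\psi_X$ is a bounded perturbation with bounded derivative, so it is handled by Corollaries \ref{cor1.2} and \ref{cor1.5}. The linear term $W_{\bf i}^\omega(x_{\bf i}):=\varepsilon x_{\bf i}\alpha_{\bf i}(\omega)$, with $\alpha_{\bf i}(\omega):=\sum_{\bf j}G_{{\bf i}{\bf j}}\omega_{\bf j}$, satisfies $|\nabla W_{\bf i}^\omega|=\varepsilon|\alpha_{\bf i}(\omega)|$, constant in $x_{\bf i}$, while $|W_{\bf i}^\omega|^\beta=\varepsilon^\beta|x_{\bf i}|^\beta|\alpha_{\bf i}(\omega)|^\beta$. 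Since $U$ is a polynomial of degree $p\geq 2$, $|U|^\beta+|\nabla U|$ grows like $|x_{\bf i}|^{p-1}$, so the perturbation conditions in Corollary \ref{cor1.5} are met with any prescribed $\delta$, at the cost of letting $C(\delta)$, $a_0$, $a_1$ depend polynomially on $|\alpha_{\bf i}(\omega)|$. Combining Theorem \ref{thm3.4} with Corollary \ref{cor1.5} (in the form needed for (v)), we obtain constants $\hat c_0(\omega),\hat c(\omega),\hat C_2(\omega)$ such that for every ${\bf i}$ and every $\omega$,
\begin{equation*}
\E_{\bf i}^\omega|f-\E_{\bf i}^\omega f|\le \hat c_0(\omega)\,\E_{\bf i}^\omega|\nabla_{\bf i}f|,\qquad \mathbf{Ent}_{\E_{\bf i}^\omega}^\Phi(|f|)\le \hat c(\omega)\,\E_{\bf i}^\omega|\nabla_{\bf i}f|,
\end{equation*}
together with the corresponding pointwise $IFI_2$ form, where $\hat c_0(\omega),\hat c(\omega),\hat C_2(\omega)$ are bounded by polynomials in $|\alpha_{\bf i}(\omega)|$.

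\medskip

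\noindent\textbf{Step 2 (integrability of the $\omega$-dependent constants).} Because $U$ is a semibounded polynomial of degree $p\ge 2$, the reference measure $\mu$ satisfies $LS_q$ with $q\ge 2$ by Corollary \ref{cor4.2}, and by Corollary \ref{cor2.4} it has exponential moments of $|x|^p$. A perturbation argument (using Corollary \ref{cor1.5} at the level of finite volume specifications, with $\varepsilon$ small and $\sum_{\bf j}|G_{{\bf i}{\bf j}}|<\infty$) then yields uniform-in-volume exponential moment bounds for $|\alpha_{\bf i}(\omega)|$ under any accumulation point $\nu$ of the finite-volume Gibbs measures. In particular $\nu\bigl(\hat c(\omega)^r + \hat c_0(\omega)^r\bigr)<\infty$ for every $r\ge 1$, and the random constants are controlled on average with bounds that tend to the corresponding constants of $\mu$ as $\varepsilon\downarrow 0$.

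\medskip

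\noindent\textbf{Step 3 (iteration and smallness of $\varepsilon$).} Following the proof of Theorem \ref{beta LS for Gibbs thm}, I split $\Z^D$ into the two sublattices $\Gamma_0,\Gamma_1$ and iterate $\mathcal{P}=\E_{\Gamma_1}\E_{\Gamma_0}$. The two new features are: (a) within each sublattice the $\E_{\Gamma_k}$ are no longer product measures because of the linear couplings $G_{{\bf i}{\bf j}}$ connecting sites of the same sublattice; (b) the covariance terms produced by $\nabla_{\bf i}H$ in Lemma \ref{lem5.3} now contain the unbounded contribution $\varepsilon\sum_{\bf j}G_{{\bf i}{\bf j}}\omega_{\bf j}$. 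Issue (a) is handled by replacing the naive tensorisation by a Dobrushin-type bound: the single-site inequalities of Step 1, together with the smallness of the off-diagonal couplings $\varepsilon|G_{{\bf i}{\bf j}}|$, let one still tensorise up to an error proportional to $\varepsilon\sum_{\bf j}|G_{{\bf i}{\bf j}}|$. Issue (b) is handled by Cauchy--Schwarz/H\"older combined with the exponential moments of Step 2: the extra covariance term $\varepsilon\,\E_{\sim{\bf i}}\bigl((f-\E_{\sim{\bf i}}f)\alpha_{\bf i}(\omega)\bigr)$ is bounded via the Cheeger inequality with $\hat c_0(\omega)$, and averaged against $\nu$ using the $L^r(\nu)$ bounds on $\hat c_0(\omega)|\alpha_{\bf i}(\omega)|$. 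After choosing $\varepsilon$ small enough that both contractivity constants (from bounded and unbounded interactions) are below $1$, the scheme of Theorem \ref{beta LS for Gibbs thm} yields uniqueness of $\nu$ and the $L_1\Phi$-entropy bound \eqref{beta_LS for Gibbs.bis}. The $IFI_2$ inequality \eqref{IFI_2.bis} is then obtained by the same iteration, applied to the $IFI_2$ functional, exactly along the strategy of \cite{Z}: the monotonicity/subadditivity of $\mathcal{U}_2\sqrt{\,\cdot\,}$ under taking product conditional expectations reduces it to the single-site $IFI_2$ bounds from Step 1.

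\medskip

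\noindent\textbf{Main obstacle.} The hard part is Step 3, item (b): propagating the iteration through covariance terms that are intrinsically unbounded in $\omega$. The balance between the smallness of $\varepsilon$ and the growth of $\alpha_{\bf i}(\omega)$ has to be quantitative enough that, after integrating against $\nu$, the contraction factor is strictly less than $1$ uniformly in the volume. This is precisely where both $\varepsilon\ll 1$ and $\sum_{\bf j}|G_{{\bf i}{\bf j}}|<\infty$ are essential: together with the exponential integrability supplied by Step 2, they guarantee that the unbounded perturbation does not spoil the contractivity used to sum the geometric series in the proof of Theorem \ref{beta LS for Gibbs thm}.
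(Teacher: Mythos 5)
The paper itself offers no proof of Theorem \ref{5.3}: it is stated inside a remark, and the only indication of the intended argument is the sentence preceding it, namely that one should first show that the single--site conditional measures satisfy the Cheeger, $L_1\Phi$-entropy and $IFI_2$ inequalities \emph{with constants independent of the external conditions} $\omega$, and then run the iteration of Section 6 together with the strategy of \cite{Z}. Your proposal deviates from this at exactly the critical point. In your Step 1 you apply Corollary \ref{cor1.5} with perturbation constants $C(\delta),a_0,a_1$ growing polynomially in $|\alpha_{\bf i}(\omega)|=|\sum_{\bf j}G_{{\bf i}{\bf j}}\omega_{\bf j}|$, and therefore end up with single--site constants $\hat c_0(\omega),\hat c(\omega)$ that are unbounded over $\Omega$. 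The whole point of the hypothesis that $U$ is a semibounded polynomial of degree $p\geq 2$ is that $U(x)+\varepsilon\alpha x$ is again a semibounded polynomial with the same leading term, so that the $U$-bound (and hence, via Theorem \ref{summary}, Cheeger, the $L_1\Phi$-entropy inequality and $IFI_2$) holds for $e^{-U(x)-\varepsilon\alpha x}dx/Z_\alpha$ with constants \emph{uniform} in $\alpha\in\R$; this uniformity is the nontrivial single--site input (standard in the unbounded spin system literature) and you never establish it.

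This is not a cosmetic difference, because your Step 3 does not close with merely integrable random constants. In the iteration of Theorem \ref{beta LS for Gibbs thm} the contraction factor of Lemma \ref{gradient terms} is $32M\hat c_0|J|$, and the geometric series in \eqref{first term}--\eqref{second term} requires this factor to be strictly less than $1$ uniformly; if $\hat c_0=\hat c_0(\omega)$ is unbounded, the $r$-th iterate produces a product of $r$ random factors sitting inside nested conditional expectations, and bounds of the form $\nu(\hat c_0^r)<\infty$ for each fixed $r$ do not yield a summable bound uniform in $r$ (H\"older would demand moment norms of order growing with $r$, with growth slow enough to be beaten by $\varepsilon^r$ -- an estimate you neither state nor prove). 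Your own ``Main obstacle'' paragraph acknowledges this balance but does not resolve it. A secondary unaddressed point: since only $\sum_{\bf j}|G_{{\bf i}{\bf j}}|<\infty$ is assumed, the coupling has infinite range, so the two--sublattice decomposition $\Gamma_0,\Gamma_1$ of Section 6 (built for range--$1$ interactions) does not make $\E_{\Gamma_k}$ a product measure; your appeal to a ``Dobrushin-type bound'' to restore tensorisation is a genuine additional lemma, not a remark. The correct repair is to prove the single--site inequalities uniformly in the external field first (using the $U$-bound for the shifted polynomial $U(x)+\varepsilon\alpha x+\mathrm{bdd}$), after which the unbounded part of the interaction enters the iteration only through covariance terms controlled by the uniform Cheeger inequality and the $U$-bound, exactly as the paper's remark indicates.
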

 
\end{rem}

\begin{rem}
For cylinder functions dependent on $N$ coordinates, adapting the length of the gradient in part {\rm (i)} of Theorem \ref{betaLS+IFI_2for Gibbs}, we get
\begin{equation} \label{beta_LS for Gibbs.bisprime}
\mathbf{Ent}_\nu^\Phi(|f|) \leq C_1\sqrt{N}\nu\left(\sum_{{\bf i}_l\in\Z^D, l=1,..,N}|\nabla_{{\bf i}_l} f|^2\right)^\frac12.
\end{equation}
Now, choosing a Lipschitz approximation of a cylinder set $A_N$ (specified by conditions on coordinates $\omega_{{\bf i}_l}, l=1,..,N$), by Theorem \ref{thm3.1} we arrive at
 \begin{equation}\label{isoqin_q.1}
\mathcal{U}_q(\nu(A_N)) \le \tilde{c}\sqrt{N}\ \nu^+_2(A_N)
\end{equation}
with suitable constant $\tilde c\in(0,\infty)$ independent of $N$, and
with use of the subscript $2$ on the right-hand side to emphasise that we have here the surface measure with respect to the quadratic distance.
On the other hand using part {\rm (ii)} of Theorem \ref{betaLS+IFI_2for Gibbs}, we obtain
 \begin{equation}\label{isoqin_q.2}
\mathcal{U}_2(\nu(A_N)) \le \sqrt{C_2}\ \nu^+_2(A_N)
\end{equation}
Thus we obtain a potentially useful tool for optimisation of isoperimetric relations for finite dimensional marginals of the measure $\nu$.
\end{rem}

\section{Appendix }\label{Apx.I}
Suppose for  $d\mu \equiv e^{-U}d\lambda/Z$, with $U\geq \varepsilon$, for some $\varepsilon > 0$, and $Z$ a normalisation constant, we have
\[ \mu \left(f U^\beta\right) \leq C\mu |\nabla f| + D \mu f.
\]
In particular, for a Lipschitz cut-off function $0<\varepsilon\leq U_L\leq U$, for $f\equiv e^{\lambda U_L} U_L^\alpha$, with $\alpha, \beta>0,\ \alpha+\beta=1$, we have
\begin{align*}
\mu  \left(e^{\lambda U_L} U_L\right) = \mu   \left(e^{\lambda U_L} U_L^\alpha \cdot U_L^\beta\right)   &\leq C\mu |\nabla \left( e^{\lambda U_L} U_L^\alpha\right)| + D \mu \left( e^{\lambda U_L} U_L^\alpha\right) \\
&\leq 
\lambda C\mu \left( e^{\lambda U_L} U_L^\alpha\cdot |\nabla U_L|\right)
+ \alpha C\mu \left( e^{\lambda U_L} U_L^{ \alpha-1}\cdot |\nabla U_L|\right) \\ 
&+  D \mu \left( e^{\lambda U_L} U_L^\alpha\right).
\end{align*}
If we assume that
\[
|\nabla U_L| \leq a U_L^\beta 
\]
with $a \in(0,\infty)$ independent of $L$, then we get
\begin{align*}
 \mu \left( e^{\lambda U_L} U_L\right)   
&\leq 
\lambda C\mu \left( e^{\lambda U_L} U_L^\alpha\cdot (a U_L^\beta)\right) \\
&\qquad
+ \alpha C\mu \left( e^{\lambda U_L} U_L^{ \alpha-1}\cdot (a U_L^\beta )\right) +  D \mu \left( e^{\lambda U_L} U_L^\alpha\right)\\
&\leq \lambda a C\mu \left( e^{\lambda U_L} U_L \right)  
+ \alpha a C\mu \left( e^{\lambda U_L} \right) 
+ D\mu \left( e^{\lambda U_L} U_L^\alpha\right).
\end{align*}
Using our assumption that $U_L\geq\varepsilon>0$ and a bound
\[ U_L^\alpha \leq \lambda\delta U_L + A(\lambda\delta)
\]
with some $\delta, A(\lambda\delta)\in(0,\infty)$ independent of $L$, we get
\begin{align*}
 \mu  \left(e^{\lambda U_L} U_L\right)
&\leq 
 \lambda( a C+ D\delta )\mu \left( e^{\lambda U_L} U_L \right)   
+ 
\left(\alpha a C   +    D\cdot A(\lambda\delta)\right) \mu \left( e^{\lambda U_L} \right).
\end{align*}
Hence for $\lambda\in(0,\lambda_0 )$, with $\lambda_0\equiv( a C+ D\delta )^{-1}$, we have
\[ \frac{d}{d\lambda}\mu  \left(e^{\lambda U_L}\right) = \mu  \left(e^{\lambda U_L} U_L\right)
 \leq 
 B\mu \left( e^{\lambda U_L} \right)
\]
with 
$$B\equiv B(\lambda_0, \delta)\equiv \left(\alpha a C   +    D\cdot A(\lambda\delta)\right)\left(1-\lambda_0( a C+ D\delta )\right)^{-1}.$$
Solving this differential inequality for $\lambda\in(0,\lambda_0 )$, we obtain
\[\mu  \left(e^{\lambda U_L}\right)\leq  e^{\lambda B}.
\]
Since the constant $B$ is independent of $L$, by the dominated convergence theorem 
we obtain the following bound
\[\mu  \left(e^{\lambda U }\right)\leq  e^{\lambda B}
\]
true for $\lambda\in(0,\lambda_0 )$.\\

\bibliographystyle{siam}

\end{document}